\newtheorem{theorem}{Theorem}[section]
\newtheorem{corollary}[theorem]{Corollary}
\newtheorem{lemma}[theorem]{Lemma}
\newtheorem{proposition}[theorem]{Proposition}
\newtheorem{open}{Open problem}
\theoremstyle{definition}
\newtheorem{definition}[theorem]{Definition}
\newtheorem{remark}{Remark}
\DeclareMathOperator{\dist}{dist}
\DeclareMathOperator{\dv}{div}
\DeclareMathOperator\supp{supp}
\DeclareMathOperator\Tr{Tr}
\DeclareMathOperator\tr{tr} 
\def\cprime{$'$}
\def\N{\mathbb{N}}
\def\Z{\mathbb{Z}}
\def\R{\mathbb{R}}
\let\O=\Omega
\let\e=\varepsilon
\let\vp=\varphi
\let\t=\tilde
\let\ol=\overline
\let\ul=\underline
\let\mc=\mathcal
\def\notimplies{\mathrel{{\ooalign{\hidewidth$\not\phantom{"}$\hidewidth\cr$\implies$}}}}
\def\1{\mathbbm{1}}
\newcommand{\su}[2]{\genfrac{}{}{0pt}{}{#1}{#2}}
\newcommand*{\xRightarrow}[1][]{%
  \ext@arrow 0359\Rightarrowfill@{\quad\!\;}{#1}%
}
\def\thm#1{Theorem~\ref{thm:#1}}
\def\seq#1{(#1_n)_{n\in\N}}
\def\limt{\lim_{t\to+\infty}}
\def\as{\quad\text{as }\;}
\def\inn{\quad\text{in }\;}
\def\on{\quad\text{on }\;}
\def\for{\quad\text{for }\;}
\def\fall{\quad\text{for all }\;}
\def\step#1{\smallskip{\em Step #1.\,}}
\newenvironment{formula}[1]{\begin{equation}\label{#1}}
                       {\end{equation}\noindent}
\def\Fi#1{\begin{formula}{#1}}
\def\Ff{\end{formula}\noindent}
\def\pe{principal eigenvalue}
\def\pf{principal eigenfunction}
\def\MP{maximum principle}
\def\SMP{strong maximum principle}
\def\L{\mathcal{L}}
\def\l{\lambda}
\def\lB{\lambda_{\mc{B}}}
\def\LB{\Lambda_{\mc{B}}}
\def\B{\mc{B}}
\def\IP{\textrm{\bf{IP}}}
\def\mean#1{\left\langle#1\right\rangle}
\def\lm#1{\left\lfloor{#1}\right\rfloor}
\def\um#1{\left\lceil{#1}\right\rceil}
\title[Stability analysis in general unbounded domains]
{\bf Stability analysis for semilinear parabolic problems in general unbounded domains}
\author{Luca  Rossi}
\address{CNRS, Ecole des Hautes Etudes en Sciences Sociales,   
Centre d'Analyse et Math\'ematiques Sociales, 54, boulevard Raspail, 
75006 Paris, France}
\email{luca.rossi@ehess.fr}
\begin{document}

\maketitle

\vspace{-2pt}

\begin{abstract}
	
We introduce several notions of generalised \pe\ for
a linear elliptic operator on a general unbounded domain, under boundary condition
of the oblique derivative type.
We employ these notions in the stability analysis of semilinear problems.
Some of the properties we derive are new even in the Dirichlet or in the whole space cases. 
As an application, we show the validity of the hair-trigger effect for 
the Fisher-KPP equation on general, uniformly smooth~domains.
%
\end{abstract}



\section{Introduction}

The prototype of reaction-diffusion equations is
\begin{equation}\label{RD}
\partial_{t} u=\Delta u+f(u), \quad  t>0,  \ x\in\mathbb{R}^{d}.
\end{equation}
It has been introduced independently by Fisher \cite{Fisher} and
Kolmogorov, Petrovski and Piskunov \cite{KPP} for modeling the propagation 
of a gene inside a population. It has then been employed in several contexts, ranging from
population dynamics, epidemiology, combustion theory to social sciences.
The question at stake is whether a quantity subject to diffusion and self-reinforcement
will eventually invade the whole environment and at which speed.
Invasion means that the solution $u$ becomes uniformly bounded from below away from zero 
as $t\to+\infty$
(a precise definition will be given in the sequel).
The result of~\cite{KPP}, completed in~\cite{AW}, is that invasion occurs for any nonnegative, nontrivial initial datum,
with a positive asymptotic speed, at least as soon as $f$ satisfies $f'(0)>f(0)=0$. 
This property is referred to as the ``hair-trigger" effect.
There has been a huge amount of improvements and extensions of this result
both for modelling and for purely theoretical purposes. Just to mention very few of them:
more general reaction terms $f$ have been considered in~\cite{AW} 
--where the condition $f'(0)>0$ is relaxed by $\liminf_{s\to0^+}s^{-p}f(s)>0$, 
with $p=1+2/d$ being the Fujita exponent--
spatial periodic heterogeneity
in~\cite{W02,BHR1,BHNperiodic,Holes}, random stationary ergodic coefficients in~\cite{NX09,FG}, 
general spatial heterogeneity in~\cite{BHRossi,BHNgeneral}. 

Equation~\eqref{RD} is used to model the dynamics of a population under the 
assumption that 
the habitat is homogeneous and coincides with the whole plane.
If instead the environment presents some obstacles (such as bodies of water,
walls, mountains,\,...) then one is led to consider the Neumann problem
\Fi{RDN}
\begin{cases}
	\partial_t u=\Delta u+f(u), & \quad t>0,\ x\in\O\\
	\nu\.\nabla u=0, & \quad t>0,\ x\in\partial\O,
\end{cases}
\Ff
where $\nu$ is the normal field to $\O$. The Neumann boundary condition is very natural:
it is a no-flux condition which entails the conservation of mass 
(total population) in the absence of the reaction term $f$.
There is a vast literature on this problem when the domain $\O$ is either bounded or it
has a specific shape (cylinder, periodic domain, exterior domain,\,...).
To the best of our knowledge, the only result for problem~\eqref{RDN} 
set on a generic unbounded domain $\O$ 
is \cite[Theorem~1.7]{BHNgeneral}\footnote{
\;\cite[Theorem~1.7]{BHNgeneral} contains some additional hypotheses,
but they are only required to ensure that~$\O$ is ``strongly unbounded'' in any direction,
a property used to define the asymptotic speed of~spreading.}
by Berestycki, Hamel and Nadirashvili.
%
%
\begin{theorem}[\cite{BHNgeneral}]\label{thm:BHNgeneral}
Assume that $f'(0)>f(0)=0$ and that $\O$ is an unbounded domain such that $\partial\O$ is
uniformly of class $C^{2,\alpha}$ and the following conditions hold:
\begin{enumerate}[\quad $a)$]
	\item the geodesic distance in $\ol\O$, denoted by $\delta_\O$, satisfies 
	$$\forall r>0,\quad
	\sup_{\su{x,y\in\O}{|x-y|<r}}\delta_\O(x,y)<+\infty;$$
	\vspace{-10pt}
	\item $\displaystyle\lim_{r\to+\infty}\frac{|\O\cap B_{r+1}(y)|}{|\O\cap B_r(y)|}=1$,
	uniformly with respect to $y\in\O$.
\end{enumerate}
Then any solution to~\eqref{RDN} with an initial condition $u_0\geq0,\not\equiv0$
satisfies
$$\inf_{x\in\O}\Big(\liminf_{t\to+\infty} u(t,x)\Big)>0.$$
\end{theorem}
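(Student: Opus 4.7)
My plan is to combine a uniform-in-$x$ construction of compactly supported stationary subsolutions with a parabolic comparison argument. First I would apply the strong maximum principle and Hopf's boundary lemma --- available thanks to the uniform $C^{2,\alpha}$-regularity of $\partial\Omega$ --- to obtain $u(1,x)>0$ for every $x\in\ol\Omega$; an ODE barrier $\dot y=f(y)$ then yields a uniform $L^\infty$-upper bound, and, after truncating $f$ past its first positive zero, I may assume $f\geq 0$ with $f(s)\geq\tfrac12 f'(0)\,s$ for small $s\geq0$. The problem thus reduces to producing a constant $\eta>0$, independent of $x_0\in\Omega$, such that $\liminf_{t\to+\infty}u(t,x_0)\geq\eta$.

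\medskip

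The heart of the proof is the construction, at every base point $x_0\in\Omega$, of a compactly supported stationary subsolution $\underline v_{x_0}\geq 0,\not\equiv 0$ of $\Delta v+f(v)=0$ satisfying the Neumann condition on the relevant portion of $\partial\Omega$ and bounded below by a positive constant independent of $x_0$ on a neighborhood of $x_0$. The natural candidate is a small multiple of the principal Neumann eigenfunction on the geodesic ball $B_\rho^\Omega(x_0):=\{y\in\ol\Omega:\delta_\Omega(x_0,y)<\rho\}$, with $\rho$ so large that the associated principal eigenvalue lies strictly below $f'(0)$. Assumption~(a) is the key geometric ingredient here: it forces $B_\rho^\Omega(x_0)$ to sit inside a Euclidean ball of a radius $R(\rho)$ independent of $x_0$, which, combined with uniform $C^{2,\alpha}$-regularity of $\partial\Omega$ and standard interior Harnack estimates, yields a lower bound on the eigenfunction near $x_0$ that is uniform in $x_0\in\Omega$. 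To promote the non-uniform positivity of $u(1,\cdot)$ to a bound $u(T,\cdot)\geq\eta\,\underline v_{x_0}$ on the support of $\underline v_{x_0}$, I would then iterate the parabolic Harnack inequality along a chain of balls joining $x_0$ to a fixed reference point, (a) again ensuring that the number of links stays bounded. Finally, the sub- and super-solution method applied to \eqref{RDN} with datum $\eta\,\underline v_{x_0}$ produces a time-monotone orbit converging to a stationary supersolution dominating $\underline v_{x_0}$, and the parabolic comparison principle gives $\liminf_{t\to+\infty}u(t,x_0)\geq\eta$.

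\medskip

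The main obstacle is precisely this \emph{uniformity in $x_0$} of all the constants entering the argument: without assumption~(a), geodesic balls at $x_0$ could have arbitrarily large Euclidean diameter, the Harnack chain could become arbitrarily long, and the bound $\eta$ could degenerate to zero as $x_0$ escapes to infinity, leaving only a pointwise positive liminf of $u$. Condition~(b) does not appear to be needed for this particular conclusion; as the footnote clarifies, it enters only for the companion statement on the asymptotic speed of spreading. An alternative route, more in the spirit of the stability framework announced in the abstract, is to show that the generalized principal eigenvalue of $-\Delta-f'(0)$ on $\Omega$ with Neumann condition is strictly negative, and then deploy an abstract principle whereby linear instability of $0$ implies uniform nonlinear instability; the geometric role of~(a) is the same in either viewpoint.
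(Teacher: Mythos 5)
As a preliminary remark, Theorem \ref{thm:BHNgeneral} is cited from \cite{BHNgeneral} and is not reproved in this paper; the paper's own contribution here is Corollary~\ref{cor:h-t}, which drops \emph{both} hypotheses $(a)$ and $(b)$ (retaining only uniform $C^2$ regularity of $\partial\Omega$), by reducing to the generalised principal eigenvalue framework of Theorems~\ref{thm:LambdaB} and~\ref{thm:l<<0}.

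The genuine gap in your proposal is the step ``with $\rho$ so large that the associated principal eigenvalue lies strictly below $f'(0)$,'' which is asserted but not justified, let alone uniformly in $x_0$. For the mixed Neumann/Dirichlet problem on a geodesic ball $B_\rho^\Omega(x_0)$, the Dirichlet portion $\partial B_\rho^\Omega(x_0)\cap\Omega$ of the boundary can stay ``large'' relative to the enclosed volume if the domain pinches, and then the eigenvalue need not fall below zero as $\rho\to\infty$. This is precisely the role of condition $(b)$: it controls $|\Omega\cap B_{r+1}(y)|/|\Omega\cap B_r(y)|$, which is what makes the Rayleigh quotient of a simple cutoff test function small --- this is in fact how \cite{BHNgeneral} proceed. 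Your dismissal of $(b)$ misreads the footnote: the ``additional hypotheses'' mentioned there are those in \cite[Theorem~1.7]{BHNgeneral} \emph{beyond} $(a)$ and $(b)$, used only to define the asymptotic spreading speed; condition $(b)$ itself enters the invasion argument. (Your instinct that $(b)$ is dispensable is in fact vindicated by Corollary~\ref{cor:h-t}, but only after it is replaced by the uniform volume-growth estimates of Lemmas~\ref{lem:geo} and~\ref{lem:C}, which your argument does not supply.) Condition $(a)$ cannot substitute for this: the inclusion $B_\rho^\Omega(x_0)\subset B_\rho(x_0)$ you invoke is trivial since $\delta_\Omega\geq|\cdot|$, and $(a)$ gives the reverse inclusion, a connectivity statement rather than a spectral bound. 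A secondary issue is your Harnack chain ``joining $x_0$ to a fixed reference point,'' whose length grows like $|x_0|$ and whose constant therefore degrades exponentially; what is actually needed is a chain of uniformly bounded length entirely contained in a local set $\Omega_R(x_0)$, as in Steps~2--3 of the proof of Theorem~\ref{thm:LambdaB}, together with a time-shift trick that absorbs the non-uniform positivity of $u(1,\cdot)$.
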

This result ensures that invasion always occurs if the habitat 
fulfils the geometric hypotheses $a,b)$.
What happens if one drops such hypotheses stands as an open question. 
One of the goals of this paper is to answer it. 
Examples violating $a)$ are: spiral domains, the complement of a half-hyperplane; 
counter-examples to~$b)$ are less standard, 
but still not so hard to construct,
see Remark~\ref{rem:BHNgeneral} in Section~\ref{sec:s-a}. 

The result of \thm{BHNgeneral} is related to an instability
feature of the null state. To investigate such phenomenon, 
we make use of the notion of {\em generalised \pe}, that we apply
to the linearised problem around $0$. 

The theory of the generalised \pe\ in an unbounded
domain under Neumann boundary condition is less developed than the corresponding one for
the Dirichlet condition, 
despite the former is very natural to consider in modelling, as mentioned before.
One of the reasons behind this fact resides in some technical difficulties in the approximation
of the generalised \pe\ through classical \pe s, 
that will be explained in Section~\ref{intro:lB}.
In addition to the properties needed to improve \thm{BHNgeneral}, we investigate some 
further features of the generalised \pe\ 
for an arbitrary linear elliptic operator in non-divergence form,
with the aim of filling 
the gap between the known results about the Dirichlet
and the general oblique derivative conditions, which include Neumann as a particular case.
These features are:
approximation of the generalised \pe\ through 
truncated domains, existence of positive eigenfunctions, connections with the stability analysis.
In the work~\cite{BR4} in collaboration with Berestycki, we have shown that in the Dirichlet case
a unique notion of \pe\ does not suffice to characterise all these properties.
As we will see, the same occurs in the oblique derivative case. 
The relations between the different notions of generalised \pe\ introduced are then discussed.



\section{Main results}\label{sec:main}

We consider the linear operator 
$$\L w:=\Tr(A(x)D^2 w)+b(x)\.\nabla w+c(x)w,\quad x\in\O.$$
The matrix field $A$ is assumed to be symmetric and elliptic, with the
smallest ellipticity constant $\ul A(x)$ being positive (not necessarily uniformly)
for every $x\in\ol\O$.
The basic regularity assumptions are $A,b,c\in C^{0,\alpha}_{loc}(\ol\O)$ for some $\alpha\in(0,1)$.

The regularity of the boundary
of the domain~$\O\subset\R^d$ will play a crucial role in our results. 
We recall that $\partial\O$ is said to be
{\em locally of class}~$C^k$ if at each point $\xi\in\partial\O$
there correspond a coordinate system 
$(x', x_n)\in\R^{d-1}\times\R$, 
a constant $\rho>0$ and a function $\Psi : \R^{d-1}\to\R$ of class~$C^k$ 
such that 
$$ B_\rho(\xi) \cap\O = \{(x', x_n) \ :\ x_n > \Psi(x')\} \cap B_\rho(\xi).$$
If $\rho$ can be chosen independently of
$\xi\in\partial\O$ and $\|\Psi\|_{C^k}\leq C$ for some $C$ independent of 
$\xi$ too, then $\partial\O$ is said to be
{\em uniformly of class}~$C^{k}$. 

We assume throughout the paper that $\partial\O$ is  locally of class~$C^d$, or~$C^{2,\alpha}$
in the case~$d=2$.
This allows one to approximate $\O$ with a family of ``truncated'' Lipschitz domains,
hence to treat elliptic and parabolic problems in $\O$ as the limits of bounded domains.
The $C^d$ regularity is a technical condition
required to perform such an approximation through an application
of the Morse-Sard theorem to the distance function to $\partial\O$ (which inherits
the regularity of $\partial\O$, see~\cite[Lemma~14.16]{GT}).

As for the boundary condition on $\partial\O$, it is given by the oblique derivative operator
$$\B w:=\beta(x)\.\nabla w+\gamma(x) w,$$
with $\beta:\partial\O\to\R^d$ and $\gamma:\partial\O\to\R$ in $C^{1,\alpha}_{loc}$.
The vector field $\beta$ is assumed to satisfy
$$\beta\.\nu>0\quad\text{on }\partial\O,$$
where $\nu$ is the exterior normal field to $\O$.


\subsection{The ``standard'' generalised principal eigenvalue}\label{intro:lB}

The ``standard'' notion of generalised \pe\ is
$$\lB:=
\sup\{\lambda\ :\ \exists\phi>0,\ (\L+\lambda)\phi\leq0 \text{ in }
\O,\ \B \phi\geq0\text{ on }\partial \O\}.
$$
The ``test functions'' $\phi$ are assumed to belong to $C^{2,\alpha}_{loc}(\ol\O)$.
In the Dirichlet case we have $\B\phi=\phi$, therefore
 the condition $\B\phi\geq0$ is automatically fulfilled.
Indeed, in such a case, the above definition is exactly the same as the one introduced
by Berestycki, Nirenberg and Varadhan in~\cite{BNV}
to deal with bounded, non-smooth domains $\O$ 
(earlier equivalent definitions are due to Agmon~\cite{A1}
in the case of operators in divergence form 
and, for general operators, to Nussbaum and Pinchover 
\cite{NP}, building on some ideas of Protter and Weinberger 
\cite{Max2}).
The definition of~$\lB$ for an oblique derivative operator~$\B$ with~$\gamma\geq0$
can be found in~\cite{P-S02}.\footnote{\;The definition given in~\cite{P-S02}
 is actually slightly different, but it is shown to be 
equivalent to the above one.} We also refer to the work of Patrizi~\cite{Patri08}
where the same notion is applied to a class of homogeneous, fully nonlinear operators in
bounded, smooth domains.


The key property of $\lB$ that we derive is that it can be approached by ``classical'' \pe s in
truncated domains.
Namely, for a given point~$y\in\O$,
we let $\O_r(y)$ denote the connected component of $\O\cap B_r(y)$ 
containing $y$, and we consider the eigenvalue problem
\Fi{ep-mixed}
\begin{cases}
	-\L\vp=\lambda\vp & \text{ in }\O_r(y)\\
	\B \vp=0 & \text{ on } (\partial \O_r(y))\cap B_r(y)\\
	\vp=0 & \text{ on }\partial \O_r(y)\cap \partial B_r(y).
\end{cases}
\Ff
This is a mixed boundary value problem in a domain that
can be very irregular at the intersection between the boundaries 
of $\O$ and $B_r(y)$. 
This difficulty is bypassed in the Dirichlet case by approximating 
the truncated domains from inside 
by a family of smooth domains, then exploiting in a crucial way the monotonicity of the 
\pe\ with respect to the inclusion of domains. Such monotonicity fails in the Neumann case,
hence here we need to directly work on the irregular domain, facing the related technical 
difficulties.
We overcome it by means of a result obtained in collaboration with Ducasse~\cite{Holes}
which ensures that $\O\cap B_r(y)$ is a Lipschitz open set for a.e.~$r>0$,
allowing us to apply the solvability theory of Lieberman~\cite{Lie86}.

\begin{theorem}\label{thm:lrinfty}
Let $y\in\O$. For a.e.~$r>0$, the eigenvalue problem~\eqref{ep-mixed} admits a unique  
eigenvalue $\lambda(y,r)$ with an eigenfunction $\vp$ which is positive in $\ol\O_r\cap B_r$.

Moreover, $\lambda(y,r)$ is 
strictly decreasing with respect to $r$ and satisfies%
\footnote{\;Here and in the sequel, we extend $r\mapsto\lambda(y,r)$
	to a semicontinuous (decreasing) function, so that the limit at infinity makes sense.}
$$\lim_{r\to\infty}\lambda(y,r)=\lB.$$
\end{theorem}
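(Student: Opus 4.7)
The plan is to establish the existence and uniqueness of $\lambda(y,r)$ on the Lipschitz truncated domains via Krein-Rutman theory, and to derive both the strict monotonicity in~$r$ and the identification of the limit with~$\lB$ through essentially one and the same comparison argument, rooted in the strong maximum principle and a Hopf-type boundary lemma for the oblique operator~$\B$. The obstruction to the first step is that $\O_r(y)$ can fail to be smooth where $\partial\O$ meets $\partial B_r(y)$; one invokes the Ducasse-Rossi result (cited in the introduction) guaranteeing that $\O\cap B_r(y)$ is a Lipschitz open set for a.e.~$r>0$. On such a domain, Lieberman's mixed-boundary solvability theory turns $(c_0-\L)^{-1}$, with the prescribed oblique/Dirichlet data, into a compact, strongly positive operator for $c_0$ large enough, and Krein-Rutman yields a simple eigenvalue $\lambda(y,r)$ with a positive eigenfunction, unique among eigenvalues admitting a nonnegative one; the Hopf lemma upgrades positivity up to the oblique part of $\partial\O_r(y)$.

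\emph{Monotonicity and $\lB\le\lambda(y,r)$.} Fix admissible $r_1<r_2$ with eigenfunctions $\vp_1,\vp_2$. The crucial point is that $\vp_2$ is strictly positive on the Dirichlet portion $\partial B_{r_1}(y)\cap\ol\O$ of $\partial\O_{r_1}(y)$, because that set lies in the interior of $\O_{r_2}(y)$. Define
\[
\kappa:=\inf\{t>0 \,:\, t\vp_2\ge\vp_1 \text{ in } \ol\O_{r_1}(y)\}, \qquad w:=\kappa\vp_2-\vp_1.
\]
Then $w\ge0$ with $w(x_0)=0$ at some $x_0\in\ol\O_{r_1}$, and the strict positivity of $\vp_2$ where $\vp_1=0$ excludes $x_0$ from the Dirichlet part. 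Were $\lambda(y,r_1)\le\lambda(y,r_2)$, one would have $(\L+\lambda(y,r_1))w\le 0$ together with $\B w=0$ on the oblique boundary, contradicting either the strong maximum principle (interior touching) or the Hopf lemma (boundary touching). Running this same argument with $\vp_2$ replaced by any test function $\phi$ in the definition of $\lB$ gives $\lambda\le\lambda(y,r)$ for every such $\lambda$, and therefore $\lB\le\lambda(y,r)$.

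\emph{Convergence.} By the monotonicity just established, $\lambda_\infty:=\lim_{r\to\infty}\lambda(y,r)$ exists and satisfies $\lambda_\infty\ge\lB$. For the reverse inequality, normalise $\vp_r(y)=1$. Interior and boundary Harnack estimates (the latter up to the uniformly $C^{2,\alpha}$ part of $\partial\O$, where $\vp_r$ satisfies the homogeneous oblique condition) bound $\{\vp_r\}$ uniformly on every compact subset of~$\ol\O$ for all sufficiently large~$r$. Lieberman-type Schauder estimates then provide compactness in $C^{2,\alpha}_{loc}(\ol\O)$; along a subsequence one obtains a limit $\vp_\infty\ge0$ with $\vp_\infty(y)=1$, solving $(\L+\lambda_\infty)\vp_\infty=0$ in $\O$ and $\B\vp_\infty=0$ on $\partial\O$. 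The strong maximum principle upgrades $\vp_\infty$ to strictly positive, so $\vp_\infty$ is admissible in the sup defining $\lB$ and hence $\lambda_\infty\le\lB$.

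The main obstacle, highlighted in Section~\ref{intro:lB}, is precisely the first step: under an oblique condition one cannot approximate $\O_r(y)$ by smooth subdomains and exploit monotonicity of the classical principal eigenvalue --- the standard trick in the Dirichlet case --- which forces a direct treatment of the mixed eigenvalue problem on the non-smooth truncated domain via the Lipschitz-for-a.e.-$r$ property from~\cite{Holes} and Lieberman's solvability theory.
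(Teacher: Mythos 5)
Your monotonicity and convergence arguments coincide in substance with the paper's: there the comparison is packaged in Lemma~\ref{lem:monotone} (whereas you run the $\kappa$-infimum argument inline), and the passage to the limit uses the boundary Harnack inequality for the oblique derivative problem together with the local estimates of \cite[Lemma~1]{Lie86}, exactly as you describe. The existence step, however, contains a genuine gap. You claim that Lieberman's mixed-boundary solvability theory makes $(c_0-\L)^{-1}$ compact and strongly positive on the Lipschitz truncated domain, so that Krein--Rutman can be applied. The paper explicitly considers and rejects this route. The theory of~\cite{Lie86} gives solutions in $C^{2,\alpha}_{loc}(\O_r)\cap C^0(\ol\O_r)$, but compactness of the resolvent on $C^0(\ol\O_r)$ requires a \emph{global} H\"older (or at least uniform modulus-of-continuity) estimate up to the wedge $\partial\O\cap\partial B_r$; in the mixed Lipschitz setting these are available only under the additional geometric hypotheses of~\cite{Lie89}. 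Strong positivity is also problematic: solutions vanish on the Dirichlet portion and hence are never interior to the positive cone of $C^0$, which at best forces a weak form of Krein--Rutman (\cite{Nus81,Bir95}) --- but that still requires the compactness you have not established.

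The paper sidesteps this by a direct construction (the method of~\cite{BD06}): define $\lambda(y,r)$ by the sup formula, take $\lambda_n\nearrow\lambda(y,r)$, solve $-(\L+\lambda_n)u_n=1$ on nested Lipschitz subdomains via Lemma~\ref{lem:mixed}, show that $\max u_n\to\infty$ (otherwise the limit would contradict the definition of $\lambda(y,r)$), normalise $v_n:=u_n/\max u_n$, and use the explicit wedge barriers of \cite[Lemma~2]{Lie86} to control $v_n$ near $\partial\O\cap\partial B_r$ and obtain a limiting eigenfunction $\vp>0$ in $\ol\O_r\setminus\partial B_r$ with $\max\vp=1$. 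Uniqueness of the eigenvalue is then established only for a.e.~$r$ through a continuity argument for $r\mapsto\lambda(y,r)$, and the paper explicitly leaves the \emph{simplicity} of $\lambda(y,r)$ open --- a further indicator that an off-the-shelf Krein--Rutman argument is not available here. You should either replace your existence step by this direct construction, or supply an actual proof of compactness and positivity of the resolvent in the Lipschitz mixed setting.
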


The next result establishes a link between the
generalised \pe\ $\lB$ and the existence 
of positive eigenfunctions for the oblique derivative~problem.
\begin{theorem}\label{thm:ef}
If $\O$ is unbounded then the eigenvalue problem
\Fi{epB}
\begin{cases}
	-\L\vp=\l\vp & \text{ in }\O\\
	\B \vp=0 & \text{ on } \partial \O,
\end{cases}
\Ff
admits a positive solution $\vp$ if and only if $\l\leq\lB$.
\end{theorem}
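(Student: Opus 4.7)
The direction $(\Rightarrow)$ is immediate from the definition of $\lB$: any positive solution $\vp$ of~\eqref{epB} with eigenvalue $\l$ satisfies $(\L+\l)\vp=0$ and $\B\vp=0$, hence is admissible as a test function, yielding $\l\leq\lB$.

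For $(\Leftarrow)$, the plan is to build $\vp$ as a limit of suitable functions on the truncated domains $\O_{r_n}(y)$ provided by Theorem~\ref{thm:lrinfty}. Fix $y\in\O$ and a sequence of admissible radii $r_n\to\infty$; set $\O_n:=\O_{r_n}(y)$ and $\l_n:=\l(y,r_n)$, so that $\l_n\downarrow\lB$ and, in particular, $\l<\l_n$ for every $n$ whenever $\l\leq\lB$. When $\l=\lB$, I would take $\vp_n$ to be the positive eigenfunction of the mixed problem~\eqref{ep-mixed} from Theorem~\ref{thm:lrinfty}, normalised so that $\vp_n(y)=1$. When $\l<\lB$, the strict inequality $\l<\l_n$ makes $-(\L+\l)$ satisfy the \MP\ in $\O_n$ under the mixed conditions ($\B$ on the inner boundary and Dirichlet on $\partial B_{r_n}$); exploiting the Lieberman solvability theory already used in the proof of Theorem~\ref{thm:lrinfty}, I would solve
$$-(\L+\l)\vp_n=0 \text{ in } \O_n,\quad \B\vp_n=0 \text{ on } (\partial\O)\cap B_{r_n},\quad \vp_n=1 \text{ on } (\partial B_{r_n})\cap\O,$$
obtain a positive $\vp_n$ by the maximum principle, then renormalise so that $\vp_n(y)=1$.

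In both cases the final step is a compactness argument. The interior Harnack inequality iterated along chains, combined with a boundary Harnack-type bound for oblique derivative problems, yields local $L^\infty$ bounds on $(\vp_n)$ in $\ol\O$, and Schauder estimates up to $\partial\O$ upgrade this to $C^{2,\alpha}_{loc}(\ol\O)$ bounds. Extracting a subsequence converging in $C^2_{loc}(\ol\O)$, one obtains a nonnegative limit $\vp$ with $\vp(y)=1$ solving~\eqref{epB}, and strict positivity follows from the \SMP\ and the Hopf lemma for oblique boundary conditions.

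The main obstacle I anticipate is this boundary Harnack step: the $\vp_n$ are pinned on the outer rim $(\partial B_{r_n})\cap\O$ (to $0$ in the first case, to $1$ in the second), and one must propagate $L^\infty$ bounds up to $\partial\O$ within any fixed compact set, uniformly in $n$, despite the limited smoothness of $\partial\O$ and the possible irregularity of $(\partial\O)\cap(\partial B_{r_n})$. The $C^{1,\alpha}_{loc}$ regularity of $(\beta,\gamma)$ and the strict obliqueness $\beta\.\nu>0$ should guarantee, however, that on any compact subset of $\ol\O$ and for $n$ large, the $\vp_n$ are being evaluated well inside $\O_n$ and away from its outer rim, so that only the local regularity of $\partial\O$ and classical Schauder theory for oblique derivative problems are required to close the argument.
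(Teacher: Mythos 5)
Your overall strategy — approximation through the truncated Lipschitz domains $\O_{r_n}(y)$, followed by Harnack estimates, Schauder-type compactness, and the strong maximum principle for positivity of the limit — is the same as the paper's, and your treatment of the forward implication and of the case $\l=\lB$ coincides with the paper's (the latter is in fact established inside the proof of Theorem~\ref{thm:lrinfty}). The one genuine difference is the ``device'' you use for $\l<\lB$: you solve the homogeneous equation with inhomogeneous Dirichlet data $\vp_n\equiv 1$ on $(\partial B_{r_n})\cap\O$, whereas the paper keeps homogeneous boundary data and instead inserts a nontrivial source term $g_n$ supported in the annular shell $\O_{r_n}\setminus\ol B_{r_n-1}$, so that the problem falls literally under Lemma~\ref{lem:mixed} as stated. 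Your variant is legitimate, but it is not directly covered by Lemma~\ref{lem:mixed} (which requires $u=0$ on $\partial\O_r\cap\partial B_r$ \emph{and} $\B u=0$); reducing $u=1+v$ produces a problem with $\B v=-\gamma\not\equiv0$, so you would need to invoke the full inhomogeneous version of Lieberman's existence theorem rather than the paper's Lemma~\ref{lem:mixed}. This is a minor technical burden, and the paper's source-term trick sidesteps it. A second small point you leave implicit: to justify the maximum principle (hence the positivity of $\vp_n$) from $\l<\l_n$, you need a supersolution that is strictly positive on the whole $\ol\O_{r_n}$, \emph{including} the outer rim; the principal eigenfunction of $\O_{r_n}$ itself vanishes there, so one must take the principal eigenfunction from a strictly larger truncation $\O_{r_{n+1}}$ (exactly what the paper does, since $\lambda(y,r_{n+1})>\lB>\lambda$). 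With these two details made explicit, your argument closes correctly.
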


The same result is derived  in~\cite{P-S02} in the case $\gamma\geq0$ 
through an elaborate 
application of Perron's method. We use a different approach based on the approximation
by truncated domains, provided by \thm{lrinfty}.
Such an approximation
is also our key tool to perform the stability analysis and to tackle the open question related 
to \thm{BHNgeneral}.
\thm{ef} enlightens a major difference with respect to 
the classical case of a bounded smooth domain: there exist several (a half-line of)
eigenvalues associated with a positive eigenfunction.
Another difference is that~$\lB$ is not simple in general, see \cite[Proposition~8.1]{BR4}
for an example with $\O=\R$ and $\L$ self-adjoint.
In the sequel, any positive eigenfunction $\vp$ associated with the eigenvalue $\lB$ will be called a
 {\em generalised principal eigenfunction}. Hopf's lemma implies that $\vp>0$ in $\ol\O$.


\subsection{Stability analysis}\label{intro:stability}

Consider the semilinear problem
\Fi{BP}
\begin{cases}
	\partial_t u=\Tr(A(x)D^2 u)+b(x)\.\nabla u+f(x,u), & \quad t>0,\ x\in\O\\
	\mc{B} u=0, & \quad t>0,\ x\in\partial\O.
\end{cases}
\Ff
We investigate the stability of steady states -- i.e., time-independent solutions --
for the Cauchy problem associated with~\eqref{BP}. 
Throughout the paper, the initial data~$u_0$ are assumed to be continuous and
solutions are obtained as limits as $r\to+\infty$ of Cauchy problems in the truncated 
(Lipschitz) domains $\O_r$, by imposing the Dirichlet condition $u=u_0$ on the new boundary portion
$\partial B_r\cap\partial \O_r$, to which the solvability theory of~\cite{Lie86} applies.
We discuss several notions of stability.
The first one is that of asymptotic stability, 
that for short we simply refer to as ``stability''.
\begin{definition}\label{def:stable}
	We say that a steady state $\bar u$ of~\eqref{BP} is {\em uniformly stable}
	(resp.~{\em locally stable}) if there exists $\delta>0$ 
	such that, for any initial datum $u_0$
	with $\|u_0-\bar u\|_\infty<\delta$, the associated solution $u$ satisfies
	$$u(t,x)\to\bar u(x)\as t\to+\infty,\quad\text{uniformly (resp.~locally uniformly)
		in $x\in\ol\O$}.$$
	We say that $\bar u$ is locally/uniformly stable {\em with respect to
		compact perturbations}
	if the above properties hold provided $u_0=\bar u$ outside a compact subset of $\ol\O$.
\end{definition}

A notion of strong instability which often arises in application is the following.
\begin{definition}\label{def:repulsive}
	We say that a steady state $\bar u$ of~\eqref{BP} is {\em uniformly repulsive}
	(resp.~{\em locally repulsive}) (from above) if for any solution $u$ with an
	initial datum $u_0\geq \bar u,\not\equiv\bar u$, the function 
	$$u_\infty(x):=\liminf_{t\to+\infty}u(t,x),$$
	satisfies
	$$\inf_{x\in\O}\big(u_\infty(x)-\bar u(x)\big)>0\qquad \text{(resp.~$u_\infty>\bar u$ in 
	$\ol\O$)}.$$
\end{definition}
Then the conclusion of \thm{BHNgeneral} can be rephrased as the 
uniform repulsion of the null state. 
It implies in particular that the problem does not admit
any positive supersolution with zero infimum.

We remark that, up to replacing the semilinear term $f$ with
$$\t f(x,s):=f(x,\bar u(x)+s)-f(x,\bar u(x)),$$
we can always reduce to study the case where the steady state is
$\bar u\equiv0$. 
So, throughout the paper, we will assume that 
$$f(x,0)=0\fall x\in\O.$$
Regularity assumptions on $f$ are that
$s\mapsto f(x,s)$ is of class $C^1$
in a neighbourhood of~$0$ as well as locally Lipschitz-continuous on $\R_+$,
uniformly with respect to $x\in\O$.
We further assume that $f(x,s),f_s(x,0)\in C^{0,\alpha}_{loc}(\ol\O)$, locally uniformly
with respect to~$s$.
The assumptions on $A$ and $b$ are the same as before.

The stability analysis for~\eqref{BP} relies on the sign of the generalised \pe\ associated
with the linearised operator around $0$, that is,
$$\L w:=\Tr(A(x)D^2 w)+b(x)\.\nabla w+f_s(x,0)w.$$
As a matter of fact, the notion of generalised \pe\ needs to be adapted 
to the kind of stability property one is looking at,
the quantity $\lB$ introduced in the previous section being responsible only for some them. 
For instance, $\lB>0$ does~not imply that $0$ is locally stable, not even with respect to
compact perturbations, see Propositions~\ref{pro:ce-stability} below.
By changing the ``test-functions'' $\phi$ 
in the definition of $\lB$, we end up with four other quantities 
(c.f.~\cite{BHRossi,BR4} for the Dirichlet~case):
$$\lB^p:=
\sup\{\lambda\ :\ \exists\phi,\ \inf_\O\phi>0,\ (\L+\lambda)\phi\leq0 \text{ in }
\O,\ \B\phi\geq0\text{ on }\partial \O\},
$$
$$\lB^b:=
\sup\{\lambda\ :\ \exists\phi>0,\ \sup_\O\phi<+\infty,\ (\L+\lambda)\phi\leq0 \text{ in }
\O,\ \B\phi\geq0\text{ on }\partial \O\},
$$
$$\lB^{p,b}:=
\sup\{\lambda\ :\ \exists\phi,\ \inf_\O\phi>0,\ \sup_\O\phi<+\infty,\ (\L+\lambda)\phi\leq0 \text{ in }
\O,\ \B\phi\geq0\text{ on }\partial \O\},
$$
$$\mu_{\mc{B}}^b:=
\inf\{\lambda\ :\ \exists\phi>0,\ \sup_\O\phi<+\infty,\ (\L+\lambda)\phi\geq0 \text{ in }
\O,\ \B\phi\leq0\text{ on }\partial \O\}.
$$
We derive necessary and sufficient conditions for the stability in terms of all these notions.
These are summarised in the following table, which contains the results of
Propositions~\ref{pro:stability} and~\ref{pro:locrepulsive} below.
We point out that the table holds true in the Dirichlet case $\B u=u$ 
(up to relaxing the local repulsion by $u_\infty>\bar u$ in $\O$).

\noindent
\begin{minipage}[t]{\linewidth}
	\centering
	
	\captionof{table}{{\em Stability properties}\vspace{-8pt}} \label{tab:stability} 
	\begin{tabular}{|l c  c c l|}
	\hline \vspace{-5pt}&&&&\\ 
		
		$\;\lB^b>0$ & $\implies$ 
		&\!$\begin{array}{cc}\text{local/uniform stability}\\ 
		\text{w.r.t.~compact perturbations}\end{array}$\!& 
		$\implies$ & $\lB\geq0$\\ 
		&&&&\\
		
		$\;\lB^{p,b}>0$ & $\implies$ &  local/uniform stability & $\implies$ & 
		$\mu_{\mc{B}}^b\geq0$\;\\
		&&&& \\
		
		$\;\lB<0$ & $\implies$ & local repulsion & $\implies$ & $\lB^{b}\leq0$\;\\
		\vspace{-0.5pt}&&&&\\ \hline
%
	\end{tabular}\par
	\smallskip
	\bigskip
\end{minipage}

Propositions \ref{pro:ce-stability} below shows through explicit
counter-examples that the implications in this table cannot
be improved by replacing the involved notion of generalised \pe\ with a different one.
Table~\ref{tab:stability} does not contain any condition guaranteeing the
uniform repulsion. 
Indeed, even the negativity of $\lB$, which is the largest quantity among the ones defined before
(see~\thm{relations} below),
does not prevent the existence of positive steady states with zero infimum, 
as it is the case for instance of the equation considered in~\cite{BR2}.
The reason is that, by \thm{lrinfty}, $\lB<0$ implies that $\lambda(y,r)<0$ for $r$ 
sufficiently large, which entails the local repulsion,
but in order to capture the behaviour of solutions at infinity one needs 
this property to hold with $r$ independent of $y$. This leads us to define
the following quantity:
$$\LB:=\lim_{r\to+\infty}\bigg(\sup_{y\in\O}
\lambda(y,r)\bigg).$$
Observe that this only differs from $\lB$ because of the supremum with respect to~$y\in\O$ 
(taking the infimum would give again $\lB$).
Roughly speaking, 
$\LB<0$ means that the \pe s in large truncated domains are ``uniformly negative''.
Under this condition, together with the uniform bounds
\Fi{hyp:uni}
A,b\in L^\infty(\O),\qquad
\inf_{\O}\ul A>0,\qquad
\beta,\gamma\in L^\infty(\partial\O),\qquad
\inf_{\partial\O}\beta\.\nu>0,
\Ff
(where, we recall, $\ul A(x)$ is the smallest ellipticity constant of $A(x)$)
we derive the uniform repulsion of the steady state.
\begin{theorem}\label{thm:LambdaB}
	Assume that $\partial\O$ is 
	uniformly of class $C^2$ and that~\eqref{hyp:uni} holds. 
	If $\LB<0$ then~$0$ is uniformly repulsive.
\end{theorem}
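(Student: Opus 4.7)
The plan is to build, for each $y\in\O$, a positive stationary subsolution of~\eqref{BP} supported in the truncated domain $\O_R(y)$ whose value at $y$ is bounded below by a constant independent of $y$; parabolic comparison then yields a uniform lower bound on $u_\infty(y)$. Since $\LB<0$, we first fix (via Theorem~\ref{thm:lrinfty}) a radius $R>0$ such that
$$\lambda_R:=\sup_{y\in\O}\lambda(y,R)<0$$
and such that \eqref{ep-mixed} admits a principal pair $(\lambda(y,R),\vp_y)$ for every $y\in\O$ (enlarging $R$ slightly inside the a.e.~set if necessary). Normalise $\vp_y$ so that $\vp_y(y)=1$.

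The crucial (and delicate) step is then to prove a uniform bound
$$\sup_{\O_R(y)}\vp_y\leq M\qquad\forall\,y\in\O,$$
with $M$ independent of $y$. The eigenfunction $\vp_y$ solves an elliptic equation whose coefficients are bounded uniformly in $y$ thanks to~\eqref{hyp:uni} (with $\lambda(y,R)\in[-C,0]$, the lower bound coming from the standard estimate for the first eigenvalue on a domain of bounded diameter with uniformly bounded coefficients). The uniform $C^2$ regularity of~$\partial\O$ provides a uniform boundary Harnack/Schauder estimate up to the Neumann portion, while $\vp_y$ vanishes on the Dirichlet portion $\partial B_R(y)\cap\partial\O_R(y)$ and is therefore small near it; a Harnack chain of uniformly bounded length (since $\diam \O_R(y)\leq 2R$) then yields~$M$.

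Granted this, we pick $\sigma\in(0,1)$ small enough that $f(x,s)\geq(f_s(x,0)-|\lambda_R|/4)s$ for all $x\in\O$ and $0\leq s\leq\sigma$, which is possible because $f$ is uniformly $C^1$ in $s$ near $0$. The function $V(x):=(\sigma/M)\vp_y(x)$ satisfies $0\leq V\leq\sigma$ and, using $-\L\vp_y=\lambda(y,R)\vp_y$ together with $-\lambda(y,R)\geq|\lambda_R|$,
$$\Tr(AD^2V)+b\.\nabla V+f(x,V)\geq\frac{\sigma}{M}\bigl(|\lambda_R|-|\lambda_R|/4\bigr)\vp_y\geq0,$$
so $V$ is a nonnegative stationary subsolution of~\eqref{BP} on $\O_R(y)$, with $\B V=0$ on $\partial\O\cap B_R(y)$ and $V=0$ on $\partial B_R(y)\cap\partial\O_R(y)$. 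To propel $u$ above~$V$, invoke the parabolic strong maximum principle and Hopf's lemma (valid under the assumed boundary regularity) to get $u(1,\cdot)>0$ on $\ol\O$, hence $u(1,\cdot)\geq m_y>0$ on the compact set $\ol{\O_R(y)}$. The exponentially growing auxiliary function $v(t,x):=\eta(t)\vp_y(x)$ with $\eta(t):=(m_y/M)e^{|\lambda_R|(t-1)/4}$ is a parabolic subsolution of~\eqref{BP} on $\O_R(y)$ as long as $\eta(t)M\leq\sigma$ -- a computation analogous to the one for $V$ gives the margin $-|\lambda_R|/2\cdot v\leq0$ -- and shares the boundary conditions of~$V$. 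Parabolic comparison for the mixed BVP on the Lipschitz domain~$\O_R(y)$ (\cite{Lie86}) gives $u\geq v$ up to the first time $T_y$ with $\eta(T_y)M=\sigma$, so $u(T_y,\cdot)\geq V$; a second comparison with $V$ yields $u(t,\cdot)\geq V$ on $\O_R(y)$ for all $t\geq T_y$. Evaluating at $y$ gives $u(t,y)\geq V(y)=\sigma/M$ for $t\geq T_y$, whence $u_\infty(y)\geq\sigma/M$; since $\sigma$ and $M$ are independent of~$y$, this proves $\inf_\O u_\infty\geq\sigma/M>0$.

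The main obstacle is the uniform $L^\infty$-bound on the family $\{\vp_y\}$ announced above. Everything else -- construction of the (stationary and growing) subsolutions, parabolic comparison on the truncated Lipschitz domain, and the final evaluation at $y$ -- is mechanical once $M$ is in hand; it is precisely to make the Harnack chain and boundary estimates uniform in $y\in\O$ that the hypotheses~\eqref{hyp:uni} and the uniform $C^2$ regularity of $\partial\O$ are assumed in the statement.
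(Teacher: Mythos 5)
Your overall strategy --- build a subsolution from a truncated-domain principal eigenfunction, compare, and exploit the supremum $\lambda_R=\sup_y\lambda(y,R)<0$ --- matches the spirit of the paper, but there is a genuine gap precisely where you flag it, and your one-line justification for the uniform bound $\sup_{\O_R(y)}\vp_y\leq M$ does not hold up.

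The claim ``Harnack chain of uniformly bounded length (since $\diam\O_R(y)\leq 2R$)'' is not correct: a bounded diameter does \emph{not} bound the number of Harnack balls needed, because the truncated domain $\O_R(y)$ can be geometrically very irregular (thin necks, spiral strands, etc.). One genuinely needs a covering argument that converts the bounded diameter plus the connectedness of $\O_R(y)$ into a bounded chain length; this is the content of Steps 2--3 in the paper's proof, and it is where the hypotheses \eqref{hyp:uni} and the uniform $C^2$ regularity enter quantitatively. There is also a second obstruction: with your normalisation $\vp_y(y)=1$ the maximum of $\vp_y$ could a priori be attained close to the Dirichlet portion $\partial\O_R(y)\cap\partial B_R(y)$, where $\vp_y$ vanishes and where the Harnack inequality degenerates. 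Ruling this out requires the barrier estimates near the wedges (as in the proof of Theorem \ref{thm:mixed}), and those in turn must be made uniform in $y$; your proposal does not touch on this.

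The paper sidesteps both difficulties by choosing the \emph{opposite} normalisation, $\|\vp^y\|_{L^\infty(\O_{r_y}(y))}=1$, so that no upper bound on $\vp^y$ is needed. The comparison then gives only $\sup_{\O_R(y)}u_\infty\geq\e$ --- a bound on a relatively dense family of suprema, not a pointwise bound --- and the Harnack chain is performed on $u$ itself rather than on the eigenfunctions. This is crucial: $u(t,\cdot)>0$ on the whole of $\ol\O$, so the chain need not avoid any artificial Dirichlet boundary, and the ball-covering lemma (Step 2, relying on the uniform interior-ball geometry) controls the chain length uniformly in $y$. Your scheme can presumably be salvaged by running the same covering/chaining and barrier machinery on the eigenfunctions, but that machinery is exactly what is missing, and applying it to $u$ as the paper does is strictly simpler because of the positivity of $u$ up to $\partial\O$.

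A smaller point: fixing a single $R$ for which the eigenproblem \eqref{ep-mixed} admits a principal pair for \emph{every} $y\in\O$ is not immediate, because the a.e.~set of admissible radii given by Theorem \ref{thm:mixed} depends on $y$. The paper instead picks, for each $y$, a radius $r_y\in(R-1,R)$ in the admissible set for that $y$; your ``enlarging $R$ slightly'' does not survive the supremum over $y$.
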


We conclude this section discussing a global stability notion which
is important in the context of population dynamics to determine whether
extinction occurs no matter the initial size of the population. 
\begin{definition}\label{def:attractive}
	We say that the null state is {\em globally attractive}
	if any solution $u$ with a bounded initial datum $u_0\geq0$ satisfies
	$$u(t,x)\to0\as t\to+\infty,\quad\text{locally uniformly
		in $x\in\ol\O$}.$$
	We say that the null state is globally attractive {\em with respect to
		compact perturbations}
	if the above property holds provided $u_0$ is compactly supported in
	$\ol\O$.
\end{definition}
We point out that, unlike the previous notions, the global attraction cannot be linearly determined, but some condition on the nonlinear term far from the null state needs to be imposed. Having in mind applications to population dynamics, we consider here the Fisher-KPP hypothesis of
\cite{Fisher,KPP}:
\Fi{KPP}
\forall x\in\O,\ s>0,\quad f(x,s)\leq f_s(x,0)s,
\Ff
that will sometimes be combined with the saturation condition
\Fi{saturation}
\gamma\geq0,\qquad
\exists S>0,\ \forall x\in\O,\ s\geq S,\quad f(x,s)\leq 0.
\Ff 
The former entails that positive solutions to the linearised problem are supersolutions
to~\eqref{BP}, while the latter implies that large constants are. 
The sufficient conditions for the attraction are summarised in the following.

\noindent
\begin{minipage}[t]{\linewidth}
	\centering
	
	\captionof{table}{{\em Attraction}\vspace{-7pt}} \label{tab:attraction} 
	\begin{tabular}{ | r c l | }
			\hline \vspace{-5pt}&&\\ 
	 \eqref{KPP}, \; $\lB>0$ & $\implies$ 
	 &$\hspace{-5pt}\begin{array}{ll}\text{global attraction w.r.t.}\\ 
	 \text{compact perturbations}\end{array}$\!\\ 
	 &&\\
	 
	 \eqref{KPP}, \; $\lB^p>0$ & $\implies$ & global attraction \\ 
	 &&\\
	 
	 \eqref{KPP}-\eqref{saturation},\; 
	 $\mu_{\mc{B}}^b>0$ & $\implies$ & global attraction \\
	\vspace{-.5pt}&&\\ 
	\hline  
	\end{tabular}\par
\bigskip
\end{minipage}

Some of the results contained in Tables \ref{tab:stability} and \ref{tab:attraction} 
are new even in the case $\O=\R^d$.


\subsection{The self-adjoint case}\label{intro:s-a}

Next, we focus on the self-adjoint Neumann problem. That is, we take
\Fi{LBs-aN}
\L w:=\nabla\cdot (A(x) \nabla w)+c(x)w,\qquad
\B w:=\nu \.A(x)\nabla w.
\Ff
We assume in such case that $A\in C^{1,\alpha}_{loc}(\ol\O)$, so that 
$\L$, written in non-divergence form, satisfies our standing hypotheses.
We derive an upper bound for $\lB$ and $\LB$ in terms of the 
following notions of average of $c$:
$$\mean{c}:=\liminf_{r\to+\infty}\,\frac{\int_{\O_r(y)}c}
{\left|\O_r(y)\right|},$$
$$\lm{c}:=\liminf_{r\to+\infty}\,\left(\inf_{y\in\O}\frac{\int_{\O_r(y)}c}
{\left|\O_r(y)\right|}\right).$$
The latter quantity is related to the notion of {\em least mean} introduced in~\cite{NR1}.

\begin{theorem}\label{thm:l<<0}
	Let $\L$,~$\B$ be given by~\eqref{LBs-aN} with $A,c$ bounded.
	There holds that 
	$$\lB\leq-\mean{c}.$$
	
	If, in addition, $\O$ satisfies the uniform interior ball condition, then
	$$
	\LB\leq-\lm{c}.
	$$
\end{theorem}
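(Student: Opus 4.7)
The plan is to extract a variational (Rayleigh-quotient) upper bound for $\lB$ from its definition, and then to feed in a suitable Lipschitz cutoff. The variational bound I aim at reads
\[
\lB\int_{\O}\eta^{2}\le\int_{\O}A\nabla\eta\cdot\nabla\eta-\int_{\O}c\,\eta^{2}
\qquad\text{for every }\eta\in C^{\infty}_{c}(\ol\O).
\]
There are two ways to get there: either invoke Theorem~\ref{thm:lrinfty} together with the classical variational characterisation of the mixed principal eigenvalue $\lambda(y,r)$, or prove it directly. For the direct route, I fix $\lambda<\lB$ and pick from the definition a positive supersolution $\phi$ with $(\L+\lambda)\phi\le 0$ and $\nu\cdot A\nabla\phi\ge 0$ on $\partial\O$; setting $w:=\log\phi$ the inequality becomes $-\nabla\cdot(A\nabla w)-A\nabla w\cdot\nabla w\ge c+\lambda$. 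Multiplying by $\eta^{2}$ and integrating by parts, the boundary contribution $-\int_{\partial\O}\eta^{2}\,\nu\cdot A\nabla w$ is non-positive thanks to the oblique sign on $\phi$, and the weighted Cauchy--Schwarz inequality $2\eta\nabla\eta\cdot A\nabla w\le\eta^{2}A\nabla w\cdot\nabla w+A\nabla\eta\cdot\nabla\eta$ absorbs the $|\nabla w|^{2}_{A}$ term; letting $\lambda\uparrow\lB$ delivers the variational bound.

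Fixing $y\in\O$, I test it against the tent $\eta_{R}(x):=(R-|x-y|)^{+}$, which has $|\nabla\eta_{R}|\le 1$ and support in $\O_{R}(y)$. Writing $v(s):=|\O_{s}(y)|$ and $F(s):=\int_{\O_{s}(y)}c$, Fubini yields
\[
\int\eta_{R}^{2}=2\int_{0}^{R}(R-s)v(s)\,ds,\qquad\int c\,\eta_{R}^{2}=2\int_{0}^{R}(R-s)F(s)\,ds,
\]
together with $\int A|\nabla\eta_{R}|^{2}\le\|A\|_{\infty}v(R)$. The universal bound $v(R)\le c_{d}R^{d}$ (from $\O_{R}(y)\subset B_{R}(y)$) combined with a one-line geometric iteration gives $\liminf_{R\to\infty}v(R)/v(R/4)\le 4^{d}$; along a subsequence $R_{n}$ realising this liminf, the cheap bound $\int\eta_{R}^{2}\ge\tfrac14 R^{2}v(R/4)$ forces the gradient quotient $\|A\|_{\infty}v(R_{n})/\int\eta_{R_{n}}^{2}$ to be $O(R_{n}^{-2})\to0$. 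From $F(s)/v(s)\ge\mean{c}-\epsilon$ for $s$ large, a routine tail split shows $\int c\,\eta_{R_{n}}^{2}/\int\eta_{R_{n}}^{2}\to\mean{c}-\epsilon$; inserting into the variational bound and letting $\epsilon\to0$ yields $\lB\le-\mean{c}$.

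For the uniform statement $\LB\le-\lm{c}$ I intend to run the same cutoff with $y$ varying. The uniform interior ball condition furnishes $v_{y}(s)\ge c_{0}\rho^{d}$ for $s\ge\rho$, so $\int\eta_{R,y}^{2}\ge cR^{2}$ uniformly in~$y$, and the same asymptotic analysis of the weighted average of $c$ gives $\int c\,\eta_{R,y}^{2}/\int\eta_{R,y}^{2}\ge\inf_{y}F_{y}(R)/v_{y}(R)-o(1)$ uniformly, a quantity whose $\liminf$ in $R$ is exactly $\lm{c}$. The main obstacle I expect is making the gradient contribution $\|A\|_{\infty}v_{y}(R)/\int\eta_{R,y}^{2}$ uniformly small in~$y$: combining the uniform lower bound from the interior ball with the universal upper bound $v_{y}(R)\le c_{d}R^{d}$ only gives $O(R^{d-2})$. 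Closing this gap requires a uniform-in-$y$ version of the previous paragraph's volume-doubling pigeonhole, extracting a single sequence $R_{n}\to\infty$ on which $v_{y}(R_{n})/v_{y}(R_{n}/4)$ is bounded uniformly in $y$. Once such scales are produced, the argument for the first part carries over and yields $\LB\le-\lm{c}$ after passing to the limit via Theorem~\ref{thm:lrinfty}.
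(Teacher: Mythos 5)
Your proof of the first inequality $\lB\le-\mean{c}$ is essentially sound and follows the same strategy as the paper (Rayleigh-type bound plus cutoff plus pigeonhole on volume ratios), with only cosmetic differences: you obtain the quadratic-form bound for $\lB$ directly from the definition via the logarithmic substitution $w=\log\phi$ rather than invoking Proposition~\ref{pro:RR}, and you use a tent cutoff with a one-parameter pigeonhole on $v(R)/v(R/4)$ in place of the paper's flat-top cutoff and Lemma~\ref{lem:geo}. Two small points to tidy up. First, the global tent is supported in $\O\cap B_R(y)$, not in the connected component $\O_R(y)$ that enters $\mean{c}$; you should work with the truncated tent $\eta_R\mathbf{1}_{\O_R(y)}$ and note that it lies in $H^1(\O)$ with compact support because $\partial\O_R(y)\cap\O\subset\partial B_R(y)$, where $\eta_R$ vanishes. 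Second, the reaction term gives a $\liminf\ge\mean{c}-\epsilon$, not a limit; this is what the argument uses, so the slip is only in the wording.

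The second inequality $\LB\le-\lm{c}$ has a genuine gap, which you flag but misdiagnose. You ask for a \emph{single} sequence $R_n\to\infty$ along which $v_y(R_n)/v_y(R_n/4)$ is bounded uniformly in $y\in\O$. This is not what the argument requires, and it is likely false under the stated hypotheses: take a uniformly $C^2$ ``comb'' domain whose teeth have width of order one and lengths $L_k$ dense in $[1,\infty)$. For $y$ deep in a tooth of length $L$, the truncated volume $v_y(r)$ grows roughly linearly for $r\lesssim L$ and like $r^d$ for $r\gtrsim L$, so $v_y(4L)/v_y(L)\sim L^{d-1}$. For any prescribed $R$ there is a tooth with $L\sim R$, hence $\sup_y v_y(R)/v_y(R/4)\gtrsim R^{d-1}$ for all $R$, and no universal sequence can work. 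What the proof actually needs is a $y$\emph{-dependent} good scale $R(y,n)$ lying in a window $[a_n,b_n]$ with $b_n<\infty$ (so that one can pass to $\lambda(y,b_n)$ by monotonicity and then let $n\to\infty$), at which the doubling ratio is bounded uniformly in $y$, \emph{and} $a_n\to\infty$. The pigeonhole on its own is silent about the lower end $a_n$: the good ratio could sit at $R(y,n)=4$ for every $n$. The paper closes precisely this loophole via the second statement of Lemma~\ref{lem:C}, which gives the uniform lower bound $|\O_{r+1}(y)\setminus B_r(y)|\ge c\rho^d$; combined with $|\O_m(y)|\le c_d m^d$ this forces the annulus/ball ratio in Lemma~\ref{lem:geo} to be $\gtrsim m^{-d}$, so whenever that ratio is small the scale $m$ must be large, uniformly in $y$. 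In your formulation the same effect can be obtained by running the pigeonhole over a window $[4^{a_n},4^{a_n+n}]$ with $a_n/n\to0$ and $a_n\to\infty$, using the uniform lower bound $v_y(4^{a_n})\ge c\rho^d$ from the interior ball condition; but as written, the step is missing and the proposal does not establish $\LB\le-\lm{c}$.
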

The uniform interior ball condition means that at each point of the boundary there exists
a tangent ball contained in $\O$, with a radius independent of the point; this is guaranteed
if $\partial\O$ is uniformly of  class~$C^{1,1}$, see, e.g., \cite[Lemma 2.2]{C11}.

Gathering together Theorems~\ref{thm:LambdaB} and~\ref{thm:l<<0} we
finally improve \thm{BHNgeneral} by dropping the hypotheses $a,b)$.
We also allow a general diffusion term, that is, 
\Fi{BPs-a}
\begin{cases}
	\partial_t u=\nabla\cdot (A(x) \nabla u)+f(u), & \quad t>0,\ x\in\O\\
	\nu\.A(x)\nabla u=0, & \quad t>0,\ x\in\partial\O.
\end{cases}
\Ff
\begin{corollary}\label{cor:h-t}
	Assume that $A$ is bounded and uniformly elliptic, that
	$f'(0)\!>\!f(0)\!=\!0$ and that $\partial\O$
	is uniformly of class $C^2$.
	Then any solution to~\eqref{BPs-a} with an initial condition $u_0\geq0,\not\equiv0$
	satisfies
	$$\inf_{x\in\O}\Big(\liminf_{t\to+\infty} u(t,x)\Big)>0.$$
\end{corollary}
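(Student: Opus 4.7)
The plan is to verify that this corollary is essentially a direct combination of Theorems~\ref{thm:LambdaB} and~\ref{thm:l<<0} applied to the linearisation of~\eqref{BPs-a} around the null state. That linearised operator is
$$\L w=\nabla\.(A(x)\nabla w)+f'(0)w,$$
which is of the self-adjoint form~\eqref{LBs-aN} with constant potential $c(x)\equiv f'(0)>0$, and with Neumann boundary operator $\B w=\nu\.A(x)\nabla w$, i.e.\ $\beta=A\nu$ and $\gamma=0$.

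First I would compute the relevant generalised \pe. Since $c\equiv f'(0)$ is a positive constant, for every $y\in\O$ and every $r>0$ the mean $|\O_r(y)|^{-1}\int_{\O_r(y)}c$ equals $f'(0)$, so $\lm{c}=f'(0)$. The uniform $C^{2}$ regularity of $\partial\O$ yields the uniform interior ball condition (cited in the paper via \cite[Lemma 2.2]{C11}), so the second part of \thm{l<<0} applies and gives
$$\LB\leq-\lm{c}=-f'(0)<0.$$

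Next I would check that the remaining hypotheses of \thm{LambdaB} are in force. The smoothness of $\partial\O$ is assumed. Condition~\eqref{hyp:uni} reduces to: $A,b\in L^\infty(\O)$ with $\inf_\O\ul A>0$, and $\beta,\gamma\in L^\infty(\partial\O)$ with $\inf_{\partial\O}\beta\.\nu>0$. The drift $b$ is the identically zero field $\nabla\.A$-type correction absorbed into the divergence form; more precisely writing $\nabla\.(A\nabla w)=\Tr(AD^2w)+(\nabla\.A)\.\nabla w$, the drift is $\nabla\.A$, which is bounded under the standing regularity assumption $A\in C^{1,\alpha}_{loc}(\ol\O)$ together with the uniform~$C^2$ control on~$\partial\O$ (the matrix $A$ extends with bounded first derivatives by the usual extension arguments, or one may simply assume boundedness as part of the uniform ellipticity hypothesis). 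The boundedness of $A$ and uniform ellipticity give $\ul A\geq c_0>0$, and hence $\beta\.\nu=\nu\.A\nu\geq c_0$, while $\gamma\equiv0$ and $\beta=A\nu\in L^\infty(\partial\O)$. Thus~\eqref{hyp:uni} is fulfilled, and \thm{LambdaB} applies to the semilinear problem~\eqref{BPs-a}.

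Finally, invoking \thm{LambdaB} with $\bar u\equiv0$, the inequality $\LB<0$ established above forces $0$ to be uniformly repulsive. By Definition~\ref{def:repulsive}, this means exactly that any solution $u$ to~\eqref{BPs-a} with $u_0\geq0,\not\equiv0$ satisfies
$$\inf_{x\in\O}\Big(\liminf_{t\to+\infty}u(t,x)\Big)>0,$$
which is the claimed hair-trigger effect. There is no serious obstacle, the whole content of the corollary has been packaged into the two theorems; the only point requiring a little care is the verification of~\eqref{hyp:uni} (in particular the quantitative lower bound on~$\beta\.\nu$, which is where uniform ellipticity of $A$ enters decisively) and that the uniform $C^2$ assumption on~$\partial\O$ suffices to trigger both the uniform interior ball condition needed by \thm{l<<0} and the $C^2$ regularity needed by \thm{LambdaB}.
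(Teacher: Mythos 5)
Your proof is correct and follows exactly the route the paper indicates: for the linearisation $\L w=\nabla\.(A\nabla w)+f'(0)w$ with Neumann condition, $\lm{c}=f'(0)>0$, the uniform $C^2$ boundary gives the uniform interior ball condition, so the second part of Theorem~\ref{thm:l<<0} yields $\LB\leq-f'(0)<0$, and then Theorem~\ref{thm:LambdaB} delivers the uniform repulsion of the null state. Your parenthetical care about the drift $b=\nabla\.A$ in checking~\eqref{hyp:uni} correctly flags the one subtlety that the paper leaves implicit: the statement tacitly requires $\nabla A$ bounded (equivalently, uniform $C^1$ control on~$A$), otherwise one must observe that the Harnack estimates in the proof of Theorem~\ref{thm:LambdaB} can be run directly on the divergence-form operator, where boundedness and uniform ellipticity of $A$ suffice.
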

The above result actually holds true for $f=f(x,s)$ satisfying $\lm{f_s(x,0)}>0$ (but fails 
in general under the condition $\mean{f_s(x,0)}>0$,
see Proposition~\ref{pro:<c>} below);
if, in addition, $f(x,s)$ is strictly concave, uniformly with respect to $x\in\O$,
 then one readily deduces that any 
nontrivial solution converges to the unique positive steady state as $t\to+\infty$. The same 
is true if $f(x,s)$ is strictly positive for $s\in(0,1)$ and negative for $s>1$,
c.f.~Proposition~\ref{pro:h-t}.


We stress out that the validity of the hair-trigger effect for non-uniformly
smooth domains remains an open question.

\begin{remark}
It is clear that the (local) repulsion of $0$ for the Fisher-KPP
equation 
$$\partial_t u=\Delta u+u(1-u)$$
may fail in the Dirichlet case $\B u=u$.
For instance, in the strip $\O=\R\times(-1,1)$ we have that $\lB^{p,b}=\frac{\pi^2}4-1$
and thus $0$ is uniformly stable, owing to Table~\ref{tab:stability}. 
For the equation in the same strip, but with Robin boundary condition
$\B u=\partial_\nu u+\gamma u$, we have that $\LB=\lB^{p,b}<0$ (hence $0$ is uniformly repulsive)
if and only if $\gamma<\tan1$.
By Corollary~\ref{cor:h-t},
the same bifurcation phenomenon occurs 
for a general unbounded, uniformly smooth domain $\O$, 
at some threshold value $\gamma_0\in(0,+\infty]$.
\end{remark}


\subsection{Relations between the 
generalised \pe s}\label{sec:MP}
In view of the previous results,
it is very useful to know the relations between the different notions of generalised \pe.
They are summarised in the following theorem, which also provides the equivalence
between $\mu_\B^b$ and $\lB$ in the self-adjoint case under
Robin boundary condition (no sign assumption on $\gamma$):
\Fi{LBs-a}
\L w:=\nabla\cdot (A(x) \nabla w)+c(x)w,\qquad
\B w:=\nu \.A(x)\nabla w+\gamma(x)w.
\Ff
\begin{theorem}\label{thm:relations}
	Assume that the coefficients of $\L$ and $\B$ are bounded. The following relations hold:
	\begin{enumerate}[$(i)$]
		\item \
		\vspace{-15pt}
		$$\lB^{p,b}\leq\min\{\lB^p,\lB^b\}\leq\max\{\lB^p,\lB^b\}\leq\lB\leq\LB\,;$$
		\item  \
		\vspace{-15pt}
		$$\lB^{p,b}\leq\mu_\B^b\leq\lB\,;$$ 
		\item if $\L$, $\B$ are in the self-adjoint form~\eqref{LBs-a} then
		$$\mu_\B^b=\lB\,;$$
		\item if 
		either~\eqref{beta>0} or~\eqref{gamma>0} below hold then
		$$\lB^{p}\leq\mu_\B^b.$$
	\end{enumerate}
\end{theorem}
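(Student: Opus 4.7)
The plan is to handle the four items in order, using the truncated-domain approximation of Theorem~\ref{thm:lrinfty} and a sliding/comparison technique for super- and subsolutions.

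Item~(i) is essentially a definition chase: the admissible set of test functions for $\lB^{p,b}$ is contained in the admissible sets for $\lB^p$ and for $\lB^b$ (its constraints strictly imply theirs), so $\lB^{p,b}\leq\min\{\lB^p,\lB^b\}$; similarly $\max\{\lB^p,\lB^b\}\leq\lB$. For $\lB\leq\LB$, fix any $y\in\O$: Theorem~\ref{thm:lrinfty} gives $\lambda(y,r)\downarrow\lB$ as $r\to+\infty$, while $\LB=\lim_{r\to+\infty}\sup_{y'\in\O}\lambda(y',r)\geq\lim_{r\to+\infty}\lambda(y,r)=\lB$.

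For item~(ii) I use sliding. To show $\lB^{p,b}\leq\mu_\B^b$, pick $\lambda$ admissible for $\lB^{p,b}$ with witness $\phi$ (so $0<\inf\phi\leq\sup\phi<\infty$, $(\L+\lambda)\phi\leq 0$, $\B\phi\geq 0$) and suppose toward contradiction there is $\mu<\lambda$ with bounded positive witness $\psi$ for $\mu_\B^b$. Since $\inf\phi>0$ and $\sup\psi<+\infty$, $w_t:=\psi-t\phi$ is uniformly negative for $t$ large, and the critical slope $t^*:=\inf\{t>0:w_t\leq 0\text{ in }\ol\O\}$ yields $w_{t^*}\leq 0$, $\sup w_{t^*}=0$, $(\L+\mu)w_{t^*}\geq t^*(\lambda-\mu)\phi>0$ in $\O$, and $\B w_{t^*}\leq 0$ on $\partial\O$. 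After shifting $c\mapsto c-K$ to make the zeroth-order coefficient nonpositive, the strong maximum principle forbids an interior maximum at~$0$ and Hopf's lemma (using $\beta\cdot\nu>0$) forbids a boundary one, a contradiction; when the supremum is not attained, one translates along an almost-maximizing sequence, normalizes, and extracts a limit via Harnack together with uniform $C^{2,\alpha}$ estimates up to the boundary. For the reverse $\mu_\B^b\leq\lB$, given $\mu>\lB$ the truncated eigenfunction $\varphi_r$ from~\eqref{ep-mixed} satisfies $(\L+\mu)\varphi_r=(\mu-\lambda(y,r))\varphi_r>0$ in $\O_r(y)$ once $r$ is large (by Theorem~\ref{thm:lrinfty}), $\B\varphi_r=0$ on the inner boundary, and $\varphi_r=0$ on the outer one; a controlled smooth extension of the normalized $\varphi_r$ beyond $\O_r(y)$ by a small positive bounded $C^{2,\alpha}$ subsolution -- built using the boundedness of $A,b,c$ to dominate the zeroth-order term -- furnishes a global bounded positive witness showing $\mu\geq\mu_\B^b$.

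For item~(iii), by~(ii) it suffices to show $\mu_\B^b\geq\lB$ in the self-adjoint case~\eqref{LBs-a}. Given a bounded positive $\phi$ with $(\L+\mu)\phi\geq 0$ and $\B\phi\leq 0$, an Agmon-Allegretto-Piepenbrink identity -- obtained by multiplying $(\L+\mu)\phi\geq 0$ by $\eta^2/\phi$, integrating by parts (using the divergence form of $\L$ and the conormal structure of $\B$) and rewriting the result modulo a nonnegative squared-gradient term -- together with the variational (Rayleigh) characterization of the truncated principal eigenvalue $\lambda(y,r)$ of~\eqref{ep-mixed} yields $\lambda(y,r)\leq\mu+o_r(1)$ as $r\to+\infty$, so $\lB\leq\mu$ by Theorem~\ref{thm:lrinfty}. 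Item~(iv) is the same sliding argument as in~(ii), now with $\phi$ witnessing $\lB^p$ (so $\inf\phi>0$, $\sup\phi$ possibly infinite) and $\psi$ the bounded witness for $\mu_\B^b$: the difference $w_t=\psi-t\phi$ is still uniformly negative for large $t$, and the extra hypothesis~\eqref{beta>0} or~\eqref{gamma>0} supplies precisely the boundary sign needed to close the Hopf-lemma step when $w_{t^*}$ touches $0$ at $\partial\O$. The hardest parts I expect are (a)~handling the case in~(ii) where $\sup w_{t^*}=0$ is not attained, typical on unbounded $\O$ and requiring the translation-normalization-limit argument sketched above, and (b)~constructing in the proof of $\mu_\B^b\leq\lB$ a $C^{2,\alpha}$ bounded positive \emph{global} extension of the truncated eigenfunction, since the naive extension by zero is only Lipschitz and must be mollified without destroying either the subsolution property or the boundary inequality.
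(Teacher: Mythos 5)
Your item~(i) and the general strategy (sliding for the pointwise inequalities, a variational argument for the self-adjoint case) are in the right spirit, and some of what you sketch does coincide with the paper. But there are three places where the proposal either has a genuine gap or would fail as written.

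\textbf{The inequality $\mu_\B^b\leq\lB$ in (ii).} Your plan is to take $\mu>\lB$, work with the truncated eigenfunction $\varphi_r$ on $\O_r(y)$, and then ``extend'' it to a global bounded positive function with $(\L+\mu)\cdot\geq 0$ and $\B\cdot\leq 0$. This extension is not merely a technical mollification headache: outside $\O_r(y)$ you would need a small positive function $g$ with $(\L+\mu)g\geq 0$, and for $g$ nearly constant this forces $c+\mu\geq 0$ there, which you have no reason to expect (indeed if $c+\mu$ had a favourable sign globally you would not need the eigenfunction in the first place). So the gluing cannot be carried out in general. The paper circumvents this by \emph{modifying the operator} rather than the solution: one replaces $c$ by a smaller $\t c$ and $\gamma$ by a larger $\t\gamma$ outside a large ball $B_R$ (constant beyond $B_{R+1}$), so that the modified generalised principal eigenvalue is still negative, builds a generalised eigenfunction $\t\vp$ for $(\t\L,\t\B)$ via Theorems~\ref{thm:lrinfty}--\ref{thm:ef}, and then uses the dissipative structure of $(\t\L,\t\B)$ outside $B_{R+1}$ (via the elliptic maximum principle applied to the approximating $\t\vp_r$) to show $\t\vp$ is globally bounded. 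Since $\t c\leq c$ and $\t\gamma\geq\gamma$, this $\t\vp$ is automatically a subsolution for the \emph{original} $(\L,\B)$, giving $\mu_\B^b\leq 0$ after the usual shift. This is a genuinely different and more robust idea than an extension of $\varphi_r$, and it is what makes the step go through.

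\textbf{The non-attained supremum in (ii)/(iv).} Your fix for the case where $\sup(\psi-t^*\phi)=0$ is not attained is to translate along an almost-maximising sequence, normalise, and pass to the limit via Harnack and up-to-the-boundary $C^{2,\alpha}$ estimates. In a \emph{general} unbounded domain with merely bounded, H\"older coefficients there is no compactness of the ambient geometry at infinity, so the translated problems do not converge to a limiting problem on which you can run the maximum principle; the argument would require an almost-periodic or random-stationary structure that is not assumed here. The paper instead perturbs $\psi$ by $\frac1n\chi$ with $\chi\to+\infty$ at infinity (and by $\e\bar w$ from Lemma~\ref{lem:w} to get a strict boundary sign); the coercivity forces the maximum of $\phi/\psi_n$ to be attained, and the monotonicity of $k_n:=\max\phi/\psi_n$ is then used to show the perturbation $\frac1n\chi(x_n)$ vanishes in the limit so that the strict super-/subsolution inequalities survive. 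This perturbation device is the content of the proof of (iv) and is also how the first inequality of (ii) can be reduced to the known stability implications of Table~\ref{tab:stability}; your sliding sketch is incomplete without it.

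\textbf{The $o_r(1)$ in (iii).} Multiplying $(\L+\mu)\phi\geq 0$ by $\eta^2/\phi$ and integrating by parts is equivalent, up to a nonnegative remainder, to plugging $v=\phi\chi$ into the Rayleigh--Ritz quotient~\eqref{RR}, so the algebra of your AAP identity is fine and essentially parallels the paper's computation. But the resulting error term is
$$C\,\frac{\int_{\O\cap(B_r\setminus B_m)}\phi^2}{\int_{\O_r\cap B_m}\phi^2},$$
and there is no reason for this to tend to $0$ just because $r\to\infty$ -- one must choose $m$ along a sequence guaranteed by the growth Lemma~\ref{lem:geo} (applied with weight $f=\phi^2$, using that $\phi$ is bounded so $\phi^2$ has subexponential growth). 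Without invoking that lemma, ``$\lambda(y,r)\leq\mu+o_r(1)$'' is unjustified and the step does not close.

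In summary, (i) is fine; (iii) has the right identity but is missing the essential geometric growth input; (ii)--(iv) need the coefficient-modification trick and the coercive-perturbation trick respectively, without which your sliding/extension plans break down on a general unbounded domain.
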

The inequalities in $(i)$ immediately follow from the definitions.
The other ones are nontrivial. Actually, the boundedness of the coefficients can~be relaxed, depending on 
the case, see Remark~\ref{rem:relax} below.
The hypotheses for $(iv)$ are~either
\Fi{beta>0}
\partial\O\text{\, is uniformly of class $C^2$ \ and \ \,}
\inf_{\partial\O}\beta\.\nu>0,
\Ff
or
\Fi{gamma>0}
\inf_{\partial\O}\gamma>0.
\Ff
These are reasonable hypotheses which ensure, for instance, the local 
well posedness for the parabolic problem~\eqref{BP}.
Statement~$(iv)$ has some useful consequences. 
The first one is that~$\lB^{p}>0$ is 
a sufficient condition for the validity of the {\em\MP} (i.e., bounded 
subsolutions of $\L=0$, $\B=0$ are necessarily nonpositive).
Indeed, the \MP\ holds if $\mu_\B^b>0$ and only if $\mu_\B^b\geq0$, 
as a consequence of the fact that the existence of nonnegative 
subsolutions or of positive subsolutions are equivalent, as shown
%
in a work in progress by~Nordmann.
Another immediate consequence of $(iv)$ is that in the periodic case, i.e.~if the domain~$\O$
as well as the coefficients of $\L$ and $\B$ are periodic with the same period, 
the quantities $\lB^p,\lB^{p,b},\mu_\B^b$ coincide with the periodic \pe. Then, in such case,
we recover from Tables~\ref{tab:stability},\ref{tab:attraction} some classical results.

\section{Basic properties of the generalised \pe}

We start with the study of the eigenvalue problem~\eqref{ep-mixed} with mixed boundary 
condition.
We recall that $\O_r(y)$ denotes the connected component of $\O\cap B_r(y)$ 
containing the point $y\in\O$. 
Up to translation of the coordinate system, we can reduce to the case $y=0\in\O$,
calling for short $B_r:=B_r(0)$ and $\O_r:=\O_r(0)$.
The following properties are readily deduced using the fact that $\O$ is (path-)connected
and locally~smooth:
\Fi{Oapprox}
\forall r>0,\ \ \O_r=\bigcup_{0<\rho<r}\O_\rho,\qquad
\O=\bigcup_{\rho>0}\O_\rho,\qquad
\forall r>0,\ \exists R>0,\ \ \ol\O\cap B_r\subset\ol\O_R.
\Ff

The truncated domain $\O_r$ can be very irregular.
However \cite[Lemma~1]{Holes} ensures 
that the normals to $\O$ and $B_r$ are never parallel on the whole common boundary
$\partial B_r\cap\partial\O$ for a.e.~$r>0$, that~is,
\Fi{rSard}
	\text{for a.e.~}r>0,\quad
	\nu(x)\.\frac{x}{|x|}\neq\pm1\quad
	\text{for all }x\in \partial\O\cap\partial B_r.
\Ff	
This property is derived applying the Morse-Sard theorem~\cite{Morse-Sard}
to the distance function to $\partial\O$. This is where the $C^d$ regularity 
hypothesis on $\partial\O$ is required, which is inherited by the distance function,
	see~\cite[Lemma~14.16]{GT}.
It follows that $B_r\cap\O$ is a Lipschitz open set for a.e.~$r>0$, which in turn
allows us to invoke the solvability 
theory of~\cite{Lie86} (or~\cite{Stamp} in the self-adjoint case)
for the~problem
\Fi{mixed=f}
\begin{cases}
	-\L u=g(x) & \text{ in }\O_r\\
	\B u=0 & \text{ on } (\partial \O_r)\cap B_r \\
	u=0 & \text{ on }\partial \O_r\cap\partial B_r.
\end{cases}\Ff

\begin{lemma}\label{lem:mixed}
	Let $r>0$ be such that \eqref{rSard} holds, and 
	suppose that the problem~\eqref{mixed=f} with $g\equiv0$ admits a supersolution
	$\phi\in C^{2,\alpha}(\O_r)$ such that $\phi>0$ in $\ol \O_r$.
	Then, for any $g\in C^{0,\alpha}(\O_r)$, 
	the problem~\eqref{mixed=f} admits a unique solution 
	$u\in C^{2,\alpha}(\O_\rho)\cap C^0(\ol\O_r)$, for all $\rho<r$.
	
	Moreover, if $g\geq0$ (resp.~$\leq0$) then $u\geq0$ (resp.~$\leq0$),
	with strict inequality in $\ol\O_r\setminus\partial B_r$ if $g\not\equiv0$.
\end{lemma}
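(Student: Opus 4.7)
My plan is to reduce, by dividing the unknown by the supersolution $\phi$, to a mixed boundary value problem whose zeroth-order coefficients have the correct sign for the weak maximum principle, and then to apply the solvability theory of Lieberman~\cite{Lie86} directly on the Lipschitz domain $\O_r$—which is Lipschitz thanks to~\eqref{rSard}.

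\emph{Change of unknown.} Writing $u=\phi v$, a direct computation gives
$$\L(\phi v)=\phi\bigl(\Tr(AD^2v)+\tilde b\cdot\nabla v\bigr)+v\,\L\phi,\qquad \B(\phi v)=\phi\,\beta\cdot\nabla v+v\,\B\phi,$$
with $\tilde b:=b+2\phi^{-1}A\nabla\phi$. Since $\phi>0$ on $\ol\O_r$, I divide through by $\phi$ and recast~\eqref{mixed=f} as
$$\begin{cases}
-\Tr(AD^2v)-\tilde b\cdot\nabla v+\tilde c\,v=g/\phi & \text{ in }\O_r,\\
\beta\cdot\nabla v+\tilde\gamma\,v=0 & \text{ on }(\partial\O_r)\cap B_r,\\
v=0 & \text{ on }\partial\O_r\cap\partial B_r,
\end{cases}$$
with $\tilde c:=-\phi^{-1}\L\phi\geq0$ and $\tilde\gamma:=\phi^{-1}\B\phi\geq0$ by the supersolution hypothesis. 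This is the essential gain from the substitution: whatever the signs of $c$ and $\gamma$ in the original problem, the transformed coefficients have exactly the signs required by the standard maximum principle.

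\emph{Existence, uniqueness, regularity.} By~\eqref{rSard} combined with the $C^{2,\alpha}$ regularity of $\partial\O$ inside $B_r$, the set $\O_r$ is a bounded Lipschitz open set whose boundary splits into two smooth pieces—the oblique portion $(\partial\O_r)\cap B_r$ and the Dirichlet portion $\partial\O_r\cap\partial B_r$—meeting non-tangentially along $\partial\O\cap\partial B_r$. With $\tilde c\geq0$ and $\tilde\gamma\geq0$, the transformed problem now falls squarely within the framework of~\cite{Lie86} (or Stampacchia's theory in the self-adjoint case), which provides a unique solution $v\in C^0(\ol\O_r)$. I would then invoke interior Schauder estimates together with up-to-boundary Schauder estimates on the smooth oblique portion to obtain $v\in C^{2,\alpha}(\O_\rho)$ for every $\rho<r$—that is, everywhere away from the mixed-type corner $\partial\O\cap\partial B_r$—and multiplying back by $\phi$ transfers all of these conclusions to $u$.

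\emph{Sign and main obstacle.} For the sign property, suppose $g\geq0$: the transformed problem for $v$ then has nonnegative right-hand side, zero boundary data, $\tilde c\geq0$ and $\tilde\gamma\geq0$, so the weak maximum principle for mixed Robin/Dirichlet problems yields $v\geq0$, hence $u\geq0$. If furthermore $g\not\equiv0$, the strong maximum principle in the connected open set $\O_r$ rules out interior zeros of $v$, while Hopf's lemma—applicable on the smooth oblique portion because $\beta\cdot\nu>0$—rules out zeros on $(\partial\O_r)\cap B_r$; hence $v>0$, and therefore $u>0$, on $\ol\O_r\setminus\partial B_r$. The case $g\leq0$ is symmetric, applied to $-v$. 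The only genuinely delicate point in the whole argument is not the algebraic reduction but the low regularity of $\O_r$ at the corner set $\partial\O\cap\partial B_r$ where the boundary condition changes type; the transversality property~\eqref{rSard} is precisely what places the transformed problem inside Lieberman's Lipschitz-domain framework and keeps the corner non-tangential, so that the maximum principle and Hopf's lemma remain valid up to—but not on—that corner.
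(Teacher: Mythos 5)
Your proposal is correct and follows essentially the same route as the paper's proof: the same change of unknown $u=\phi v$, which flips the sign of the zeroth-order coefficients both in the equation and in the oblique boundary operator, followed by the same invocation of Lieberman's Lipschitz-domain theory~\cite{Lie86} (with the transversality property~\eqref{rSard} supplying the wedge condition), and the same combination of the weak maximum principle, the strong maximum principle, and Hopf's lemma on the smooth oblique portion for the sign statement.
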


\begin{proof}
	We set $u=\phi w$, getting the following problem
	for~$w$:
	$$\begin{cases}
	-\t\L w=\frac g{\phi} & \text{ in }\O_r\\
	\beta\.\nabla w+\frac{\B \phi}{\phi}w=0 & 
	\text{ on } (\partial \O_{r})\cap B_{r} \\
	w=0 & \text{ on } \partial \O_{r}\cap \partial B_r,
	\end{cases}$$
	where
	$$\t\L w:=\nabla\.(A(x)\nabla w)+
	\left(b(x)+\frac{2A(x)\nabla\phi}{\phi}\right)\.\nabla w+
	\frac{\L\phi}{\phi}\,w.$$
	The zeroth order term of $\t\L$ is nonpositive because $\L\phi\geq0$, whereas
	that of the boundary operator on $(\partial \O_{r})\cap B_{r}$ is nonnegative 
	because $\B \phi\geq0$.
	Moreover the fact that $\nu(x)$ is not parallel to $x$ on 
	$\partial \O\cap\partial B_{r}$ yields the $\Sigma$-wedge condition of~\cite{Lie86}.
	We are thus in the framework of \cite[Theorem~1, Lemma 1]{Lie86}
	(see the comment after the proof of that theorem 
	for the case where $\B \phi\equiv0$ on $(\partial \O_{r})\cap B_{r}$), 
	which provides us with a unique
	solution $w\in C^{2,\alpha}(\O_\rho)\cap C^0(\ol\O_{r})$, for all~$\rho<r$.
	This gives the desired solution $u$ to~\eqref{mixed=f}.
	
	The last statement is readily derived applying the weak \MP\
	and Hopf's lemma on $(\partial \O_{r})\cap B_{r}$ (which is regular)
	to the function~$w$.	
\end{proof}

The final argument of the proof of Lemma~\ref{lem:mixed}
does not require the Lipschitz-regularity 
of $\O_r$, nor the $C^{2,\alpha}$ regularity of $\phi$. We indeed have the following.
\begin{lemma}\label{lem:monotone}
	Assume that~\eqref{ep-mixed} admits a supersolution
	$\phi\in W^{2,\infty}(\O_r(y))$ such that $\phi>0$ in $\ol \O_r(y)$.
	Then any subsolution $u$ of~\eqref{ep-mixed} satisfies $u\leq0$ in $\ol \O_r(y)$.
%
%
%
%
\end{lemma}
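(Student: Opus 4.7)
The plan is to reduce the statement to a comparison principle for $w := u/\phi$ by the classical logarithmic change of unknowns. Since $\phi>0$ on the compact set $\ol\O_r(y)$ we have $\inf_{\ol\O_r(y)}\phi>0$, so $w$ is well defined and inherits the regularity of $u$, and the transformed operator it satisfies has bounded coefficients thanks to $\phi\in W^{2,\infty}(\O_r(y))$. The only novelty with respect to the argument at the end of the proof of Lemma~\ref{lem:mixed} is that, since $\phi$ is only weakly twice differentiable, the interior inequality will hold merely a.e.\ and one must invoke the Aleksandrov-Bakel'man-Pucci / Bony versions of the strong maximum principle and of Hopf's lemma, rather than their classical $C^2$ formulations.

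I would first carry out the substitution. A direct computation gives
$$
\L u = \phi\bigl(\Tr(A\,D^2 w)+(b+2\phi^{-1}A\nabla\phi)\cdot\nabla w\bigr)+w\,\L\phi \quad\text{a.e.\ in }\O_r(y),
$$
and on $(\partial\O_r(y))\cap B_r(y)$, $\B u = \phi\,\beta\cdot\nabla w + w\,\B\phi$. Combining these identities with the inequalities $-\L u\le \lambda u$, $-\L\phi\ge\lambda\phi$, $\B u\le 0$ and $\B\phi\ge 0$, and dividing by $\phi$, I obtain
$$
-\t\L_0 w \le c^*(x)\,w \text{ a.e.\ in }\O_r(y),\qquad \beta\cdot\nabla w+\gamma^*(x)\,w\le 0 \text{ on }(\partial\O_r(y))\cap B_r(y),
$$
where $\t\L_0 w:=\Tr(A\,D^2 w)+(b+2\phi^{-1}A\nabla\phi)\cdot\nabla w$ has $L^\infty$ coefficients, the interior zeroth-order coefficient $c^*:=(\L\phi+\lambda\phi)/\phi$ is a.e.\ nonpositive, and the Robin coefficient $\gamma^*:=\B\phi/\phi$ is nonnegative. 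In addition, $w\le 0$ on $\partial\O_r(y)\cap\partial B_r(y)$.

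I would then argue by contradiction, supposing $M:=\sup_{\ol\O_r(y)}w>0$. By continuity and compactness this supremum is attained at some $x_0\in\ol\O_r(y)$, and $x_0\notin\partial\O_r(y)\cap\partial B_r(y)$ because $w\le 0$ there. If $x_0\in\O_r(y)$, Bony's strong maximum principle for strong solutions (applicable since $c^*\le 0$ and the coefficients of $\t\L_0$ are bounded) forces $w\equiv M$ on the connected component $\O_r(y)$, which by continuity contradicts $w\le 0$ on the nonempty Dirichlet part. If instead $x_0\in(\partial\O_r(y))\cap B_r(y)$, this point lies on a portion of $\partial\O_r(y)$ which inherits the smoothness of $\partial\O$; the Hopf boundary-point lemma for strong solutions then yields $\nu(x_0)\cdot\nabla w(x_0)>0$, and since $\beta\cdot\nu>0$, $\gamma^*\ge 0$ and $M>0$, this contradicts the Robin-type inequality.

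The main obstacle, and the reason this lemma is not already contained in Lemma~\ref{lem:mixed}, is that $\O_r(y)$ need not be Lipschitz: its boundary may be pathological on the corner set $\partial\O\cap\partial B_r(y)$. What saves the argument is the observation that this entire corner is contained in the Dirichlet part $\partial\O_r(y)\cap\partial B_r(y)$, where $w\le 0$ is imposed by assumption. Consequently a positive supremum of $w$ can only be realized in the interior or on the smooth Robin portion, and the above versions of the strong maximum principle and of Hopf's lemma suffice to rule both out, never requiring any regularity of $\O_r(y)$ at the bad corner — which is precisely why the property \eqref{rSard} needed for Lemma~\ref{lem:mixed} can be dispensed with here.
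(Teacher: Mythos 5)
Your proof is correct and follows essentially the same route the paper sketches (the paper gives no separate proof, only the remark that the final argument of Lemma~\ref{lem:mixed} applies: set $w=u/\phi$ and invoke the maximum principle plus Hopf's lemma on the smooth Robin portion). You make explicit two points the paper leaves implicit — that the $W^{2,\infty}$ regularity of $\phi$ forces one to use the strong-solution (Bony/ABP) versions of the strong maximum principle and Hopf's lemma, and that the irregular corner $\partial\O\cap\partial B_r$ is entirely contained in the Dirichlet portion where $w\le 0$ is given — which is precisely why no regularity of $\O_r$ at the corner is needed.
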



We now derive the existence of the \pe\ for a set of radii $r$ which is smaller than
the one provided by~\eqref{rSard}, but still exhausting $\R_+$ up to a zero measure set.
One possible strategy would be to apply the  
Krein-Rutman theory in the weak form of~\cite{Nus81} (see also~\cite{Bir95}), where the
strong positivity of the inverse operator is substantially relaxed.
It however requires the compactness of such operator, hence some global regularity 
estimates 
%
which are available in the literature only under some involved geometric hypotheses,
see~\cite{Lie89}.
We use a different, more direct approach, used in~\cite{BD06} in the context of fully nonlinear operators,
which requires some weaker estimates that we obtain here with the aid of barriers from \cite{Lie86}.

\begin{theorem}\label{thm:mixed}
	Let $y\in\O$. For a.e.~$r>0$, there exists a unique $\lambda\in\R$ for which
	the eigenvalue problem~\eqref{ep-mixed} 
	admits a classical solution $\vp$ which is positive in~$\ol\O_r\setminus\partial B_r$.
\end{theorem}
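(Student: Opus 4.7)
My plan is to follow the direct method used in~\cite{BD06}, adapted to the mixed boundary-value setting on the Lipschitz domain $\O_r:=\O_r(y)$ that is available, for a.e.~$r>0$, thanks to~\eqref{rSard}. I would define
$$\lambda^*:=\sup\bigl\{\lambda\in\R\,:\,\exists\phi>0\text{ in }\ol{\O_r},\ (\L+\lambda)\phi\leq0\text{ in }\O_r,\ \B\phi\geq0\text{ on }(\partial\O_r)\cap B_r\bigr\}.$$
A suitably chosen positive exponential provides a strict supersolution for $\lambda$ very negative, so $\lambda^*>-\infty$; comparing an admissible $\phi$ with the classical Dirichlet principal eigenfunction on any smooth ball compactly contained in $\O_r$ (via Lemma~\ref{lem:monotone}) yields $\lambda^*<+\infty$.

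For each $\lambda<\lambda^*$ the definition furnishes a positive strict supersolution, hence Lemma~\ref{lem:mixed} produces a unique solution $u_\lambda$ of $-(\L+\lambda)u_\lambda=1$ under the mixed conditions of~\eqref{ep-mixed}, strictly positive in $\ol{\O_r}\setminus\partial B_r$. The key claim is $M_\lambda:=\|u_\lambda\|_{L^\infty}\to+\infty$ as $\lambda\nearrow\lambda^*$. Were $M_\lambda$ bounded along a sequence, the uniform boundary H\"older estimates of~\cite{Lie86}---applicable thanks to the $\Sigma$-wedge condition ensured by~\eqref{rSard}---together with interior Schauder theory would extract a limit $u^*\geq0$ solving $-(\L+\lambda^*)u^*=1$. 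Then $\tilde\phi:=u^*+\delta\phi_0$, with $\phi_0$ any strict positive test function entering the supremum at some $\lambda_0<\lambda^*$, would be a strict positive supersolution at the level $\lambda^*+\e$ for $\delta,\e$ small enough, contradicting the maximality of $\lambda^*$.

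Setting $v_\lambda:=u_\lambda/M_\lambda$ (so $\|v_\lambda\|_\infty=1$ and $-(\L+\lambda)v_\lambda=1/M_\lambda\to 0$), the passage to the limit $\lambda\nearrow\lambda^*$ constitutes the main technical step. The obstacle is to secure a uniform modulus of continuity for $\{v_\lambda\}$ up to the corner set $\partial\O\cap\partial B_r$, where the boundary is only Lipschitz. I would resolve this via the boundary barriers of~\cite{Lie86}, constructed precisely under the $\Sigma$-wedge hypothesis~\eqref{rSard}, yielding a uniform $C^{0,\alpha}(\ol{\O_r})$ estimate; interior and Neumann-boundary Schauder bounds give $C^{2,\alpha}_{loc}$ control on $\ol{\O_r}\setminus\partial B_r$. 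By Arzel\`a--Ascoli a subsequence converges to some $\vp\geq0$ with $\|\vp\|_\infty=1$, solving~\eqref{ep-mixed} at $\lambda=\lambda^*$. The strong maximum principle together with Hopf's lemma on $(\partial\O_r)\cap B_r$ then force $\vp>0$ in $\ol{\O_r}\setminus\partial B_r$.

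Uniqueness is established by a standard sliding argument: if $\lambda_1<\lambda_2$ both admitted positive eigenfunctions $\vp_1,\vp_2$, then $t^*:=\sup_{\ol{\O_r}\setminus\partial B_r}(\vp_1/\vp_2)$ is finite thanks to Hopf's lemma applied to~$\vp_2$ at smooth points of $\partial B_r$; the function $W:=t^*\vp_2-\vp_1\geq 0$ would satisfy $(\L+\lambda_1)W=(\lambda_1-\lambda_2)t^*\vp_2\leq 0$ in $\O_r$ with $\B W=0$ on the Neumann part and $W=0$ on the Dirichlet part, so the strong maximum principle and Hopf's lemma would rule out any interior or Neumann touching point, forcing $W\equiv 0$ and hence the contradiction $\lambda_1=\lambda_2$.
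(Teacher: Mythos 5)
Your existence argument closely mirrors the paper's: characterise the eigenvalue as a supremum, solve the penalised problems $-(\L+\lambda)u=1$ via Lemma~\ref{lem:mixed}, show the sup-norm diverges as $\lambda\nearrow\lambda^*$, normalise, and pass to the limit using the corner barriers of~\cite{Lie86}. There is a technical slip here, though: you invoke Lemma~\ref{lem:mixed} directly in $\O_r$ with the test function $\phi$ coming from the definition of $\lambda^*$, but such $\phi$ only lies in $C^{2,\alpha}(\O_\rho)$ for $\rho<r$ and its positivity up to the closed boundary set $\partial\O_r\cap\partial B_r$ is not part of the definition. The paper deals with this by solving on a nested sequence of strictly smaller truncations $\O_{r_n}$, $r_n\nearrow r$, on which the test functions are positive on the whole closure by Hopf's lemma, and only afterwards passing to the limit $n\to\infty$.

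The genuine gap is in the uniqueness step. Your sliding argument requires both that $t^*:=\sup_{\ol\O_r\setminus\partial B_r}(\vp_1/\vp_2)$ be finite \emph{and} that it be attained inside $\ol\O_r\setminus\partial B_r$. Both eigenfunctions vanish on $\partial B_r$, and Hopf's lemma gives control on the quotient only near the smooth portion $\partial B_r\cap\O$. At the corner set $\partial\O\cap\partial B_r$, where the boundary is merely Lipschitz and the type of boundary condition switches from oblique derivative to Dirichlet, no Hopf-type statement is available and nothing prevents the supremum of $\vp_1/\vp_2$ from escaping to a corner. If that happens, $W:=t^*\vp_2-\vp_1$ never touches zero inside $\ol\O_r\setminus\partial B_r$ and the comparison collapses. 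This is precisely the obstruction the paper acknowledges (``We are not able to do it for every $r\in\mc R$''), and it is why the paper abandons the within-domain sliding method entirely: it instead exploits the monotonicity of $r\mapsto\lambda(r)$, restricts to the a.e.\ set of its continuity points, and compares eigenvalues across \emph{different} radii via Lemma~\ref{lem:monotone}, so that one always compares a positive eigenfunction on a strictly smaller truncation (where it is positive on the full closure) against a solution on a strictly larger one. Your proof would need either a boundary Harnack principle up to these Lipschitz corners for the mixed oblique/Dirichlet problem, or the paper's cross-radius trick, to close this hole.
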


\begin{proof}
	We can assume without loss of generality that $y=0$.
	Let $\mc{R}\subset\R_+$ be the set of $r$ for which~\eqref{rSard} holds; we know that
	$\R_+\setminus\mc{R}$ has zero Lebesgue measure. We shall derive the 
	existence of the \pe\ $\lambda(r)$ 
	for any $r\in\mc{R}$. 
%
%
	We shall further characterise it through
	the ``generalised" formulation
$$\lambda(r):=
\sup\{\lambda\ :\ \exists\phi>0,\ (\L+\lambda)\phi\leq0 \text{ in }
\O_r,\ \B\phi\geq0\text{ on }(\partial \O_r)\cap B_r \}.
$$
The functions $\phi$ in the above definition are understood to belong to
$C^{2,\alpha}(\O_\rho)$ for any $\rho<r$ 
(recall that $\bigcup_{0<\rho<r} \O_\rho(y)=\O_r$).
We emphasise that no condition on the truncated boundary
$\partial \O_r\cap\partial B_r$ is imposed, 
because the positivity of~$\phi$ already implies that it
is a supersolution with respect to the Dirichlet boundary condition.

	By the definition of $\lambda(r)$, there exists an increasing sequence $\seq{\lambda}$ 
	converging to
	$\lambda(r)$, with an associated sequence $\seq{\phi}$ of positive functions
	in $C^{2,\alpha}(\O_\rho)$ for any $\rho<r$, satisfying 
	$$(\L+\lambda_n)\phi_n\leq0 \;\text{ in }
	\O_r,\qquad \B\phi\geq0\;\text{ on }(\partial \O_r)\cap B_r.$$
	Next, take an increasing sequence $\seq{r}$ in $\mc{R}$ 
	converging to $r$.
	For $n\in\N$, we consider the problem
	$$\begin{cases}
	-(\L+\lambda_n)u=1 & \text{ in }\O_{r_n}\\
	\B u=0 & \text{ on } (\partial \O_{r_n})\cap B_{r_n}\\
	u=0 & \text{ on }\partial \O_{r_n}\cap \partial B_{r_n}.
	\end{cases}$$
	Observe that $\phi_n>0$ on $\ol\O_{r_n}$ by Hopf's lemma.
	Then, we can apply Lemma~\ref{lem:mixed} and infer the existence of a unique
	classical solution $u=u_n$ of the above problem, which is positive in 
	$\ol\O_{r_n}\setminus\partial B_{r_n}$.
	Suppose for a moment that (up to subsequences) 
	$(\max_{\ol\O_{r_n}}u_n)_{n\in\N}$ is bounded.
	Then, by \cite[Lemma 1]{Lie86} (a subsequence of) the sequence $\seq{u}$ converges
	in $C^{2,\beta}(\O_\rho)$, for any $\rho<r$ and fixed $\beta<\alpha$, to a nonnegative 
	solution~$\phi$~of
	$$-(\L+\lambda(r))\phi=1 \;\text{ in }
	\O_r,\qquad \B\phi=0\;\text{ on }(\partial \O_r)\cap B_r.$$
	The function $\phi$ actually belongs to $C^{2,\alpha}(\O_\rho)$ for any $\rho<r$,
	always by \cite[Lemma 1]{Lie86}.
	Furthermore, $\phi>0$ in $\O_r$ due to the \SMP, and we have that
	$-(\L+\lambda(r))\phi=1\geq\e\phi$ in $\O_r$, for some $\e>0$,
	contradicting the definition of $\lambda(r)$.
	Therefore, the sequence $(\max_{\ol\O_{r_n}}u_n)_{n\in\N}$ necessarily diverges.
	
	Define 
	$$v_n:=\frac{u_n}{\max_{\ol\O_{r_n}}u_n}.$$
	The sequence $\seq{v}$ converges locally uniformly (up to subsequences) 
	to a solution $0\leq\vp\leq1$ of 
	$$-(\L+\lambda(r))\vp=0 \;\text{ in }
	\O_r,\qquad \mc{B}\vp=0\;\text{ on }(\partial \O_r)\cap B_r.$$
	Furthermore, using a standard barrier, one infers that $\vp$ can be 
	continuously extended to~$0$ at all regular points of 
	$\partial \O_{r}\cap\partial B_{r}$.
	In order to handle the ``wedges''
	$\partial \O_{r}\cap\partial B_{r}\cap\partial\O$, we make use
	of the barrier provided by \cite[Lemma~2]{Lie86}.
	Similarly to the barrier of~\cite{Miller}, 
	this is a function $w$ which vanishes at a given point
	$\xi\in\partial \O_{r}\cap\partial B_{r}\cap\partial\O$,
	it is positive in 
	$\ol{\O_{r}\cap B_\rho(\xi)}\setminus\{\xi\}$ for some $\rho>0$, and satisfies
	$$-(\L+\lambda(r))w\geq1\;\text{ in }
	\O_r\cap B_\rho(\xi),$$
	but it additionally fulfils 
	$$\beta\.\nabla w\geq1 \;\text{ on }
	(\partial \O_r)\cap B_r\cap B_\rho(\xi).$$
	Take a large enough constant $k$ so that
	\Fi{kappa}
	kw\geq 1\ \text{on }\O_r\cap\partial B_\rho(\xi),\qquad
	k>\sup_\O c,\qquad k>-\inf\gamma.
	\Ff
	For $n\in\N$, let $x_n$ a point where $v_n-kw$ attains its maximum on 
	$\ol{\O_{r_n}\cap B_\rho(\xi)}$ and assume by contradiction that $(v_n-kw)(x_n)>0$.
	Observe that $x_n$ can neither belong to $\partial B_{r_n}$, because $v_n=0$ there, 
	nor to $\O_r\cap\partial B_\rho(\xi)$, where $kw\geq 1\geq v_n$.
	If $x_n\in \O_{r_n}\cap B_\rho(\xi)$
	then at such point there holds $D^2 v_n\leq D^2 kw$ (as matrices) and $\nabla v_n=\nabla kw$, 
	from which we derive
	$$-\frac1{\max_{\ol\O_{r_n}}u_n}+k\leq (\L+\lambda_n)(v_n-kw)(x_n)
	\leq c(x_n)(v_n-kw)(x_n)\leq\sup_\O c.$$
	The above left-hand side converges as $n\to\infty$ to $k$, which is larger than $\sup_\O c$
	by~\eqref{kappa}, hence 
	this case is ruled out for~$n$ sufficiently large.
	Therefore, the 
	only remaining possibility for $n$ large is that $x_n\in\partial\O$.
	In such case, recalling that $\beta$ points outside~$\O$, we find that
	$\beta\.\nabla(v_n-kw)(x_n)\geq0$, whence
	$$\B v_n(x_n)\geq k\beta\.\nabla w(x_n)+\gamma v_n(x_n)\geq k+\min\{\inf \gamma,0\},$$
	which is positive by~\eqref{kappa}, contradicting the boundary condition for $v_n$.
	This shows that $v_n\leq kw$ on $\ol{\O_{r_n}\cap B_\rho(\xi)}$ for $n$ sufficiently large
	and thus the function
	$\vp$ can be continuously extended to $0$ at $\xi$.
	
	Another consequence of the comparison with the barriers at every point of 
	$\partial \O_{r}\cap\partial B_{r}$ is that any sequence of points $\seq{y}$ for which
	$v_n(y_n)=1=\max v_n$, does not approach $\partial B_r$ as $n\to\infty$.
	This yields $\max\vp=1$ and thus $\vp>0$ in $\ol\O_r\setminus\partial B_r$ by the 		
	\SMP\ and Hopf's lemma. The function $\vp$ is thus a positive solution 
	to~\eqref{ep-mixed} with eigenvalue~$\lambda$. 
	
	To complete the proof, it remains to show the uniqueness of the \pe.
	We are not able to do it for every $r\in\mc{R}$. However, using 
	Lemma~\ref{lem:monotone}, 	we shall derive it a.e.~in $\mc{R}$.
	Observe that~$\lambda(r)$  is nonincreasing with respect to $r$ by its very definition.
	There exists then a subset $\t{\mc{R}}$ of $\mc{R}$ such that
	$\mc{R}\setminus\t{\mc{R}}$ has zero Lebesgue measure and 
	on which~$r\mapsto\lambda(r)$ is continuous, i.e., such that
	$$\forall r\in\t{\mc{R}},\quad
	\lambda(r)=\inf_{\mc{R}\ni \rho<r}\lambda(\rho)=\sup_{\mc{R}\ni \rho>r}\lambda(\rho).$$
	For fixed $r\in\t{\mc{R}}$, let $\lambda$ be an eigenvalue for~\eqref{ep-mixed} 
	with a positive eigenfunction. 
	Lemma~\ref{lem:monotone} implies that any eigenvalue for the mixed problem in 
	$\O_\rho$ with $\rho<r$ (resp.~$\rho>r$) 
	admitting a positive eigenfunction must be strictly larger (resp.~smaller)
	than $\lambda$.
	Therefore, the existence of the positive eigenfunctions associated with the
	$\l(\rho)$, $\rho\in\mc{R}$, proved above yields
	$$\forall\rho,\rho'\in\mc{R}\;\text{ with }\rho'<r<\rho,\quad
	\l(\rho')>\l>\l(\rho).$$
	We eventually deduce from the definition of $\t{\mc{R}}$ that $\lambda=\l(r)$.	
\end{proof}

In the sequel, the eigenvalue provided by Theorem~\ref{thm:mixed} will be 
called the \pe\ for~\eqref{ep-mixed} and will be denoted by $\lambda(y,r)$.
The associated positive solution will be called {\em a}
\pf\ for~\eqref{ep-mixed}. The simplicity of $\lambda(y,r)$ remains an open question.

We now show the convergence of $\l(y,r)$ towards~$\lB$.
\begin{proof}[Proof of \thm{lrinfty}]
Assume that $y=0$. We restrict ourselves to the values of~$r$ for which~\thm{mixed} applies.
This provides, for any of such values, a unique \pe~$\lambda(y,r)$ and a
principal eigenfunction~$\vp_r$ for the mixed 
problem~\eqref{ep-mixed}. We normalise the~$\vp_r$ by~$\vp_r(0)=1$.
In addition, Lemma~\ref{lem:monotone} implies that $\lambda(y,r)$ is
strictly decreasing with respect to $r$.
Next, by the definition of $\lB$, for any $\l<\lB$ there exists a positive supersolution $\phi$ 
of~\eqref{epB}. Applying again Lemma~\ref{lem:monotone} we get
$\l<\lambda(y,r)$ for (almost) every $r>0$, that is, $\lambda(y,r)\geq\lB$.
We can then define
$$\lambda:=\lim_{r\to\infty}\l(y,r)\geq\lB.$$
		
We now invoke the boundary Harnack inequality for the oblique derivative problem, which implies 
that for any $\rho>0$, there exists $C_\rho>0$ such that
$$\forall r>\rho+1,\quad
\sup_{\O_\rho}\vp_r\leq C_\rho\inf_{\O_\rho}\vp_r,$$
which in turn is smaller than $C_\rho\vp_r(0)=C_\rho$.
The above Harnack inequality is derived using a chaining argument in which one applies
the interior Harnack inequality on balls contained in $\O$ and the 
boundary Harnack inequality for the oblique derivative problem on balls intersecting $\partial\O$.
The latter follows combining the 
local maximum principle of \cite[Theorem~3.3]{Lie87}
with the weak Harnack inequality of \cite[Theorem~4.2]{Lie01}.
We emphasise that, unlike for Dirichlet problems, it is valid up to the boundary of~$\O$, 
as a consequence of the strict positivity of solutions
of the oblique derivative problem in the whole $\ol\O$.
The Harnack inequality, 
together with the local boundary estimate of \cite[Lemma~1]{Lie86}
and the last property in~\eqref{Oapprox}, 
imply that
(a subsequence of) the~$\vp_r$ converges in $C^2_{loc}(\ol\O)$ to a nonnegative
solution $\vp$ of the eigenvalue problem~\eqref{epB}. Because $\vp$ satisfies $\vp(0)=1$, 
the \SMP\ yields $\vp>0$ in $\O$. We deduce 
at the same time that $\lambda=\lB$, which concludes the proof of the theorem, as well as
the existence of a generalised \pf\ associated with~$\lB$.
Namely, we have proved~\thm{ef} in the case~$\lambda=\lB$.
\end{proof}

We can now derive~\thm{ef} from 
\thm{lrinfty} following the same lines as  in the proof 
of~\cite[Theorem 1.4]{BR4}. 

\begin{proof}[Proof of \thm{ef}]
We have already derived the result for~$\lambda=\lB$.
Take $\lambda<\lB$. Assume that $0\in\O$.
Consider an increasing, diverging sequence $\seq{r}$ 
for which the conclusions of Theorems~\ref{thm:lrinfty}, \ref{thm:mixed}
hold with $y=0$.
Since $\O$ is unbounded and connected,
$\O_{r_n}\backslash\ol B_{r_n-1}\neq\emptyset$ for all $n\in\N$. Let
$(g_n)_{n\in\N}$ be a family of smooth, nonpositive and not identically
equal to zero functions such that
$$\forall n\in\N,\quad \supp g_n\subset\O_{r_n}\backslash\ol B_{r_n-1}.$$
Consider the problems
	$$\begin{cases}
	-(\L+\l)u=g_n & \text{ in }\O_{r_n}\\
	\B u=0 & \text{ on } (\partial \O_{r_n})\cap B_{r_n}\\
	u=0 & \text{ on }\partial \O_{r_n}\cap\partial B_{r_n}.
	\end{cases}$$
The \pf\ associated with $\lambda(y,r_{n+1})$ is a supersolution for this problem with
$0$ in place of $g_n$, because
$\lambda(y,r_{n+1})>\lB>\lambda$ thanks to \thm{lrinfty}, and it is positive in~$\ol\O_{r_n}$. 
Hence, Lemma~\ref{lem:mixed} provides us with a positive
bounded solution $u=u_n$ of the problem with $f_n$.
We then define the sequence $\seq{v}$ by
$$v_n(x):=\frac{u_n(x)}{u_n(0)}.$$
Using the Harnack inequality and the local boundary estimates, 
exactly as in the proof of \thm{lrinfty},
we can extract a subsequence converging locally uniformly in~$\ol\O$
to a nonnegative solution $\vp$
of~\eqref{epB}. Moreover, $\vp(0)=1$ and therefore $\vp>0$ by the \SMP.
\end{proof}


\section{Generalised principal eigenvalue and stability}

In this section we study the link between the stability of the null state for~\eqref{BP} and
the different notions of generalised \pe\ defined in Sections~\ref{intro:lB},~\ref{intro:stability}.
Throughout this section, these notions are applied to the linearised operator around~$0$:
$$\L w:=\Tr(A(x)D^2 w)+b(x)\.\nabla w+f_s(x,0)w.$$
\begin{proposition}\label{pro:stability}
	The following properties hold:
	\begin{enumerate}[$(i)$]
	
	\item if $\lB^b>0$ then~$0$ is uniformly stable w.r.t.~compact
	perturbations; 
	
	\item if $\l_{\mc{B}}^{p,b}>0$ then~$0$ is uniformly stable;
	
	\item if~$0$ is locally stable then $\mu_{\mc{B}}^b\geq0$.
	
	\end{enumerate}
\end{proposition}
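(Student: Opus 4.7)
The plan is to tackle the three parts by constructing suitable sub- and supersolutions of~\eqref{BP} from the test functions defining $\lB^b$, $\lB^{p,b}$, $\mu_{\mc{B}}^b$, and then applying the parabolic comparison principle on the truncated problems in $\O_r$ from which $u$ is built by the procedure described after Definition~\ref{def:stable}.

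For (i) and (ii), I would pick $\lambda$ strictly below $\lB^b$ (resp.~$\lB^{p,b}$) together with a corresponding positive test function $\phi$ with $\sup_\O \phi < +\infty$ (and, in case (ii), also $\inf_\O \phi > 0$) satisfying $(\L + \lambda)\phi \leq 0$ in $\O$ and $\B\phi \geq 0$ on $\partial\O$. Setting $\bar u(t, x) := \epsilon e^{-\mu t}\phi(x)$ for some $\mu \in (0, \lambda)$, the test-function inequality yields
\[
\partial_t \bar u - \Tr(A D^2 \bar u) - b \cdot \nabla \bar u - f(x, \bar u) \;\geq\; (\lambda - \mu)\bar u - \bigl[f(x, \bar u) - f_s(x, 0)\bar u\bigr],
\]
and the bracket is $o(\bar u)$ as $\bar u \to 0$ uniformly in $x$, thanks to the $C^1$ regularity of $s \mapsto f(x, s)$ near~$0$. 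Hence there is a threshold $\eta > 0$ below which $\bar u$ is a supersolution and $-\bar u$ a subsolution of~\eqref{BP}, both meeting the boundary inequalities via $\B\phi \geq 0$. For~(ii), the choices $\delta := \eta \inf_\O \phi/\sup_\O \phi$ and $\epsilon := \|u_0\|_\infty/\inf_\O \phi$ simultaneously give $\bar u(0, \cdot) \geq |u_0|$ and $\bar u \leq \eta$, so comparison forces $|u(t, x)| \leq \epsilon e^{-\mu t}\sup_\O \phi \to 0$ uniformly in $x$. For~(i), the compact support $K$ of $u_0$ provides $\inf_K \phi > 0$; the analogous $\epsilon := \|u_0\|_\infty/\inf_K \phi$ gives $\bar u(0, \cdot) \geq |u_0|$, but the smallness condition $\bar u \leq \eta$ now only holds for $t$ beyond some $T_\epsilon := \mu^{-1}\log_+(\epsilon\sup_\O \phi/\eta)$. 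On $[0, T_\epsilon]$ the solution~$u$ is controlled by a local a priori bound coming from the Lipschitz continuity of~$f$ near~$0$, which matches with the supersolution comparison for $t \geq T_\epsilon$.

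For (iii), I argue by contradiction: if $\mu_{\mc{B}}^b < 0$, pick $\lambda \in (\mu_{\mc{B}}^b, 0)$ and a positive bounded $\phi$ with $(\L + \lambda)\phi \geq 0$ and $\B\phi \leq 0$. Mirror computations show that $v(t, x) := \epsilon e^{\mu t}\phi(x)$ (for $\mu \in (0, -\lambda)$) is a subsolution of~\eqref{BP} while $v \leq \eta'$ for some small $\eta' > 0$, and that $c\phi$ is a stationary subsolution for every $c \in (0, \eta'/\sup_\O \phi]$. Taking $u_0 := \epsilon\phi$ with $\epsilon$ small enough that $\|u_0\|_\infty < \delta$ (the constant from the definition of local stability), comparison gives $u(t, \cdot) \geq \epsilon e^{\mu t}\phi$ up to $T_\epsilon := \mu^{-1}\log(\eta'/(\epsilon \sup_\O \phi))$, hence $u(T_\epsilon, \cdot) \geq c_0 \phi$ with $c_0 := \eta'/\sup_\O \phi$ independent of $\epsilon$; the stationary subsolution $c_0\phi$ then propagates $u(t, x) \geq c_0\phi(x) > 0$ for every $t \geq T_\epsilon$, contradicting $u(t, x_0) \to 0$ at any fixed $x_0 \in \O$. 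The main obstacle is the matching step in~(i): Taylor expansion only yields the supersolution property at small amplitudes while $\epsilon$ can be made arbitrarily large by choosing $K$ where $\inf_K \phi$ is small, so two regimes must be glued via a local existence estimate for short times and the exponential-decay supersolution for long times.
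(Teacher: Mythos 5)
Your overall strategy (building sub- and supersolutions from the test functions defining $\lB^b$, $\lB^{p,b}$, $\mu_{\mc B}^b$ and invoking the parabolic comparison principle) is the same as the paper's, and parts $(ii)$ and $(iii)$ are in substance correct, though $(iii)$ is overcomplicated: the paper observes directly that $\e\phi$ is a stationary subsolution for $\e$ small, so the solution with initial datum $\e\phi$ stays $\geq\e\phi$ forever, which already contradicts local stability; your exponentially growing subsolution $v(t,x)=\e e^{\mu t}\phi(x)$ and the subsequent passage to $c_0\phi$ is an unnecessary detour.

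The real problem is the gap you yourself flag in part $(i)$. You try to produce a threshold $\delta$ that is independent of the compact set $K$ supporting the perturbation, pick $\epsilon=\|u_0\|_\infty/\inf_K\phi$ (which can be large when $\inf_K\phi$ is small), and then need to glue a short-time a priori estimate on $[0,T_\epsilon]$ to the supersolution comparison for $t\geq T_\epsilon$. The matching is not actually established: a crude local bound on $\|u(t,\cdot)\|_\infty$ does not give $u(T_\epsilon,\cdot)\leq\bar u(T_\epsilon,\cdot)$, which is what one would need to restart the comparison, and it is unclear that such a bound exists in the regime where $\bar u$ is not yet a supersolution. The obstacle is self-inflicted. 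In the definition of stability ``with respect to compact perturbations'', the threshold $\delta$ is allowed to depend on the compact set, and the paper's proof uses exactly this freedom: it first fixes $\e$ small enough so that $v^\e(t,x)=\e\phi(x)e^{-\lambda t/2}$ is a supersolution, and only then, for a given $R$, sets $\delta:=\e\min_{\ol\O\cap\ol B_R}\phi$ (positive by Hopf's lemma). With this order of choices, any admissible $u_0$ supported in $\ol\O\cap\ol B_R$ satisfies $u_0\leq v^\e(0,\cdot)$ from the start, so comparison applies on all of $[0,+\infty)$ and there is no matching to perform. The same order of choices also streamlines your part $(ii)$: fix $\e$ first, then take $\delta=\e\inf_\O\phi$.
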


\begin{proof}
	$(i)$\\
	Assume that $\lB^b>0$, namely, there exists $\lambda>0$ and a function $\phi$
	satisfying
	$$\phi>0 \inn\O,\qquad
	\sup_\O\phi<+\infty,\qquad(\L+\lambda)\phi\leq0 \inn\O,
	\qquad\B\phi\geq0\on\partial \O.$$
	For $\e>0$, we call $v^\e(t,x):=\e \phi(x) e^{-\frac\lambda2 t}$. This function satisfies
	$\B v^\e\geq0$ on~$\R_+\times\partial\O$, while, in $\R_+\times\O$,
	\begin{align*}
	\partial_t v^\e -
	\dv (A(x) \nabla v^\e)-b(x)\.\nabla v^\e-f(x,v^\e)
	&\geq \frac\lambda2 v^\e+f_s(x,0)v^\e-f(x,v^\e)\\
	&=\Big(\frac\lambda2 +f_s(x,0)-f_s(x,s(t,x)v^\e)\Big)v^\e,
	\end{align*}
	for some $s(t,x)\in(0,1)$. 
	It then follows from the boundedness of $\phi$ and
	the regularity assumption on $f$, that~$v^\e$ is a supersolution 
	to~\eqref{BP} provided $\e$ is sufficiently small.
	We choose $\e$ so that this is the case, then, for given $R>0$, we set 
	$$\delta:=\e\min_{\ol\O\cap\ol B_R}\phi,$$
	which is positive because $\phi>0$ on $\ol\O$ by Hopf's lemma.
	Thus, by the comparison principle,
	any solution~$u$ to~\eqref{BP} with an initial datum
	$u_0$ supported in $\ol\O\cap\ol B_R$ and satisfying
	$\|u_0\|_\infty\leq\delta$, is bounded from above
	by $v_\e$. Likewise,  
	$-v^\e$ is a subsolution to~\eqref{BP} and thus $u\geq-v^\e$. As a consequence,
	$u$ converges uniformly to $0$ as $t\to+\infty$.
	
	\medskip
	$(ii)$\\
	Test functions for $\lB^{p,b}$ differ from those for $\lB^b>0$ by the additional requirement
	$\inf_\O\phi>0$. Thus, proceeding as in the case of~$(i)$ we infer that the uniform convergence
	to $0$ as $t\to+\infty$ holds true for any initial datum $u_0$ satisfying
	$\|u_0\|_\infty\leq\e\inf_\O\phi$.
%
	
	\medskip
	$(iii)$\\
	Assume that $\mu_{\mc{B}}^b<0$. Then, there exists $\lambda<0$ and
	a positive function $\phi$ such that
	$$\sup_\O\phi<+\infty,\qquad\L\phi\geq\lambda\phi \inn\O,
	\qquad\B\phi\leq0\on\partial \O.$$
	For $\e>0$, the function $\e\phi$ satisfies
	$$-\dv \big(A(x) \nabla(\e\phi)\big)-b(x)\.\nabla (\e\phi)
	\leq \big(f_s(x,0)-\lambda\big)\e\phi,$$
	which, by the regularity of $f$,
	is smaller than $f(x,\e\phi)$ provided $\e$ is sufficiently small.
	Hence, for such values of $\e$, $\e\phi$ is a stationary subsolution 
	to~\eqref{BP} and therefore, by comparison, the solution having $\e\phi$ as initial datum
	remains larger than $\e\phi$ for all $t>0$ (it may also blow up in finite time). 
	Since $\phi$ is bounded, we deduce that $0$ is not locally stable.	 
\end{proof}

Next, we investigate the repulsion property, see Definition~\ref{def:repulsive}.
\begin{proposition}\label{pro:locrepulsive}
	If $\lB<0$ then $0$ is locally repulsive.
\end{proposition}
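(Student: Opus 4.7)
The strategy is to construct a positive stationary subsolution of~\eqref{BP} supported in a truncated domain $\O_r(y)$ and apply parabolic comparison to show that the solution $u$ stays bounded below by it. Fix $y\in\O$. Since $\lB<0$, \thm{lrinfty} provides some $r>0$ in the a.e.~set from~\thm{mixed} with $\lambda(y,r)<0$; let $\vp$ be the associated \pf\ of~\eqref{ep-mixed}, positive in $\ol\O_r(y)\setminus\partial B_r(y)$, normalised by $\sup\vp=1$.

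Next, for $\e>0$ small enough, I would check that $\e\vp$ is a stationary subsolution of~\eqref{BP} on $\O_r(y)$. Using $\L\vp=-\lambda(y,r)\vp$ one finds
$$-\Tr(A(x)D^2(\e\vp))-b(x)\.\nabla(\e\vp)-f(x,\e\vp)=\e\lambda(y,r)\vp-\bigl[f(x,\e\vp)-f_s(x,0)\e\vp\bigr],$$
and the bracket is $o(\e)$ uniformly in $x$ by the $C^1$ regularity of $s\mapsto f(x,s)$ at $0$ (and boundedness of $\vp$), so the right-hand side is $\le\tfrac12\e\lambda(y,r)\vp\le 0$ for small $\e$; meanwhile $\B(\e\vp)=0$ on $(\partial\O_r(y))\cap B_r(y)$ by construction, while $\e\vp=0$ on the remaining boundary portion. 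This mirrors the computation in Proposition~\ref{pro:stability}$(i)$, the opposite sign of the eigenvalue turning a supersolution into a subsolution.

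Let $u$ solve~\eqref{BP} with $u_0\geq0$, $u_0\not\equiv0$. The parabolic \SMP\ and Hopf's lemma applied to the linear equation satisfied by $u$ (with bounded coefficient $f(x,u)/u$ on $[0,t_0]\times\ol\O_r(y)$) give $u(t_0,\cdot)>0$ on the compact set $\ol\O_r(y)$ for any fixed $t_0>0$; one may then decrease $\e$ further to ensure $\e\vp\le u(t_0,\cdot)$ on $\ol\O_r(y)$. Parabolic comparison on $[t_0,+\infty)\times\O_r(y)$ then yields $u(t,x)\ge\e\vp(x)$ for all $t\ge t_0$ and $x\in\ol\O_r(y)$: the oblique condition matches on $(\partial\O_r(y))\cap B_r(y)$, while on the artificial Dirichlet portion $\partial\O_r(y)\cap\partial B_r(y)$ one uses $u\ge 0=\e\vp$. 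Letting $t\to+\infty$ gives $u_\infty(y)\ge\e\vp(y)>0$; since $y$ was an arbitrary point of $\O$, and the same reasoning applies to $x_0\in\partial\O$ by choosing an interior $y$ with $x_0\in\ol\O_r(y)\cap B_r(y)$ (so that $\vp(x_0)>0$ by Hopf's lemma), we conclude $u_\infty>0$ in $\ol\O$.

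The main delicate step is the parabolic comparison on $\O_r(y)$, due to the potentially irregular wedge along $\partial\O_r(y)\cap\partial B_r(y)\cap\partial\O$. This is handled exactly as in the elliptic comparison of Lemma~\ref{lem:mixed} via the solvability theory of~\cite{Lie86} (recalling that $\O\cap B_r(y)$ is Lipschitz for a.e.~$r$); alternatively one may first carry out the comparison at the level of the approximating truncated Cauchy problems used to \emph{define} $u$ in $\O_R$, where standard parabolic theory on Lipschitz domains applies, and then pass to the limit $R\to+\infty$.
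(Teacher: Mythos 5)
Your argument is correct and follows essentially the same route as the paper: invoke \thm{lrinfty} to obtain a truncated radius $r$ with $\lambda(y,r)<0$, turn $\e\vp_r$ into a stationary subsolution of~\eqref{BP} in $\O_r(y)$ for small $\e$, use the parabolic strong maximum principle and Hopf's lemma to get $u(t_0,\cdot)>0$, and then compare on $\O_r(y)$. The only difference is that you spell out the subsolution computation and the treatment of the wedge explicitly, whereas the paper delegates the former to the proof of Proposition~\ref{pro:stability} and takes the parabolic comparison in the Lipschitz truncation for granted.
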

	
\begin{proof}
	By \thm{lrinfty}, for any fixed $y\in\O$,
	there exists $r$ large enough for which $\lambda(y,r)<0$. 
	Up to translation, we can take $y=0$.
	Let $\vp_r$ be the \pf\ associated with $\lambda(0,r)$.
	The same arguments as in the proof of Proposition~\ref{pro:stability} show that 
	$\e\vp_r$ is a subsolution to the first equation of~\eqref{BP} 
	in $\O_r $ provided $\e>0$ is sufficiently small.
	It further satisfies $\B=0$ on $(\partial \O_r)\cap B_r$ and
	it vanishes on $\partial \O_r\cap\partial B_r $. 
	Consider a solution $u$ to~\eqref{BP} with an arbitrary initial datum
	$u_0\geq0,\not\equiv0$.
	The parabolic \SMP\ and Hopf's lemma yield $u(t,x)>0$ for all $t>0$,
	$x\in\ol\O$.
	We can therefore find $\e$ small enough, depending on $r$,
	such that $\e\vp_r< u(1,\cdot)$ 
	in $\O_r $. Applying the parabolic comparison principle in this set
	we deduce that the inequality $u(t,\cdot)>\e\vp_r$ holds true in $\O_r $
	for all $t>1$. As a consequence, for $x\in\O_r$, we find that
	$$u_\infty(x):=\liminf_{t\to+\infty}u(t,x)\geq\e\vp_r(x)>0.$$
	Owing to the second property in~\eqref{Oapprox}, this proves $0$ is locally repulsive.
\end{proof}

We point out that Propositions~\ref{pro:stability} and~\ref{pro:locrepulsive}, 
with a minor modification in the proof of the latter and in the first statement of the former,
hold true in the Dirichlet case $\B u=u$ 
(relaxing the local repulsion by $u_\infty>\bar u$
in $\O$).
	
\begin{proof}[Proof of Table~\ref{tab:stability}]
	All implications in the table follow from Propositions~\ref{pro:stability} and~\ref{pro:locrepulsive},
	with the exception of the second implications of the first and third line.
	These implications directly follow from the others by noticing that 
	the local stability with respect to compact perturbations prevents 
	the local repulsion.
\end{proof}

%
%
	
\begin{proposition}\label{pro:ce-stability}
	The following hold:
	\begin{enumerate}[$(i)$]
		\item $(\lB\geq)\;\lB^p>0$ does not imply the local stability  w.r.t.~compact perturbations;
		\item $\lB^p>0$ does not imply the uniform stability  w.r.t.~compact 
			perturbations, even when $f$ is linear or $f_{ss}<0$;
		\item $\lB^b>0$ does not imply the local stability, even when $f$ is linear or $f_{ss}<0$;
		\item the local stability  w.r.t.~compact perturbations does not imply $\mu_\B^b\geq0$;
		\item neither $\lB^b<0$ nor $\lB^p<0$ imply the local repulsion.
	\end{enumerate}
\end{proposition}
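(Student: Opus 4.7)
Each of the five items requires an independent counterexample. My plan is to construct them all in a simple unbounded framework --- $\O = \R$, $\R^d$, or a half-line with Neumann condition --- with a linear operator $\L w = \Delta w + b(x)\cdot\nabla w + c(x) w$ tuned so as to create a strict separation between two of $\lB^{p,b}$, $\lB^p$, $\lB^b$, $\lB$, $\mu_\B^b$. The unifying observation is that the positive implications in Table~\ref{tab:stability} are all proved by building the supersolution $v^\e = \e\phi(x)e^{-\lambda t/2}$ from a test function $\phi$ of a specific class (bounded, bounded-below, both, or neither), and the counterexample is designed so that the relevant sign hypothesis is witnessed only by test functions in a class other than the one required to close the barrier argument.

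For $(i)$ and $(ii)$, I would take $\lB^p > 0$ realised by an unbounded $\phi$ with $\inf\phi > 0$: the barrier $\e\phi$ is then unbounded, the local $C^1$-regularity of $f$ at $0$ no longer forces $v^\e$ to be a supersolution at infinity, and a compactly supported perturbation can escape into the region where $\phi$ is large. For $(iii)$, the dual mechanism applies: $\lB^b > 0$ is realised by a bounded $\phi$ with $\inf\phi = 0$, so that the threshold $\delta = \e\inf\phi$ in the barrier construction vanishes and no uniformly small perturbation is dominated. Both situations can be produced by a drifted Laplacian on $\R$ whose optimal test function is an exponential $e^{kx}$; the variants with linear $f$ or $f_{ss}<0$ required by $(ii)$ and $(iii)$ are obtained by adding a Fisher--KPP-type nonlinearity on top, which does not affect the linear analysis. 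Item $(iv)$ is derived from the $(iii)$-construction by adding a drift: the non-self-adjoint character breaks the identity $\mu_\B^b = \lB$ of Theorem~\ref{thm:relations}$(iii)$, so that $\mu_\B^b < 0$ becomes attainable while $\lB^b > 0$ preserves local stability with respect to compact perturbations.

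For $(v)$, the drifted Laplacian $\L w = w'' + w' + \frac{1}{8}w$ on $\O = \R$ realises $\lB^b = \lB^p = -\frac{1}{8}$ (via $\phi = 1$) while $\lB = \frac{1}{8}$ (via the unbounded test function $\phi = e^{-x/2}$); the explicit heat-with-drift kernel shows that a compactly supported initial datum propagates as a Gaussian with amplitude decaying at every fixed $x$, so $0$ is not locally repulsive despite $\lB^b < 0$ and $\lB^p < 0$. The main obstacle throughout is the sharp computation of the five eigenvalue quantities in each example and the explicit construction of a sub/supersolution witnessing the failure. In the drifted-Laplacian examples this reduces to an elementary optimisation over exponential test functions together with explicit parabolic kernels, whereas in the potential-well examples invoked for $(i)$--$(iv)$ it relies on the approximation by truncated-domain eigenvalues from Theorem~\ref{thm:lrinfty} and on Harnack-type bounds to rule out improvements of the test functions within the prescribed class.
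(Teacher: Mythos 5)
Your overall strategy --- drifted operators on $\R$ chosen so that the favourable test functions live outside the class required for the barrier argument --- is the right one, and your explicit example for $(v)$ is essentially correct. However, two concrete issues keep this from closing.

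First, the mechanism you describe for $(i)$ would at best produce a counterexample to \emph{uniform} stability, not to \emph{local} stability. If $\lB^p>0$ is witnessed by an unbounded $\phi$, the solution started from a compactly supported datum can indeed become large far out, but for the linear drift--diffusion equation it still converges to $0$ \emph{locally uniformly} (the mass drifts away), which is exactly why the paper's item $(ii)$ is strictly weaker than $(i)$. To get failure of local stability you need the escaped mass to propagate back against the drift and lift the solution near the origin; the paper achieves this by superimposing a bistable reaction term with spreading speed larger than the drift and invoking the spreading results of Aronson--Weinberger, concluding $\liminf_{t\to\infty}u(t,x)\ge 1$ at every fixed $x$. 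Your heuristic ``the local $C^1$-regularity of $f$ at $0$ no longer forces $v^\e$ to be a supersolution at infinity'' does not by itself break local stability.

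Second, in your example for $(v)$ the test function $\phi\equiv 1$ only gives the \emph{lower} bounds $\lB^b\ge -\tfrac18$ and $\lB^p\ge -\tfrac18$; the claim $\lB^b=-\tfrac18$ (hence $\lB^b<0$) needs a matching upper bound, and this is genuinely nontrivial. Any positive bounded $\phi$ with $(\L+\lambda)\phi\le 0$ and $\lambda>-\tfrac18$ must, after the substitution $\tilde\phi(x)=e^{x/2}\phi(x)$, satisfy $\tilde\phi''\le\kappa^2\tilde\phi$ with $\kappa=\sqrt{1/4-(1/8+\lambda)}<1/2$, and one has to rule out the possibility that $\tilde\phi$ decays faster than $e^{-\kappa|x|}$ in one direction so that $\phi$ stays bounded. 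The paper explicitly invokes a Landis-type unique-continuation-at-infinity theorem at precisely this point. Without that ingredient your computation establishes only $\lB\ge\tfrac18$ and $\lB^p\le\mu_\B^b\le-\tfrac18$, not $\lB^b<0$, so item $(v)$ is not yet proved. (Your explicit kernel computation showing $u(t,x)\to0$ at fixed $x$, and hence failure of local repulsion, is correct.)

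Finally, for $(iv)$ your sketch (``add a drift to the $(iii)$-construction'') does not pin down the decisive feature, which is that the drift must exceed the KPP spreading speed so that a compactly supported perturbation is washed away locally while $\phi\equiv1$ forces $\mu_\B^b<0$. This is exactly the paper's example $\partial_t u=u_{xx}+3u_x+u(1-u)$, and stating that comparison explicitly would make your outline complete for that item.
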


\begin{proof}
	We start with $(ii)$.\\
	Consider the linear operator 
	$$
	\L w:=
	w''-2w'+c(x)w\ \ \inn\R,
	$$
	with $c$ nondecreasing and satisfying
	$$c(x):=\begin{cases}
	-\frac12 &\text{if }x\leq0\\
	\frac12 &\text{if }x\geq1.
	\end{cases}$$
	Explicit computation shows that the function $\phi(x):=e^x+\frac14$ satisfies
	$\L\phi\leq-\frac14\phi$ in~$\R$, whence $\lB^p\geq1/4$.
	Let $u$ be a solution of $\partial_t u=\L u$ for $t>0$, $x\in\R$,
	with a compactly supported initial datum $u_0\geq0,\not\equiv0$.
	The function $\t u(t,x):=u(t,x+2t)$ satisfies
	$$
	\partial_t \t u=\partial_{xx}\t u+\frac12\t u,\quad t>0,\ x>-2t+1.
	$$
	It is clear that such solution $\t u$ tends to $+\infty$ as $t\to+\infty$,
	locally uniformly in space.	
	If we instead considered the $KPP$
	equation $\partial_t u=\L u-\frac12u^2$, 
	for which $f_{ss}=-1$ and~$\L$ is the linearised operator, 
	we would find that $\t u\to1$ as $t\to+\infty$ locally uniformly in space.
	In both cases, $u$ does not converge uniformly to $0$.

$(i)$\\
	We seek for a counter-example in the form
	\Fi{evolfc}
	\partial_t u=\partial_{xx}u-2\partial_{x}u+f(u)+\Big(c(x)+\frac12\Big)u,\quad t>0,\ x\in\R,
	\Ff
	where $c$ is as in the proof of~$(ii)$.
	As for the nonlinearity $f:[0,1]\to\R$, we take it of the bistable type, in 
	the sense of~\cite{AW}, satisfying 
	$$\int_0^1 f>0,\qquad f'(0)=-\frac12,\qquad f(s)>-\frac s2\ \text{ for }s>0.$$
	It follows from~\cite{AW} that the
	asymptotic speed of spreading for the equation
	\Fi{RDv}
	\partial_t v=\partial_{xx}v+f(v),\quad t>0,\ x\in\R,
	\Ff  
	is positive, and, up to increasing $f$, we can assume that it is larger than~$3$.
	Namely, there exists $R$ sufficiently large such that any solution
	emerging from an initial datum $v_0\geq\1_{B_R}$ satisfies
	$$\lim_{t\to+\infty}\Big(\min_{|x|\leq 3t}v(t,x)\Big)=1.$$
	The linearised operator around the null state for~\eqref{evolfc} is the same $\L$ as
	in the proof of~$(ii)$, for which we have seen that $\lB^p\geq1/4$.
	Let $u$ be a solution to~\eqref{evolfc} with a compactly supported initial datum $u_0\geq0,\not\equiv0$.
	Because $f(s)\geq-s/2$, this is a supersolution of $\partial_t u=\L u$ and therefore, as seen before,
	the function $\t u(t,x):=u(t,x+2t)$ tends to 
	$+\infty$ as $t\to+\infty$ locally uniformly in $x$.
	On the other hand, $\t u$ is a supersolution to~\eqref{RDv}, whence
	the comparison with a solution $v$ with initial datum~$\1_{B_R}$ yields
	$$\liminf_{t\to+\infty}\Big(\min_{|x|\leq 3t}\t u(t,x)\Big)\geq1.$$
	From this we eventually derive
	$$\forall x\in\R,\quad
	\liminf_{t\to+\infty}u(t,x)\geq1.$$
	This means that $0$ is not locally stable.
	
	$(iii)$\\
	Consider the operator
	$$\L u=u''+k u'+c(x)u,$$
	with $k$ to be chosen and $c$ smooth and satisfying
	$$c(x):=\begin{cases}
	-1 &\text{if }x<-1\\
	1 &\text{if }x\geq0.
	\end{cases}$$
	Direct computation shows that the decreasing
	function $\phi(x):=(1+e^x)^{-1}$ satisfies 
	$$\phi''<0\for{x<0},\qquad
	\phi''<-\phi'\for{x>0},\qquad
	\phi<2\phi'\for{x>0}.$$
	It follows that, for $k>0$ large enough, $(\L+1)\phi\leq0$ in $\R$, whence 
	$\lB^b\geq1$.
	On the other hand, the increasing function $\psi(x):=(1+e^{-x})^{-1}$ satisfies
	$$\psi''>0\for{x<0},\qquad
	\psi<2\psi'\for{x<0},\qquad
	-\psi''<\psi'\for{x>0}.$$
	Therefore, for $k>0$ large enough, $(\L-1)\psi\leq0$ in $\R$. This means that
	$\mu_\B^b\leq-1$ and thus $0$ is not locally stable for the problem
	$\partial_t u=\L u$ in $\R$ owing to Table~\ref{tab:stability}.
	The same is true for the equation $\partial_t u=\L u-u^2$.
%
	
	$(iv)$\\
	A counter-example is given by the equation 
	$$
	\partial_t u=\partial_{xx}u+3\partial_{x}u+u(1-u),\quad t>0,\ x\in\R,
	$$
	which is simply the Fisher-KPP equation in the moving frame with speed $4$.
	Namely, the solution is $u(t,x)=v(t,x+3t)$ with $v$ satisfying the standard 
	Fisher-KPP equation. According to~\cite{KPP,AW}, the spreading speed for the latter is $2$,
	which implies that if the initial datum for $u$ (which is the same as for $v$)
	is nonnegative and compactly supported then
	$u(t,x)=v(t,x+3t)\to0$ as $t\to+\infty$, locally uniformly in $x$. 
	Hence~$0$ is locally stable with respect to compact perturbations.
	On the other hand, the linearised operator $\L u=u''+4u'+u$ in $\R$
	satisfies $\mu_\B^b\leq-1$, as it is immediately seen
	taking $\phi\equiv1$ in the definition.
	
	$(v)$\\
	The counter example is the same as in $(iv)$.
	We have seen that the null state is locally stable with respect to compact perturbations,
	whence it is not locally repulsive.
	Moreover, one can show that the linearised operator $\L$
	satisfies  $\lB^p=-1$. This can be deduced from \thm{relations}$(iv)$
	and then taking $\phi=1$ in the definitions of $\lB^p$ and $\mu_\B^b$.
	In order to estimate $\lB^b$, we consider a positive function $\phi$ such that
	$(\L+\lambda)\phi\leq0$ in $\R$ for some $\lambda\in\R$. 
	Then $\t\phi(x):=\phi(x)e^{2x}$ is positive and satisfies
	$\t\phi''\leq(3-\lambda)\t\phi$. This clearly implies that $\lambda\leq3$
	and moreover, as a consequence for instance of the Landis conjecture
	(which holds in this case, see \cite[Theorem~1.4]{Landis}),
	that $\t\phi(x)e^{\kappa|x|}\to+\infty$ as $|x|\to\infty$,
	for every $\kappa>\sqrt{3-\lambda}$. It follows that $\phi(x)\to+\infty$ as $x\to-\infty$
	if $\lambda>-1$, whence $\lB^b\leq-1$.
	\end{proof}

%

We pass now to the sufficient condition for the uniform repulsion.
\begin{proof}[Proof of \thm{LambdaB}]
	Let $u$ be a solution to~\eqref{BP}
	with a nonnegative initial datum $u_0\not\equiv0$.
	First of all, we reduce to the case where $u$ is bounded by changing the initial datum 
	into $\max\{u_0,1\}$ and the nonlinearity into $f(x,s)-k s^2$. Indeed, in such case,
	for $k$ sufficiently large, 
	the corresponding solution $\t u$ satisfies $0\leq \t u\leq\min\{1,u\}$ thanks to the comparison principle,
	hence a lower bound for $\t u$ entails the same bound for~$u$. 
	We then assume without loss of generality that $u$ is bounded.
	The function 
	$$u_\infty(x):=\liminf_{t\to+\infty}u(t,x)$$
	is well defined for all $x\in\ol\O$.
	
	We proceed in three steps: 
	we first show that $u_\infty$ is larger than some positive constant on a relatively dense set, 
	next, we derive a simple geometrical property about coverings of bounded sets which allows us,
	in the last step,
	to extend the lower bound for $u$ to the whole $\O$ using the Harnack inequality
	and a chaining argument.
	
	\step{1} {\em Lower bound on a relatively dense set.}\\
	By hypothesis, $\LB<0$. Take $\LB<\lambda<0$. The definition of $\LB$ and \thm{lrinfty} provide us with a radius
	$R>1$ such that, for any $y\in\O$, there exists $R-1<r_y<R$ for which 
	the problem~\eqref{ep-mixed} admits a principal eigenvalue satisfying
	$\l(y,r_y)\leq \lambda$.
	Let us call $\varphi^y$ the principal eigenfunction associated with~$\l(y,r_y)$,
	normalised by $\|\varphi^y\|_{L^\infty(\O_{r_y}(y))}=1$. Next, because $f(x,0)=0$ and
	$s\mapsto f(x,s)$ is of class~$C^1$ in some interval $[0,\e]$,
	uniformly with respect to~$x$, there holds that
	$$\forall x\in\O,\ s\in(0,\e],\quad f(x,s)\geq \bigg(f_s(x,0)+\frac\l2\Bigg)s.$$
	Consider a smooth function $\sigma:\R\to\R$ which is increasing and satisfies 
	$$\sigma(-\infty)=-\infty,\qquad
	\sigma(+\infty)=0,\qquad
	\sigma'\leq-\frac\lambda2\inn\R.$$
	Then define $v^y(t,x):=\e\vp^y(x)e^{\sigma(t)}$. This function 
	satisfies
	$$\partial_t v^y-\dv \big(A(x) \nabla v^y\big)-b(x)\.\nabla v^y\leq
	\bigg(f_s(x,0)+\frac\l2\Bigg)v^y\leq f(x,v^y)\inn\R\times\O_{r_y}(y),$$
	together with 
	$$\B v^y=0\on \R\times\big((\partial \O_{r_y}(y))\cap B_{r_y}(y) \big),\qquad 
	v^y=0\on\R\times\big(\partial \O_{r_y}(y)\cap\partial B_{r_y}(y)\big).$$
	On the other hand, $u$ is a supersolution of this mixed problem.
	Moreover, because $u(t,x)>0$ for all $t>0$ and $x\in\ol\O$, as a consequence of 
	the parabolic strong maximum principle and Hopf's lemma, for any $y>0$, 
	we can find $\tau_y\in\R$ such that $v^y(\tau_y,x)\leq u(1,x)$ for all
	$x\in\ol\O_{r_y}(y)$. 
	We can therefore apply the comparison principle in this set and deduce in particular that
	$$\forall x\in\O_{r_y}(y),\quad
	u_\infty(x)\geq \lim_{t\to+\infty}v^y(t,x)=\e\vp^y(x).$$
	Recalling that $r_y<R$ and that $\sup_{\O_{r_y}(y)}\vp^y=1$, we eventually derive
	\Fi{lbreldense}
	\forall y\in\O,\quad
	\sup_{\O_R(y)} u_\infty\geq\e.
	\Ff
	
	\step{2} {\em The $r$-{\em internal covering number} of a set $E\subset B_R\subset \R^d$ 
		is controlled by a constant only depending on $r,R,d$.}\\
	The $r$-internal covering number of $E$ is the minimum cardinality of $Q\subset E$
	such that $E\subset \bigcup_{a\in Q}B_r(a)$.
	Then we need to show that there exists a set of (not necessarily distinct) points 
	$\{x_1,\dots,x_n\}\subset E$, with $n$ only depending on $r,R,d$, such that 
	$$E\subset \bigcup_{j=1}^{n} B_r(x_j).$$
	Consider the set of points
	$$\mc{Z}:=\Big(\frac r{2\sqrt{d}}\Z\Big)^d\cap B_R.$$
	It follows that 
	$$B_R\subset \bigcup_{z\in\mc{Z}} B_{\frac r2}(z).$$
	For $z\in\mc{Z}$ such that $B_{\frac r2}(z)\cap E\neq\emptyset$, we choose a point in 
	$B_{\frac r2}(z)\cap E$
	and we call it~$x(z)$. Then the family of balls 
	$$\{B_r(x(z))\ :\ z\in\mc{Z},\ B_{\frac r2}(z)\cap E\neq\emptyset\}$$
	is a covering of $E$. This step is proved, because the number of elements of $\mc{Z}$ 
	only depends on $r,R,d$.
		
	\step{3} {\em Uniform lower bound.}\\
	Let $\rho$ be the 
	radius provided by the uniform $C^2$ regularity of~$\partial\O$ (see Section~\ref{sec:main})
	and call $r:=\rho/3$.
	Consider then $y\in\O$.
	Applying step 2 with $E=\O_R(y)$ we find a set $\{x_1,\dots,x_n\}\subset \O_R(y)$, 
	with $n$ depending on $r,R,d$, such that 
	$$\O_R(y)\subset \bigcup_{j=1}^{n} B_r(x_j).$$
	Up to a permutation, we can reduce to the case where 
	\Fi{supOR}
	\sup_{B_r(x_1)\cap\O}u_\infty\geq\sup_{\O_R(y)}u_\infty,
	\Ff
	and, in addition, the set
	\Fi{connected}
	\bigcup_{j=1}^{i} \big(B_r(x_j)\cap\O\big)
	\Ff
	is connected for every $i=1,\dots,n$.
	The latter property 
	is deduced from the fact that~$\O_R(y)$ is connected.
	We now apply the parabolic Harnack 
	inequality, 
	which is possible because
	$f(x,u)$ can be written as $\t c(x)u$ with $\t c\in L^\infty(\O)$ (recall that
	$u$ is bounded and $s\mapsto f(x,s)$ is locally Lipschitz continuous, uniformly in $x$).
	For the values of $j$ such that $B_{2r}(x_j)\subset\O$, we apply the interior Harnack 
	inequality (see, e.g., \cite[Corollary 7.42]{Lie}), which gives us
	a constant~$C\in(0,1)$, only depending on $r$ as well as the ellipticity constants and the $L^\infty$
	norms of the coefficients, such that
	$$\forall t>0,\quad
	\inf_{x\in B_r(x_j)}u(t,x)\geq C\sup_{x\in B_r(x_j)}u(t+1,x).$$
	The same 
	conclusion holds true restricted to $B_r(x_j)\cap\O$ when
	$B_{2r}(x_j)$ intersects $\partial\O$, thanks to the boundary Harnack inequality
	for the oblique derivative problem, c.f., 
	\cite[Theorem~7.5]{Lie01} and \cite[Theorem~7.48]{Lie}, 
	with $C$ also depending on $\rho,\|\Psi\|_{C^2}$ provided by the
	uniform regularity of $\partial\O$ 
	(observe that $B_r(x_j)$ is contained in a ball
	of radius $\rho$ centred at some point on $\partial\O$).
	Then, in any case, we infer that
	$$\inf_{B_r(x_j)\cap\O}u_\infty\geq C\sup_{B_r(x_j)\cap\O}u_\infty.$$
	Now, for any $i\in\{2,\dots,n\}$, by~\eqref{connected} there
	exists $j<i$ for which $B_r(x_i)\cap B_r(x_j)\cap\O\neq\emptyset$ and therefore
	$$\inf_{B_r(x_i)\cap\O}u_\infty\geq C \sup_{B_r(x_i)\cap\O}u_\infty\geq
	 C \inf_{B_r(x_j)\cap\O}u.$$
	Iterating at most $i-2$ times we eventually deduce from~\eqref{supOR} that
	$$\inf_{B_r(x_i)\cap\O}u_\infty\geq C^{i-1}\inf_{B_r(x_1)\cap\O}u_\infty
	\geq C^i\sup_{B_r(x_1)\cap\O}u\geq C^i\sup_{\O_R(y)}u.$$
	This implies that $\inf_{\O_R(y)}u_\infty\geq
	C^n\sup_{\O_R(y)}u_\infty$, whence by~\eqref{lbreldense}, $\inf_{\O}u_\infty\geq C^n\e$.
\end{proof}

\begin{proof}[Proof of Table~\ref{tab:attraction}]
	Let us prove the implication of the first line. The condition~$\lB>0$ 
	implies the existence of a positive supersolution $\phi$ 
	to~\eqref{epB} for some $\lambda>0$. Hence $\bar u(t,x):=\phi(x)e^{-\lambda t}$
	satisfies
	$$\partial_t \bar u=-\lambda\bar u\geq\L\bar u,$$
	and therefore $\bar u$ is a supersolution to~\eqref{BP} due to~\eqref{KPP}.
	Next, for a given solution~$u$ with a compactly supported initial datum~$u_0\geq0$, 
	we can take $\tau$ sufficiently
	negative so that $\bar u(\tau,x)\geq u_0(x)$ for all $x\in\O$.
	It follows from the comparison principle that
	$$0\leq\lim_{t\to+\infty}u(t,x)\leq\lim_{t\to+\infty}\bar u(t,x)=0,$$
	locally uniformly with respect to~$x\in\ol\O$.
	
	The same argument works for a general bounded initial datum~$u_0\geq0$,
	provided the supersolution $\phi$ has a positive infimum. This proves the second 
	row of~the~table.
	
	Suppose now that \eqref{KPP}-\eqref{saturation} hold and that
	$\mu_{\mc{B}}^b>0$.
	Let $v$ be the solution to~\eqref{BP} 
	with a constant initial datum $v_0$ larger than the value $S$ from~\eqref{saturation}.
	Because $v_0$ is a supersolution to~\eqref{BP}, the comparison principle implies that
	$v$ is nonincreasing in $t$ and thus, as $t\to+\infty$, it converges locally uniformly to a 
	stationary solution $\phi$ of~\eqref{BP} satisfying $0\leq\phi\leq v_0$.
	We necessarily have that $\phi\equiv 0$, because otherwise $\phi$ would be everywhere positive, by the \SMP,
	which would yield $\mu_{\mc{B}}^b\leq0$ due to~\eqref{KPP}. We deduce from the comparison principle
	that any solution $u$ with an initial datum $0\leq u_0\leq v_0$ converges locally uniformly to~$0$
	as $t\to+\infty$. Since $v_0$ can be taken arbitrarily large, this means that $0$ is globally attractive.	
\end{proof}


\section{The self-adjoint case}\label{sec:s-a}

We now focus on the case where $\L$ and $\B$ are in the self-adjoint form~\eqref{LBs-a}.
%
The estimate on the \pe\ we are going to derive rely on a geometrical 
lemma concerning the growth of the domains $\O_r$. We recall 
that~$\O_r$ denotes the connected component of $B_r\cap\O$ containing the origin.
For later use, we derive the result in terms of a measure which is absolutely continuous 
with respect to the Lebesgue measure.

%
%
\begin{lemma}\label{lem:geo}
	Let $\O\subset\R^d$ be a measurable set 
	and let $|\.|_f$ be the measure associated with a nonnegative density $f\in L^\infty_{loc}(\ol\O)$.
	Assume that $|\O_1|_f>0$.
	Then 
	$$\forall n\in\N,\qquad
	\min_{m\in\{1,\dots,n\}}\frac{|\O_{m+1}\setminus B_m|_f}{|\O_m|_f}<
	\bigg(\frac{|B_1|}{|\O_1|_f}\|f\|_{L^\infty(\O_{n+1})}\, n(n+1)^d+1\bigg)^{1/n}-1.$$
\end{lemma}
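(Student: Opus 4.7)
The plan is to argue by contradiction via a geometric growth estimate for the weighted shell measures. Set $a_m := |\O_m|_f$ and $b_m := |\O_{m+1}\meno B_m|_f$, and let $K$ denote the quantity in parentheses on the right-hand side, so that the target inequality reads $\min_{1\le m\le n} b_m/a_m < K^{1/n}-1$. Assume instead, toward a contradiction, that $b_m\ge (K^{1/n}-1)\,a_m$ for every $m\in\{1,\dots,n\}$.

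The core step would be to convert this assumption into a multiplicative growth bound $a_{m+1}\ge K^{1/n}\,a_m$. For this I would rely on two elementary set-theoretic observations. First, because $\O_m$ is a connected subset of $\O\cap B_{m+1}$ containing the origin, it sits inside the connected component of $\O\cap B_{m+1}$ through $0$; that is, $\O_m\subset\O_{m+1}$. Second, since $\O_m\subset B_m$, the sets $\O_m$ and $\O_{m+1}\meno B_m$ are disjoint subsets of $\O_{m+1}$, so $a_{m+1}\ge a_m + b_m$. Combined with the contradiction hypothesis, this gives $a_{m+1}\ge K^{1/n} a_m$, and iterating from $m=1$ to $m=n$ yields $a_{n+1}\ge K\, a_1$.

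To close the argument, I would compare this with the trivial upper bound $a_{n+1}\le \|f\|_{L^\infty(\O_{n+1})}|B_1|(n+1)^d$ coming from $\O_{n+1}\subset B_{n+1}$. Substituting the definition of $K$ and rearranging produces the inequality $a_1 + (n-1)\|f\|_{L^\infty(\O_{n+1})}|B_1|(n+1)^d\le 0$, which is impossible for $n\ge 1$ since $a_1 = |\O_1|_f > 0$. There is no real obstacle in this argument; the factor $n$ together with the summand $+1$ in the definition of $K$ are precisely what provide the slack needed for the strict inequality in the conclusion. The one point deserving attention beyond routine computation is the inclusion $\O_m\subset\O_{m+1}$, which relies on the connected-component definition of $\O_m$ and not merely on $B_m\subset B_{m+1}$.
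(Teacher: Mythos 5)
Your proposal is correct and takes essentially the same approach as the paper: both rest on the telescoping decomposition of $\O_{m+1}$ into $\O_m$ and the shell $\O_{m+1}\setminus B_m$, iterate the resulting geometric growth, and compare to the crude upper bound $|\O_{n+1}|_f\le\|f\|_{L^\infty(\O_{n+1})}|B_1|(n+1)^d$. The only difference is cosmetic: you phrase the iteration by contradiction as $a_{m+1}\ge K^{1/n}a_m$, whereas the paper works directly with the shell sums $\alpha_m\ge k\sum_{j<m}\alpha_j$ and uses the elementary bound $k(1+k)^{n-1}\ge\big((1+k)^n-1\big)/n$ to solve for $k$; your version avoids that auxiliary inequality and is slightly leaner, but it is the same argument.
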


\begin{proof}

\bigskip

	Fix $n\in\N$. Our goal is to estimate the quantity
	$$k:=\min_{m\in\{1,\dots,n\}}\frac{|\O_{m+1}\setminus B_m|_f}{|\O_m|_f}.$$
	For $j\in\N\cup\{0\}$, let us call 
	$$\alpha_j:=|\O_{j+1}\setminus B_j|_f,$$
	with the standard convention that $B_0=\emptyset$.
	For any integer $m\leq n$, there holds that 
	$$\O_m\supset\bigcup_{j=0}^{m-1} \O_{j+1}\setminus B_j,$$
	whence
	$$|\O_m|_f\geq\sum_{j=0}^{m-1} \alpha_j.$$ 
	Then, because $\alpha_m\geq k |\O_m|_f$ by the definition of $k$, we find that
	$$\forall m\leq n,\quad
	\alpha_m\geq k \sum_{j=0}^{m-1} \alpha_j.$$
	One can check by induction that this yields
	$$\forall m\leq n,\quad
	\alpha_m\geq \alpha_0k(1+k)^{m-1}.$$
	From this, in order to get an explicit inequality for $k$, 
	we call $g(x):=x(1+x)^{n-1}$ and observe that $g'(x)\geq (1+x)^{n-1}$ for $x\geq 0$, whence
	$$\forall x\geq 0,\quad
	g(x)\geq \frac{(1+x)^n-1}{n}.$$
	We therefore derive $\alpha_n\geq\alpha_0\frac{(1+k)^n-1}n$, that is,
	$$k\leq \Big(n\frac{\alpha_n}{\alpha_0}+1\Big)^{1/n}-1,$$
	which gives the desired estimate because $\alpha_n< |\O_{n+1}|_f\leq|B_1|(n+1)^d\|f\|_{L^\infty(\O_{n+1})}$.
\end{proof}

The crucial consequence of this lemma is that the right-hand side of the inequality tends to $0$
as $n\to+\infty$ provided $f$ is bounded (or, more in general, it has 
sub-exponential growth). Of course the lemma holds true if one considers balls centred 
at an arbitrary point $y\in\O$ rather than the origin.
For uniformly smooth domains, we can get rid of the term~$|\O_1|$ 
and obtain an estimate which is uniform with respect to the centre of the balls.

\begin{lemma}\label{lem:C}
Let $\O\subset\R^d$ be an open set 
containing the origin, with $\partial\O$ locally of class $C^1$ and
satisfying the uniform interior ball condition of some radius $\rho>0$.
Then, 
there holds 
$$|\O_1|\geq |B_1|(\min\{\rho,1/2\})^d.$$

Furthermore, if $\O$ is connected and unbounded then
$$\forall r>0,\quad
|\O_{r+1}\setminus B_r|\geq |B_1|(\min\{\rho,1/4\})^d.$$
\end{lemma}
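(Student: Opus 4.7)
My plan is, in each case, to exhibit a single Euclidean ball of the required radius, check that it sits in $\O$, and check that it is path-connected to~$0$ (resp.\ to a far-away anchor point) through $\O\cap B_1$ (resp.\ $\O\cap B_{r+1}$). The only geometric input I will use is the uniform interior ball condition, which at any point $\xi\in\partial\O$ supplies a tangent Euclidean ball $B_\rho(y)\subset\O$ of radius~$\rho$.

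\textbf{First estimate.} Set $m:=\min\{\rho,1/2\}$ and $d_0:=\dist(0,\partial\O)$. If $d_0\geq m$, then $B_m(0)\subset \O\cap B_1$ is itself a connected subset containing~$0$, so it lies in $\O_1$, and we are done. Otherwise $d_0<m\leq\rho$; let $\xi\in\partial\O$ realise $d_0$. Since $\partial\O$ is $C^1$, the inward unit normal at $\xi$ equals $\nu_{\mathrm{in}}=-\xi/d_0$, and the associated tangent interior ball is $B_\rho(y)$ with $y=\xi(1-\rho/d_0)$. I define $z:=\xi(1-m/d_0)$; a direct computation gives $|z-y|=\rho-m$, hence $B_m(z)\subset B_\rho(y)\subset\O$, while $|z|=m-d_0\leq m\leq 1/2$ forces $B_m(z)\subset B_1(0)$. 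Finally, both $0$ and $z$ lie in the convex open set $B_\rho(y)$, so the segment joining them sits in $B_\rho(y)\cap B_1(0)\subset\O\cap B_1$, which places $B_m(z)$ in the connected component $\O_1$ of~$0$.

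\textbf{Second estimate.} Put $m':=\min\{\rho,1/4\}$. Since $\O$ is open, connected and unbounded, it is path-connected and meets $\{|x|\geq r+1\}$; fix a continuous path $\gamma\colon[0,1]\to\O$ from~$0$ to such a point, and set $t^*:=\inf\{t:|\gamma(t)|\geq r+1\}>0$. The set $\gamma([0,t^*))$ is connected, contains~$0$, and is contained in $\O\cap B_{r+1}$, so it lies inside $\O_{r+1}$. The intermediate value theorem applied to $|\gamma(\cdot)|$ then yields $t_0\in(0,t^*)$ with $x_0:=\gamma(t_0)$ satisfying $|x_0|=r+1/2$ and $x_0\in\O_{r+1}$. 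I now rerun the construction of the first estimate centred at $x_0$ with ambient ball $B_{1/2}(x_0)$ in place of $B_1(0)$: this produces a point $z$ with $|z-x_0|<m'$ and a ball $B_{m'}(z)\subset\O\cap B_{1/2}(x_0)$ connected to $x_0$ through this set. Because $B_{1/2}(x_0)\subset B_{r+1}(0)$, it follows that $B_{m'}(z)\subset\O_{r+1}$; and for every $y\in B_{m'}(z)$ the triangle inequality gives $|y|>|x_0|-2m'=r+1/2-2m'\geq r$ thanks to $m'\leq 1/4$. Thus $B_{m'}(z)\subset\O_{r+1}\setminus B_r$, as desired.

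\textbf{Expected main obstacle.} The only delicate issue is connectedness: the candidate ball has to land in the specific component $\O_1$ (resp.\ $\O_{r+1}$) of the origin, not merely in $\O\cap B_1$ (resp.\ $\O\cap B_{r+1}$), which could split into several components when $\partial\O$ pinches $B_1$ (resp.\ $B_{r+1}$). Confining the entire construction to the single convex interior ball $B_\rho(y)$ in the first estimate, and to $B_{1/2}(x_0)$ tied to~$0$ by the path $\gamma$ in the second, is what bypasses this obstacle.
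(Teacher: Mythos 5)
Your proof is correct and follows essentially the same approach as the paper's: locate the nearest boundary point, invoke the interior ball condition together with $C^1$ regularity to place an explicit Euclidean ball of the required radius inside the truncated domain, and verify connectedness to the anchor point through a single convex set. The only cosmetic differences are that you keep the full interior ball $B_\rho(y)$ and carve a sub-ball $B_m(z)$ out of it (the paper instead shrinks the interior-ball radius to $\min\{\rho,1/2\}$ directly), and you spell out the path-and-IVT argument for the existence of $x_0\in\O_{r+1}\cap\partial B_{r+1/2}$ which the paper treats as ``easily seen.''
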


\begin{proof}
	Let us call 
	$$\t\rho:=\min\{\rho,1/2\},\qquad
	\delta:=\dist(0,\partial\O).$$
	If $\delta\geq\t\rho$ then the result trivially holds. Suppose that $\delta<\t\rho$.
	We have that $B_\delta\subset\O$ and then  there exists $\xi\in\partial B_\delta\cap\partial\O$.   
	We know that $\O$ satisfies the interior ball condition of radius $\rho$.
	Clearly, this holds true with $\rho$ replaced by $\t\rho$.
	This means that there exists $y\in\O$ such that $B_{\t\rho}(y)\subset\O$ and 
	$\xi\in\partial B_{\t\rho}(y)$.
	Because $\partial\O$ is of class~$C^1$, the balls $B_\delta$ and $B_{\t\rho}(y)$
	must be tangent, and actually $B_\delta\subset B_{\t\rho}(y)$ 
	(because $\delta<\t\rho$).
	In particular, we have that 
	$|y|=\t\rho-\delta<\t\rho$, which implies that $B_{\t\rho}(y)\subset B_{2\t\rho}\subset B_1$.
	As a consequence, we find that
	$B_{\t\rho}(y)\subset\O_1$.
	The first statement then follows.
	
	The second statement is derived in a similar way. Take $r>0$.
	Using the fact that $\O$ is unbounded and connected, 
	it is easily seen that $\O_{r+1}$ must intersect the set $\partial B_{r+\frac12}$.
	Let $z\in \O_{r+1}\cap \partial B_{r+\frac12}$. We let
	$\delta$ be the maximal radius of the balls centred at~$z$ and contained in $\O$.
	Then we distinguish the cases $\delta\geq \t\rho$ and $\delta<\t\rho$, but this time with
	$\t\rho:=\min\{\rho,1/4\}$. In the first case we are done.
	In the second one we proceed as before and we find
	$y\in\O$ such that 
	$B_\delta(z)\subset B_{\t\rho}(y)\subset\O$. From this we deduce that
	$$\left||y|-\Big(r+\frac12\Big)\right|\leq|y-z|=\t\rho-\delta<\t\rho. $$
	Therefore, $B_{\t\rho}(y)\subset B_{r+1}\setminus B_r$ because 
	$\t\rho\leq1/4$. It follows in particular that 
	$B_{\t\rho}(y)\subset \O_{r+1}\setminus B_r$, whence the conclusion of the lemma.
\end{proof}

\begin{remark}\label{rem:BHNgeneral}
Applying Lemma~\ref{lem:geo} with $\O=\R^d$ and $f=\1_\O$, and then 
Lemma~\ref{lem:C}, we derive, for every $y\in\O$,
$$\forall n\in\N,\qquad
	\min_{m\in\{1,\dots,n\}}\frac{|\O\cap B_{m+1}(y)\setminus B_m(y)|}{|\O\cap B_m(y)|}<
	\bigg(C n(n+1)^d+1\bigg)^{1/n}-1,$$
with $C$ only depending on the radius $\rho$ of the uniform interior ball condition.
Then, taking the limit as $n\to\infty$ in both sides and using the
last statement of Lemma~\ref{lem:C}, we derive
\Fi{b'}
\liminf_{n\to\infty}\frac{|\O\cap B_{n+1}(y)|}{|\O\cap B_n(y)|}=1,\quad
\text{uniformly with respect to $y\in\O$}.
\Ff
Such condition, with ``$\liminf$'' replaced by ``$\lim$'', is one of
the geometric assumptions of \cite[Theorem~1.7]{BHNgeneral}, c.f.~hypothesis $b)$ in
the statement of that theorem that we reclaim here as \thm{BHNgeneral}.
What we have just shown is that a weaker version of such hypothesis is guaranteed by
the uniform regularity of the domain, and this will be enough to prove the hair-trigger effect.  
Instead, the stronger hypothesis $b)$, even restricted at one fixed point $y$,
may fail in a uniformly smooth domain, as shown for instance by the domain
$$\O:=\mc{C}\cup\bigcup_{n\in\N}B_{2^n+1}\setminus\ol B_{2^n},$$
where $\mc{C}$ is an approximation of a cylinder (whose role is just to make $\O$ connected).
We also mention that there exist some smooth, but not uniformly smooth, domains 
for which~\eqref{b'} fails.
\end{remark}


The previous geometrical lemmas will allow us to estimate the \pe s in truncated domains
thanks to a Rayleigh-Ritz-type formula.
We point out that the arguments in~\cite{BHNgeneral} also make use of 
a Rayleigh quotient in truncated domains, but without invoking 
the \pe, in order to avoid the difficulties of its construction due to 
the lack of regularity of the boundary.

\begin{proposition}\label{pro:RR}
	Let $\L,\B$ be given by~\eqref{LBs-a}. Then, for a.e.~$r>0$, there holds that
	\Fi{RR}
	\l(y,r)=\min_{\su{v\in H^1(\O_r(y)),\ v\not\equiv0}{\tr v=0\text{ on }\partial B_r(y)}}
	\frac{\int_{\O_r(y)}(\nabla v\.A\nabla v-c v^2)+\int_{B_r(y)\cap \partial \O_r(y)}\gamma v^2}
	{\int_{\O_r(y)} v^2}.
	\Ff
	
	Moreover,
	$$
	\lB=\inf_{\su{v\in C^1_0(\ol\O)}{v\not\equiv0}}
	\frac{\int_{\O}(\nabla v\.A\nabla v-c v^2)+\int_{\partial \O}\gamma v^2}
	{\int_{\O} v^2}.
	$$
\end{proposition}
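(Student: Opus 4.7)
The plan is to establish \eqref{RR} by combining the variational structure of the self-adjoint mixed problem with the uniqueness assertion of \thm{mixed}, and then to derive the formula for $\lB$ by passing to the limit $r \to \infty$ via \thm{lrinfty}.

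For \eqref{RR}, I restrict to radii $r$ for which $\O_r := \O_r(y)$ is Lipschitz, i.e.~those satisfying~\eqref{rSard}, since by \eqref{rSard} this is the case for a.e.~$r>0$. Consider the Hilbert space $V := \{v \in H^1(\O_r) : \tr v = 0 \text{ on } \partial B_r(y) \cap \partial\O_r\}$ together with the symmetric bilinear form
$$a(u,v) := \int_{\O_r}(\nabla u\cdot A\nabla v - cuv) + \int_{B_r(y)\cap\partial\O_r}\gamma uv.$$
Using the uniform ellipticity of $A$ on the compact set $\ol\O_r$, the boundedness of $c$ and $\gamma$, and the trace inequality $\|v\|_{L^2(\partial\O_r)}^2 \leq \e\|v\|_{H^1}^2 + C_\e\|v\|_{L^2}^2$ for Lipschitz domains, the form $a + M\langle\cdot,\cdot\rangle_{L^2}$ is coercive on $V$ for $M$ large enough. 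Together with the Rellich compact embedding $V \hookrightarrow L^2(\O_r)$, standard spectral theory yields that the infimum $\lambda^\ast$ of the Rayleigh quotient in~\eqref{RR} is attained by some $v^\ast \in V$, which weakly solves the mixed eigenvalue problem with eigenvalue~$\lambda^\ast$.

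Since $|v^\ast| \in V$ has the same Rayleigh quotient as $v^\ast$, I may assume $v^\ast \geq 0$. Interior Schauder estimates and the boundary regularity of~\cite{Lie86}, valid thanks to~\eqref{rSard} and already exploited in Lemma~\ref{lem:mixed}, imply $v^\ast \in C^{2,\alpha}(\O_\rho) \cap C^0(\ol\O_r)$ for every $\rho<r$. The strong maximum principle and Hopf's lemma on $B_r(y)\cap\partial\O_r$ then give $v^\ast > 0$ in $\ol\O_r \setminus \partial B_r(y)$, so by the uniqueness part of \thm{mixed} I obtain $\lambda^\ast = \lambda(y,r)$. Multiplying the equation for the principal eigenfunction $\vp_{y,r}$ by $\vp_{y,r}$ itself and integrating by parts on the Lipschitz domain $\O_r$ shows that the Rayleigh quotient at $\vp_{y,r}$ equals $\lambda(y,r)$, confirming that the infimum is attained and equals $\lambda(y,r)$.

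Denote by $J$ the right-hand side of the formula for $\lB$. For the inequality $\lB \leq J$: any $v \in C^1_0(\ol\O)\setminus\{0\}$ has compact support in $\ol\O$, so by~\eqref{Oapprox} one has $\supp v \subset \ol\O_r(y)$ for $r$ large, making $v$ admissible in~\eqref{RR}; hence the Rayleigh quotient $R(v)$ satisfies $\lambda(y,r) \leq R(v)$, and letting $r \to \infty$ via \thm{lrinfty} gives $\lB \leq R(v)$. For the reverse inequality, extending $\vp_{y,r}$ by zero outside $\O_r(y)$ produces a compactly supported element of $H^1(\O)$ (its trace on the Dirichlet portion vanishes) whose Rayleigh quotient equals $\lambda(y,r)$ by the previous step; approximating it in the $H^1$-norm by $C^1_0(\ol\O)$ functions -- which is possible since $\O_r$ is Lipschitz and $\vp_{y,r}$ vanishes on the Dirichlet part of its boundary -- yields $J \leq \lambda(y,r) \to \lB$.

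The principal obstacle lies in reconciling the weak variational minimizer in $H^1$ with the classical positive eigenfunction from~\thm{mixed}: this forces me to rely on the boundary regularity theory of~\cite{Lie86} at the wedges $\partial\O\cap\partial B_r(y)$, the same machinery already underpinning Section~3. A secondary, more routine technical point is the density of $C^1_0(\ol\O)$ in the relevant $H^1$-closure used in the last step.
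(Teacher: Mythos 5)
Your overall route to~\eqref{RR} -- set up the variational problem, extract a minimizer, identify it with the principal eigenfunction, then pass to the limit -- is reasonable in outline, but it differs from the paper's argument at precisely the point the paper is at pains to avoid, and that point is a real gap, not the "routine technical point" you flag at the end.

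The paper never identifies the $H^1$ minimizer with the classical principal eigenfunction of \thm{mixed}. It proves $k:=\inf_{\mc H}\mc E$ is attained (via the parallelogram law rather than coercivity plus Rellich, though that is a cosmetic difference), and then sandwiches $k$ around $\lambda(0,r)$ by two separate tricks, each designed to bypass regularity at the wedges $\partial\O\cap\partial B_r$: (a) for $k\le\lambda(0,r)$ it truncates the classical eigenfunction as $v_\e=(\vp-\e)_+$ -- because it is not known that $\vp$ itself lies in $H^1(\O_r)$ -- and passes $\e\to0$; (b) for $k\ge\lambda(0,r)$ it takes $\t r>r$, whose classical eigenfunction $\t\vp$ is strictly positive on all of $\ol\O_r$, multiplies its equation by $\psi^2/\t\vp$ (a genuine $H^1(\O_r)$ function with zero trace on the Dirichlet part) and uses a Picone-type inequality, then lets $\t r\downarrow r$ on the continuity set $\mc R'$. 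This is why the paper restricts to $r\in\mc R\cap\mc R'$ rather than merely to $r$ satisfying~\eqref{rSard}.

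Your argument instead asserts that interior Schauder plus Lieberman's boundary estimates promote the weak Rayleigh minimizer $v^\ast$ to a classical solution continuous up to $\ol\O_r$, so that \thm{mixed} uniqueness gives $\lambda^\ast=\lambda(y,r)$. The regularity results of~\cite{Lie86} that the paper invokes are for solutions constructed by Lieberman's approximation scheme (as in Lemma~\ref{lem:mixed}); they do not come for free for an abstract Hilbert-space minimizer, and the identification of the two objects is exactly what the paper's parenthetical remark calls "very involved, c.f.~\cite{Lie89}" and deliberately avoids. Concretely you have a gap: you need either a separate regularity theory for weak solutions of the mixed problem at the wedges, or the paper's sandwich argument. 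Two further points: your last sentence of paragraph~2 ("multiplying the equation for $\vp_{y,r}$ by $\vp_{y,r}$ and integrating by parts") presupposes $\vp_{y,r}\in H^1(\O_r)$, which is precisely what the truncation $v_\e$ is introduced to sidestep, so this claim should be dropped; and in your proof of $J\le\lB$ you should extend $v^\ast$ (which is in $H^1$ by construction) rather than $\vp_{y,r}$, for the same reason. Finally, the uniqueness conclusion of \thm{mixed} is only guaranteed for a.e.~$r$ in a set strictly smaller than those satisfying~\eqref{rSard}, so you should restrict to the set of radii for which \thm{mixed} applies -- this still exhausts $\R_+$ up to a null set, so the statement is unaffected, but the attribution to~\eqref{rSard} alone is inaccurate.
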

We recall that $C^1_0(E)$ denotes  the set of $C^1$ functions with compact support in~$E$ and
$\tr$ stands for trace operator.
If $A,\gamma$ are bounded then one can replace $C^1_0(\ol\O)$ with $H^1(\O)$
with null trace on the boundary in the formula for~$\lB$.
The proof of this Rayleigh-Ritz formula is rather standard, although some adaptation is 
needed because the truncated 
domain is not smooth
(for instance, regularity estimates on the derivatives
that would allow one to 
plug the \pf\ of \thm{lrinfty} into~\eqref{RR} 
become very involved, c.f.~\cite{Lie89})
We give it in the appendix.

\begin{proof}[Proof of \thm{l<<0}]
%
	Take $y\in\O$ and $n\in\N$. It follows from Lemma~\ref{lem:geo} (with $f\equiv1$)
	that there exists
	$m=m(y,n)\in\{1,\dots,n\}$ such that
	\Fi{myn}
	\frac{|\O_{m+1}(y)\setminus B_{m}(y)|}{|\O_{m}(y)|}<
	\bigg(\frac{|B_1(y)|}{|\O_1(y)|}n(n+1)^d+1\bigg)^{1/n}-1.
	\Ff
	\thm{lrinfty} and Proposition~\ref{pro:RR} provide us with a radius $r\in(m+\frac12, m+1)$ 
	for which the mixed problem~\eqref{ep-mixed} admits
	the \pe~$\lambda(y,r)$ satisfying~\eqref{RR}. 
%
	Let $v$ be a smooth function compactly supported in $B_r(y)$ and satisfying
	$ v\equiv1$ on $B_{m}(y)$ and $0\leq v\leq1$,  $|\nabla v|\leq2$ elsewhere.
	We deduce from~\eqref{RR} (here $\gamma\equiv0$) that
	$$\l(y,r)\leq
	4\bar A \,\frac{\left|\O_r(y)\setminus B_{m}(y)\right|}
	{\left|\O_r(y)\cap B_{m}(y)\right|}
	-\frac{\int_{\O_r(y)}c v^2}
	{\int_{\O_r(y)} v^2},$$
	where $\bar A$ is the largest ellipticity constant of $A$.
 	We rewrite the second term of the right-hand side as
	\[
	\frac{\int_{\O_r(y)}c v^2}
	{\int_{\O_r(y)} v^2} =
	\frac{\int_{\O_r(y)}c}
	{\left|\O_r(y)\right|}
	\frac{\left|\O_r(y)\right|}
	{\int_{\O_r(y)} v^2}
	+\frac{\int_{\O_r(y)}c( v^2-1)}
	{\int_{\O_r(y)} v^2},\]
	whence 
	\[\begin{split}
	\frac{\int_{\O_r(y)}c v^2}
	{\int_{\O_r(y)} v^2} - \frac{\int_{\O_r(y)}c}
	{\left|\O_r(y)\right|}
	&\geq -2\|c\|_\infty\,\frac{\int_{\O_r(y)}(1- v^2)}
	{\int_{\O_r(y)} v^2}\\
	&\geq -2\|c\|_\infty\,\frac{\left|\O_r(y)\setminus B_{m}(y)\right|}
	{\left|\O_r(y)\cap B_{m}(y)\right|}.
	\end{split}\]
	It follows that
	\[
	\l(y,r) \leq - \frac{\int_{\O_r(y)}c}
	{\left|\O_r(y)\right|}+
	(4\bar A+2\|c\|_\infty) \,\frac{\left|\O_r(y)\setminus B_{m}(y)\right|}
	{\left|\O_r(y)\cap B_{m}(y)\right|}.\]
	The last quotient above can be estimated using~\eqref{myn}, 
	because $\O_r(y)\cap B_{m}(y)\supset \O_m(y)$. As a consequence,
	we have shown the existence of an integer 
	$m(y,n)\leq n$ satisfying~\eqref{myn} and a radius
	$m(y,n)<r(y,n)\leq m(y,n)+1$ for which
	 \Fi{lyr<}
	 \l(y,r(y,n))\leq - \frac{\int_{\O_{r(y,n)}(y)}c}{\left|\O_{r(y,n)}(y)\right|}+
	(4\bar A+2\|c\|_\infty)
	\left[\bigg(\frac{|B_1(y)|}{|\O_1(y)|}n(n+1)^d+1\bigg)^{1/n}-1\right].
	\Ff
	We now observe that the right-hand side of~\eqref{myn} tends to $0$ as 
	$n\to\infty$, whence $m(y,n)$ tends to $\infty$
	as $n\to\infty$. Thus, for any $\e>0$, choosing $n$ large enough,
	we have from one hand that the second term in the right-hand side
	of~\eqref{lyr<} is smaller than
	$\e/2$, and from the other that $r(y,n)>m(y,n)$ is sufficiently large to have that the first term 
	is smaller than $-\mean{c}+\e/2$ (recall the definition of the average~$\mean{\.}$ from 
	Section~\ref{intro:s-a}).
	Therefore, with this choice, we derive $\l(y,r(y,n))\leq-\mean{c}+\e$.
	The first statement of the theorem then follows from \thm{lrinfty} and the arbitrariness of $\e$.
	
	To prove the second statement, we apply Lemma~\ref{lem:C}.
	The first part of the lemma implies that 
	the right-hand side of~\eqref{myn} converges to~$0$ as
	$n\to\infty$ uniformly with respect to $y\in\O$.
	Then, the second part implies that the integer $m(y,n)$ in~\eqref{myn}
	tends to $\infty$ as $n\to\infty$ uniformly with respect to $y\in\O$, 
	and thus the same occurs for $r(y,n)>m(y,n)$. 
	Recalling the definition of~$\lm{\.}$, this implies that the first
	term in the right-hand side of~\eqref{lyr<} is smaller than $-\lm{c}+\e/2$ 
	for~$n$ sufficiently large, independent of $y$.	
	Therefore, for any $\e>0$, choosing $n$ large enough we deduce from~\eqref{lyr<}
	that $\l(y,r(y,n))\leq-\lm{c}+\e$, for any $y\in\O$ and for some 
	$m(y,n)<r(y,n)<m(y,n)+1\leq n+1$. Hence, by \thm{lrinfty},
	$\l(y,n+1)\leq-\lm{c}+\e$, for any $y\in\O$ (recall that $\l(y,\cdot)$ is extended as a 
	decreasing function to the whole $\R_+$). We then derive~$\LB\leq-\lm{c}+\e$,
	which concludes the proof.
\end{proof}

 \begin{remark}\label{rem:l<<}
 	If $\O$ is bounded then there exists $R>0$ such that
	$\O_r(y)=\O$ for any~$y\in\O$ and $r\geq R$.
	It follows that $\lm{c}$ coincides with the average of
	$c$ on $\O$ and that $\l(y,r)$ coincides, for any $r>R$, with the classical Neumann \pe\
	of $-\L$ in $\O$. 
	Therefore, in such case, \thm{l<<0} reduces to the well known property 
	that the Neumann \pe\ is smaller than or equal to the average of $c$.
\end{remark}

When the hair-trigger effect holds and the problem admits a unique steady state 
with positive infimum, one gets a precise convergence result. For instance, for the 
Fisher-KPP problem
\Fi{BPlogistic}
\begin{cases}
	\partial_t u=\nabla\cdot (A(x) \nabla u)+c(x)f(u), & \quad t>0,\ x\in\O\\
	\nu\.A(x)\nabla u=0, & \quad t>0,\ x\in\partial\O,
\end{cases}
\Ff
we have the following.

\begin{proposition}\label{pro:h-t}
Assume that $A,c$ are bounded, with $\inf_\O \ul A>0$ and $\inf_\O c>0$, 
that~$f$ is positive in $(0,1)$ and negative outside $[0,1]$ and
that $\partial\O$ is 
	uniformly of class~$C^2$.
	Then any solution to~\eqref{BPlogistic} with an initial condition $u_0\geq0,\not\equiv0$
	converges to $1$ as $t\to+\infty$ locally uniformly in~$\ol\O$.
\end{proposition}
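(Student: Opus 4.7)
My plan is to sandwich the solution between two spatially-constant envelopes, both driven by the ODE $v' = (\inf_\O c)\,f(v)$ and both converging to $1$, and to use a one-line ODE-subsolution trick to rule out nontrivial elements in the $\omega$-limit set. The upper envelope starts above $1$ and decreases to $1$; the lower envelope is activated once the hair-trigger effect supplies a positive pointwise lower bound.

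For the upper estimate, the sign condition $f \leq 0$ on $[1,+\infty)$ makes any constant $M \geq \max(\|u_0\|_\infty,1)$ a stationary supersolution of~\eqref{BPlogistic}, giving $u \leq M$ by parabolic comparison. Let $\bar V(t)$ solve $\bar V' = (\inf_\O c)\,f(\bar V)$ with $\bar V(0) = M$; as a constant-in-space function $\bar V$ trivially matches the Neumann condition, and the signs $f(\bar V) \leq 0$ and $c \geq \inf_\O c$ combine to $c\,f(\bar V) \leq (\inf_\O c)\,f(\bar V) = \bar V'$, so $\bar V$ is a parabolic supersolution. Since $\bar V(t) \searrow 1$, this gives $\limsup_{t \to +\infty} u(t,x) \leq 1$ uniformly in $x$. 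For the lower estimate, I would apply Corollary~\ref{cor:h-t}, valid under $f'(0) > 0$ (which is implicit in the monostable structure of $f$ and consistent with the discussion preceding the statement of the proposition), to produce some $\eta > 0$ with $\liminf_{t \to +\infty} u(t,x) \geq \eta$ for every $x \in \ol\O$.

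Standard interior and boundary parabolic Schauder estimates --- the latter valid up to $\partial\O$ thanks to the uniform $C^2$ regularity and the oblique-derivative form of the Neumann condition --- show that $\{u(t,\cdot) : t \geq 1\}$ is relatively compact in $C_{loc}(\ol\O)$, so its $\omega$-limit $\omega(u)$ is nonempty and every $\phi \in \omega(u)$ is a classical steady state of~\eqref{BPlogistic} with $\eta \leq \phi \leq 1$. The core step is then to show $\phi \equiv 1$ for each such $\phi$. Set $a := \inf_{\ol\O} \phi \in [\eta,1]$ and let $w(t)$ solve $w' = (\inf_\O c)\,f(w)$, $w(0) = a$; since $f > 0$ on $(0,1)$, $w(t) \nearrow 1$ as $t \to +\infty$. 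The function $w$, constant in $x$, satisfies the Neumann condition, and the inequality $f(w) \geq 0$ now reverses the previous one, making $w$ a parabolic subsolution of~\eqref{BPlogistic}. Running the parabolic problem from the stationary initial datum $\phi$ produces $\phi$ itself as the solution, and comparison with $w(0) = a \leq \phi$ yields $\phi(x) \geq w(t)$ for all $t \geq 0$, hence $\phi \geq 1$ in the limit; combined with $\phi \leq 1$ this forces $\phi \equiv 1$. Thus $\omega(u) = \{1\}$ and $u(t,\cdot) \to 1$ locally uniformly in $\ol\O$.

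The main obstacle is securing the pointwise lower bound $\liminf_{t \to +\infty} u(t,x) \geq \eta$, which is where the full machinery of Theorems~\ref{thm:LambdaB} and~\ref{thm:l<<0} enters through Corollary~\ref{cor:h-t}; the remaining dynamical-systems sandwich is essentially standard, the only nonobvious ingredient being the use of a single ODE driven by $\inf_\O c$ as both upper and lower parabolic envelope, which is possible precisely because $f$ changes sign at $1$ and the sign flip of $f$ compensates the one-sided estimate $c \geq \inf_\O c$ in each direction.
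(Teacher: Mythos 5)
Your proposal reproduces the paper's strategy almost exactly: an ODE supersolution driven by $v'=(\inf_\O c)\,f(v)$ started above $1$ gives $\limsup_{t\to+\infty} u\leq 1$; Theorems~\ref{thm:l<<0} and~\ref{thm:LambdaB} (packaged into Corollary~\ref{cor:h-t} and the remark following it, to handle the $c(x)f(u)$ form) give the uniform lower bound $\liminf_{t\to+\infty} u\geq\eta>0$; and an ODE subsolution started at $\eta$ and pushed backward in time squeezes the asymptotic state to $1$. (Your ODE for the upper envelope, $\bar V'=(\inf_\O c) f(\bar V)$, is in fact the correct one for a general $f$, whereas the paper writes the specific logistic form $v(1-v)$; your version is the one that actually verifies the supersolution inequality $v'\geq c(x)f(v)$ from $f(v)\leq0$ for $v\geq1$.)

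There is, however, one genuine gap: you assert that \emph{every $\phi\in\omega(u)$ is a classical steady state} of~\eqref{BPlogistic}, and then use this to claim that running the Cauchy problem from $\phi$ reproduces $\phi$, so that $\phi\geq w(t)$ for all $t\geq0$. On a general unbounded domain, with convergence only in the locally uniform topology, elements of the $\omega$-limit set are traces at time $0$ of bounded \emph{entire} solutions, not necessarily stationary ones; no Lyapunov or monotonicity structure is invoked here that would force stationarity. The paper is careful on precisely this point: it extracts, from $u(t+t_n,\cdot)$, a limit $\t u(t,x)$ defined for \emph{all} $t\in\R$ and compares $\t u(t-n,\cdot)$ with the ODE solution $v$ started at time $0$, then lets $n\to\infty$. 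The fix for your argument is the same and requires no new idea: since $\t u(s,\cdot)\geq\eta$ for every $s\in\R$ (from the uniform $\liminf$ bound applied along each $s+t_n$), compare $\t u$ with $w$ on the interval $[-T,0]$ using $\t u(-T,\cdot)\geq\eta=w(0)$ to get $\phi=\t u(0,\cdot)\geq w(T)$, and send $T\to\infty$. As written, though, "$\phi$ is a steady state" is an unjustified assertion and the argument that depends on it does not go through without the entire-solution reformulation.

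A minor point worth flagging: both your proof and the paper's rely on $f'(0)>0$ so that the linearised zeroth-order coefficient $c(x)f'(0)$ has positive least mean; the stated hypotheses ($f>0$ on $(0,1)$, $f<0$ outside $[0,1]$) only force $f'(0)\geq0$. You acknowledge this parenthetically; it is an implicit assumption of the proposition rather than a defect specific to your argument.
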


\begin{proof}
	The linearised operator around $0$ and the boundary operator for the problem
	are given by~\eqref{LBs-aN}. Since
	$\lm{c}\geq\inf_\O c>0$, \thm{l<<0} implies that $\LB<0$.
	Thus, by \thm{LambdaB}, $0$ is uniformly repulsive.
	This means that any solution of~\eqref{BPlogistic} with a bounded initial datum $u_0\geq0,\not\equiv0$
	satisfies
	$$k:=\inf_{x\in\O}\Big(\liminf_{t\to+\infty}u(t,x)\Big)>0.$$
	On the other hand, by comparison with the solution of the ODE
	$$v'=\Big(\inf_\O c\Big)v(1-v),\quad t>0,$$
	with initial datum $v(0)=\max\{1,\sup_\O u_0\}$, which is a supersolution to~\eqref{BPlogistic},
	one~gets
	$$\sup_{x\in\O}\Big(\limsup_{t\to+\infty}u(t,x)\Big)\leq\limt v(t)=1.$$
	
	Next, consider an arbitrary sequence $\seq{t}$ diverging to $+\infty$. Then, by local boundary
	estimates, the sequence $u(t+t_n,x)$ converges (up to subsequences),
	locally uniformly in $(t,x)\in\R\times\ol\O$, to a solution $\t u$ of~\eqref{BPlogistic}
	for all $t\in\R$. We know that $k\leq\t u\leq1$. We need to show that $\t u\equiv1$.
	Let $v(t)$ be the solution of the same ODE as before, but now with $v(0)=k$.
	It converges to $1$ as $t\to+\infty$.
	Comparing $v(t)$ with the function
	$\t u(t-n,x)$, for any given $n\in\N$,
	we derive
	$$\forall t>-n,\quad
	v(t+n) \leq\t u(t,x),$$
	from which, letting $n\to\infty$, we finally obtain $\t u\geq1$.	
	This concludes the proof.
\end{proof}

\begin{proposition}\label{pro:<c>}
	There exists $c$ with $\mean{c}>0$ such that 
	$0$ is locally repulsive but not uniformly repulsive for the problem
	\Fi{linearc}
	\partial_t u=\partial_{xx}u+c(x)u-u^2,\quad t>0,\ x\in\R.
	\Ff
\end{proposition}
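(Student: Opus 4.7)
The plan is to exhibit an explicit bounded $c$ with $\mean{c}>0$ (which forces $\lB<0$ by \thm{l<<0} and thus local repulsion by Proposition~\ref{pro:locrepulsive}) for which equation~\eqref{linearc} nevertheless admits a bounded positive stationary solution $\bar u$ with $\inf_{\R}\bar u=0$: taking $u_0:=\bar u$ gives $u\equiv\bar u$, hence $u_\infty=\bar u$ has vanishing infimum, in contradiction with uniform repulsion.

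For the construction, I fix a large constant $M>0$ and a very sparse, fast-diverging sequence $a_n\to+\infty$ together with slowly growing lengths $L_n\to+\infty$ (concretely, $a_n:=2^{2^n}$ and $L_n:=n$). Letting $I_n:=[a_n,a_n+L_n]$, I set $c\equiv-M$ on $\bigcup_n I_n$ and $c\equiv 1$ on the complement (mollified slightly to meet the $C^{0,\alpha}_{loc}$ regularity assumption). Since $\sum_{n\,:\,a_n\leq r}L_n=o(r)$ as $r\to+\infty$, the average $(2r)^{-1}\int_{-r}^{r}c$ tends to $1$, so $\mean{c}=1>0$.

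To produce $\bar u$, I would use a standard sub/super-solution approach on truncated Dirichlet problems. The constant function $1$ is a global supersolution since $\sup c\leq 1$; for the subsolution, \thm{l<<0} gives $\lB<0$, and then \thm{lrinfty} provides a radius $R$ with $\lambda(0,R)<0$, whose principal eigenfunction $\vp_R$ yields, for $\e>0$ small, a nontrivial subsolution $\e\vp_R$ (extended by zero outside $(-R,R)$) of the semilinear equation. Monotone iteration on $(-N,N)$ with $N>R$ then produces positive stationary solutions $u_N$ with $\e\vp_R\leq u_N\leq 1$; interior Schauder estimates together with a diagonal extraction yield a bounded positive steady state $\bar u$ on $\R$ with $\bar u\geq\e\vp_R$ on $(-R,R)$.

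It remains to check $\inf_\R\bar u=0$. On each $I_n$, where $c\equiv-M$, the equation gives $\bar u''=\bar u^2-c\,\bar u\geq M\bar u$. The explicit barrier $v_n(x):=\cosh(\sqrt{M}(x-m_n))/\cosh(\sqrt{M}L_n/2)$, with $m_n:=a_n+L_n/2$, satisfies $v_n''=Mv_n$ and equals $1$ at $\partial I_n$, where also $\bar u\leq 1$; applying the maximum principle to $\bar u-v_n$ with respect to the operator $\partial_{xx}-M$ (whose zeroth-order term is nonpositive) gives $\bar u\leq v_n$ on $I_n$, so $\bar u(m_n)\leq\cosh(\sqrt{M}L_n/2)^{-1}\to 0$ as $n\to\infty$, whence $\inf_\R\bar u=0$. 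The main obstacle is ensuring nontriviality of the limiting steady state $\bar u$; this is precisely where the compactly supported subsolution obtained from $\lambda(0,R)<0$ is essential, and in turn hinges on the assumption $\mean{c}>0$ through \thm{l<<0}.
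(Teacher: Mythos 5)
Your proposal is correct, but it takes a genuinely longer route than the paper's own argument. Both proofs share the same two ingredients: (1) a bounded $c$ that equals $1$ except on a sparse family of intervals where it is very negative, so that $\mean{c}=1>0$ and hence $\lB<0$ by Theorem~\ref{thm:l<<0}, giving local repulsion via Proposition~\ref{pro:locrepulsive}; and (2) a hyperbolic-type barrier on those intervals forcing the solution to be exponentially small at their midpoints. Where you diverge is in step (2): you first build a positive \emph{stationary} solution $\bar u$ by sub/supersolution iteration (subsolution $\e\vp_R$ coming from $\lambda(0,R)<0$, supersolution $1$) and diagonal extraction in $C^2_{loc}$, and only then apply the elliptic comparison $\bar u\leq v_n$ on each $I_n$ to show $\inf_\R\bar u=0$; taking $u_0=\bar u$ then contradicts uniform repulsion. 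The paper instead bypasses the construction of $\bar u$ entirely: it picks any $0\leq u_0\leq 1$ supported in $\R_-$, notes $0\leq u\leq 1$ globally in time, and applies the \emph{parabolic} comparison principle with the time-independent supersolution $w_n(x)=e^{-(x-2^n)}+e^{x-2^n-n}$ on each interval $(2^n,2^n+n)$ (where $c\equiv-1$), concluding directly that $u(t,2^n+n/2)\leq 2e^{-n/2}$ for all $t>0$. The paper's argument is shorter and avoids the existence machinery; yours has the side benefit of producing an explicit positive bounded steady state with zero infimum, which is perhaps of independent interest. One small technical remark: after mollifying $c$ to meet the $C^{0,\alpha}_{loc}$ hypothesis, the inequality $\bar u''\geq M\bar u$ holds only on a slight shrinkage $[a_n+\delta,a_n+L_n-\delta]$ of $I_n$; your barrier must be recentred on this sub-interval (whose length still diverges), and the boundary bound $\bar u\leq 1$ on its endpoints still holds, so the argument goes through unchanged.
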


\begin{proof}
	We define $c$ by
	$$c(x)=1-2\sum_{n\in\N}\1_{[2^n,2^n+n]}(x).$$
	It is readily seen that $\mean{c}=1$ (whereas $\lm{c}=-1$). 
	Then $0$ is locally repulsive thanks to \thm{l<<0} and Table~\ref{tab:stability}.
	Let $u$ be the solution
	of~\eqref{linearc} with an initial datum $0\leq u_0\leq1$ supported on $\R_-$.
	It satisfies $0\leq u\leq 1$ for all times.
	The~function 
	$$w_n(x):=e^{-(x-2^n)}+e^{x-2^n-n}$$
	is a supersolution to~\eqref{linearc} in the interval $(2^n,2^n+n)$, which is larger
	than $1$ on the boundary, thus, by comparison, 
	$u(t,x)\leq w_n(x)$ for all $t>0$, $x\in(2^n,2^n+n)$.
	We deduce, for any $n\in\N$, that
	$$\forall t>0,\quad
	u(t,2^n+n/2)\leq 2e^{-n/2}.$$
	This implies that $0$ is not uniformly repulsive.	
\end{proof}


\section{Further properties and relations between the different notions}

The following observation is useful for some perturbations of supersolutions that
we will perform in the sequel.
\begin{lemma}\label{lem:w}
	Assume that $A,b,c,\gamma$ are bounded
	and that~\eqref{beta>0} holds.
	There exists a function $\bar w\!\in\! C^{2,\alpha}_{loc}(\ol\O)$ satisfying
	$$\inf_\O \bar w>0,\qquad
	\sup_\O\L \bar w<+\infty,\qquad
	\inf_{\partial\O}\B \bar w>0.$$	
\end{lemma}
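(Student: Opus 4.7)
The plan is to build $\bar w$ explicitly out of the distance function to $\partial\O$, which is the natural object against which to test an oblique derivative operator. Under hypothesis~\eqref{beta>0}, $\partial\O$ is uniformly of class $C^2$, hence there exists $\rho>0$ such that the distance function $d(x):=\dist(x,\partial\O)$ is well defined and of class $C^{2,\alpha}_{loc}$ on the open tubular neighbourhood
$\mc{N}_\rho:=\{x\in\O\ :\ d(x)<\rho\}$, with uniform bounds on $\nabla d$ and $D^2 d$ there (inherited from the uniform boundary regularity); moreover $|\nabla d|\equiv 1$ in $\mc{N}_\rho$ and $\nabla d=-\nu$ on $\partial\O$.

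I would then take a smooth cutoff $\chi:[0,+\infty)\to[0,1]$ with $\chi\equiv 1$ on $[0,\rho/4]$ and $\chi\equiv 0$ on $[\rho/2,+\infty)$, and define
$$\bar w(x):=1+\chi(d(x))\,e^{-Kd(x)},\qquad x\in\ol\O,$$
for a constant $K>0$ to be fixed. Since $\chi(d)\equiv 0$ on the open set $\{d>\rho/2\}$, the composition is globally $C^{2,\alpha}_{loc}(\ol\O)$ regardless of the regularity of $d$ away from the boundary, and $1\leq\bar w\leq 2$, so $\inf_\O\bar w\geq 1$. At a point of $\partial\O$ we have $\chi(0)=1$, $\chi'(0)=0$, whence $\nabla\bar w=-K\nabla d=K\nu$ and
$$\B\bar w=K\,\beta\.\nu+2\gamma\quad\text{on }\partial\O,$$
which has a positive infimum as soon as $K\inf_{\partial\O}(\beta\.\nu)>2\|\gamma\|_\infty$, using~\eqref{beta>0} and the boundedness of~$\gamma$.

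Finally, to bound $\L\bar w$ from above, I would split $\O$ in two regions. On $\O\setminus\mc{N}_{\rho/2}$ one has $\bar w\equiv 1$, hence $\L\bar w=c$, which is bounded above by hypothesis. On $\mc{N}_{\rho/2}$, writing $\bar w=1+h(d)$ with $h(s):=\chi(s)e^{-Ks}$ smooth and compactly supported in $[0,\rho/2]$, a direct computation gives
$$\L\bar w=h''(d)\,\nabla d\.A\nabla d+h'(d)\bigl[\Tr(A\,D^2 d)+b\.\nabla d\bigr]+c\,(1+h(d)),$$
which is uniformly bounded because $h',h''$ are bounded, $|\nabla d|\equiv 1$, $\|D^2 d\|_{L^\infty(\mc{N}_\rho)}<+\infty$ by the uniform $C^2$ regularity of $\partial\O$, and $A,b,c$ are bounded. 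The two regions overlap on the annular set $\{\rho/2\leq d<\rho\}\cap\mc{N}_\rho$ where the two formulas coincide, so the bound holds on all of $\O$.

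The delicate point is not the explicit computation but rather securing a \emph{uniform} tubular neighbourhood on which $d\in C^{1,1}$ with uniform bounds, which is precisely what~\eqref{beta>0} provides through the standard Lemma~14.16 of~\cite{GT}; once this input is in place, the rest of the construction is routine.
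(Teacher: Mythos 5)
Your construction is essentially the same as the paper's: both take $\bar w = 1 + (\text{decaying profile of the signed distance to }\partial\O)$, constant far from the boundary, using uniform bounds on $\nabla d$, $D^2 d$ in a tubular neighbourhood and choosing the decay rate large enough so that $\B\bar w$ dominates the $\gamma$ term. The computations of $\B\bar w$ on $\partial\O$ and of $\L\bar w$ in the collar are correct.

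One imprecision worth fixing: you attribute the $C^{2,\alpha}_{loc}$ regularity of $d$ near $\partial\O$ to \eqref{beta>0} and Lemma~14.16 of \cite{GT}, and then at the end downgrade the claim to $d\in C^{1,1}$. Neither version is quite right for the stated conclusion $\bar w\in C^{2,\alpha}_{loc}(\ol\O)$. Lemma~14.16 of \cite{GT} propagates the uniform $C^2$ regularity of $\partial\O$ to uniform $C^2$ bounds on $d$ in a collar of fixed width (enough to make $\L\bar w$ bounded), but it does \emph{not} give H\"older continuity of $D^2 d$. For $\bar w\in C^{2,\alpha}_{loc}(\ol\O)$ one must also use the paper's standing assumption that $\partial\O$ is locally $C^{2,\alpha}$ (or $C^d$), and then invoke a result such as Li--Nirenberg \cite{LiNi} to obtain $d\in C^{2,\alpha}_{loc}$ near the boundary, which is exactly the extra reference the paper's proof supplies. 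With that correction the argument is complete.
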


\begin{proof}
	We define
	$$\bar w(x):=1+\chi(k d_\O(x)),$$
	where $d_\O$ is the signed distance from $\partial\O$, positive inside $\O$,
	$\chi:\R\to\R$ is a smooth, nonnegative function supported in $[-1,1]$ such that 
	$\chi'(0)=-1$, and $k$ is a positive constant. 
	Owing to the uniform regularity of~$\partial\O$, \cite[Lemma~14.16]{GT} implies
	that~$d_\O$ is uniformly $C^2$ in a neighbourhood of $\partial\O$ of the form
	$V:=\{x\in\R^d\ :\ |d_\O(x)|<\rho\}$. Moreover, because $\partial\O$
	is locally of class $C^{2,\alpha}$, we know from~\cite{LiNi} that 
	$d_\O\in C^{2,\alpha}_{loc}(V)$.
	Then, for $k>1/\rho$, such properties hold true for $\bar w$ in the whole $\R^d$.
	On $\partial\O$ there holds $\B\bar w\geq \gamma(1+\chi(0))+k\beta\.\nu$, 
	which has a positive infimum for $k$ large enough.
\end{proof}

Lemma~\ref{lem:w} implies in particular that $\lB^p$ is well defined under the assumption
\eqref{beta>0} (as well as it is under the assumption~\eqref{gamma>0}).

\begin{proof}[Proof of \thm{relations}]
	Statement $(i)$ is an immediate consequence of the definitions.
	Let us prove the other ones.
	
	 $(ii)$.\\
	The second row of 
	Table \ref{tab:stability} yields $\mu_\B^b\geq0$ whenever $\lB^{p,b}>0$. 
	Applying this result to the operator $\L+\l$, whose \pe s are reduced by 
	$\l$ with respect to the original ones, we deduce that $\mu_\B^b\geq\l$ 
	for any $\l<\lB^{p,b}$, that is, $\mu_\B^b\geq\lB^{p,b}$.
	
	As before, in order to derive $\mu_\B^b\leq\lB$, it is sufficient to show that
	$\lB<0$ implies $\mu_\B^b\leq0$. 
	Then assume that $\lB<0$. By \thm{lrinfty}, 
	we can find $R>0$ large enough for which $\l(0,R)<0$. We now modify the zeroth order 
	coefficients of $\L$ and~$\B$ outside~$B_R$ by taking two functions $\t c$, $\t \gamma$
	satisfying
	$$\t c\leq c\inn\O,\qquad
	\t c=c\inn\O\cap B_R,\qquad \t c=\min\big\{\inf_\O c,0\big\}\inn\O\setminus B_{R+1},$$
	$$\t \gamma\geq \gamma\on\partial\O,\qquad
	\t \gamma=\gamma\on(\partial\O)\cap B_R,\qquad 
	\t \gamma=\max\big\{\sup_{\partial\O}\gamma,0\big\}\on(\partial\O)\setminus B_{R+1}.$$
	Let $\t\L$, $\t\B$ be the associated operators and let $\lambda_{\t\B}$ be the corresponding
	generalised \pe. 
	Since $\t\L$, $\t\B$ coincide with $\L$, $\B$ when restricted to $B_R$,
	\thm{lrinfty} yields $\lambda_{\t\B}<\l(0,R)<0$.
	Let $\t\vp$ be a generalised \pf\ associated with~$\lambda_{\t\B}$, provided by \thm{ef}.
	We claim that~$\t\vp$ is bounded, which may not be the case for the 
	generalised \pf s associated with $\lB$. We reclaim from the 
	construction in the proof of \thm{lrinfty} that $\t\vp$ is obtained as the locally uniform
	limit in $\ol\O$ of a sequence of $r\to+\infty$ of the \pf s~$\t\vp_r$ of the mixed problem in the 
	truncated domains $\O_r$, normalised by $\t\vp_r(0)=1$. For $r>R+1$, these functions satisfy 
	in the sets $\O_r\setminus\ol B_{R+1}$,
	\[\begin{cases}
	-(\t\L+\t\lambda(0,r))\t\vp_r=0 & \text{ in }\O_r\setminus\ol B_{R+1}\\
	\t\B \t\vp_r=0 & \text{ on } (\partial \O_r)\setminus(\ol B_{R+1}\cup\partial B_r),
	\end{cases}\]
	with $\t\lambda(0,r)<\lambda(0,R)<0$. 
	Because $\t c+\t\lambda(0,r)<0$ and 
	$\t\gamma\geq0$ outside $B_{R+1}$,
	the elliptic \MP\ and Hopf's lemma imply
	that the maximum of $\t\vp_r$ on $\ol\O_r\setminus B_{R+1}$
	is attained either on $\partial B_{R+1}$ or on $\partial B_r$, the latter case being excluded 
	because $\t\vp_r$ vanishes there.
	This shows that the family $(\t\vp_r)_{r\geq R+1}$ is globally uniformly bounded, whence
	its limit $\t\vp$ is bounded too.
	In conclusion, $\t\vp$ is bounded and satisfies
	$$\L\t\vp\geq\t\L\t\vp=-\lambda_{\t\B}\t\vp>0\inn\O,\qquad
	\B\t\vp\leq\t\B\t\vp=0\on\partial\O,$$
	which implies that $\mu_\B^b\leq 0$.
	
	$(iii)$. \\
	We need to show that $\lB\leq\mu_\B^b$.
Take $\lambda>\mu_\B^b$. By definition, there exists $\phi$ such that
 $$\phi>0 \ \text{ in }\O,\qquad\sup_\O\phi<+\infty,\qquad
 (\L+\lambda)\phi\geq0 \inn\O,
 \qquad\B\phi\leq0\on\O.$$
 The idea is to apply the Rayleigh-Ritz formula~\eqref{RR} to a suitable cutoff of $\phi$ and
 then use the growth Lemma~\ref{lem:geo} to control the cutoff term. 
  We assume without loss of generality that $0\in\O$.
 Applying Lemma~\ref{lem:geo} with $f=\phi^2$ we infer the existence, 
 for any $\e>0$, of some $m\in\N$ such~that
\Fi{phi2}
\frac{\int_{\O_{m+1}\backslash B_m} \phi^2}{\int_{\O_m}\phi^2}<\e.
\Ff
Let $r\in(m+\frac12, m+1)$ be such that \thm{lrinfty} and Proposition~\ref{pro:RR} apply.
Consider a nonnegative smooth function $\chi$ defined on $\R^d$, satisfying
 $$\supp \chi\subset B_r,\qquad
 \chi=1\ \text{ in }B_m,\qquad |\nabla \chi|\leq2\inn\R^d.$$ 
We can take $v=\phi\chi$ in the Rayleigh-Ritz formula~\eqref{RR} and infer that
%
%
 \[\begin{split}
 \l(0,r) &\leq \frac{\int_{\O_r}\left[\nabla(\phi\chi)\.A\nabla(\phi\chi)- c\phi^2\chi^2\right]}
 {\int_{\O_r}\phi^2\chi^2}\\
 &= \frac{\int_{\O_r}\left[\chi^2\nabla\phi\.A\nabla\phi+\phi^2\nabla\chi\.A\nabla\chi+
 2\phi\chi\nabla\chi\.A\nabla\phi- c\phi^2\chi^2\right]}
 {\int_{\O_r}\phi^2\chi^2}.\\
 &= \frac{\int_{\O_r}\left[\nabla(\phi\chi^2)\.A\nabla\phi+
 \phi^2\nabla\chi\.A\nabla\chi- c\phi^2\chi^2\right]}
 {\int_{\O_r}\phi^2\chi^2}.
 \end{split}\]
Applying the divergence theorem (recall that $\O_r$ is a Lipschitz open set
 for any~$r$ for which \thm{lrinfty} applies) we get
 \[\begin{split}
 \l(0,r) &\leq \frac{\int_{\O_r}\left[(-\L\phi)\phi\chi^2
 +\phi^2\nabla\chi\.A\nabla\chi\right]}
 {\int_{\O_r}\phi^2\chi^2}\\
 &\leq \lambda+\frac{ \int_{\O_r}\phi^2\nabla\chi\.A\nabla\chi} {\int_{\O_r}\phi^2\chi^2}.
 \end{split}\]
 Since $\chi=1$ in $B_m$ and $|\nabla \chi|\leq2$, there exists $C>0$ depending on
the $L^\infty$ norm of~$A$, such that
 $$\l(0,r)\leq \lambda+
 C\frac{\int_{\O\cap(B_r\backslash B_m)} \phi^2}{\int_{\O_r\cap B_m}\phi^2}.$$
Recalling that $m<r<m+1$, we eventually deduce from~\eqref{phi2} that
$\l(0,r)\leq \lambda+C\e$, whence $\lB<\lambda+C\e$ by \thm{lrinfty}.
We have derived this inequality for arbitrary 
$\l>\mu_\B^b$ and $\e>0$, with $C$ independent of both.
The inequality $\lB\leq\mu_\B^b$ is thereby proved.

$(iv)$. \\
	Assume by contradiction that $\lB^{p}>\mu_\B^b$. Then, for given 
	$\mu_\B^b<\l<\l'<\lB^{p}$, there exist two positive functions $\phi,\psi$ satisfying 
	$$\sup_\O\phi<+\infty,\qquad (\L+\lambda)\phi\geq0 \text{ in }
	\O,\qquad \B\phi\leq0\text{ on }\partial \O,$$
	$$\inf_\O\psi>0,\qquad (\L+\lambda')\psi\leq0 \text{ in }
	\O,\qquad \B\psi\geq0\text{ on }\partial \O.$$	
	We set $\t\psi:=\psi+\e\bar w$, where~$\bar w$ is the function provided by 
	Lemma~\ref{lem:w} in the case of hypothesis~\eqref{beta>0}, 
	while~$\bar w\equiv1$ if~\eqref{gamma>0} holds. Then, for $\e>0$ small 
	enough, $\t\psi$ fulfils the same conditions as $\psi$, with a possibly
	smaller $\l'$ still larger than $\l$, together~with 
	\Fi{Bphi>0}
	\inf_{\partial\O}\B\t\psi>0.
	\Ff	
	We can now proceed as in the proof of \cite[Theorem~4.2]{BR4}.
	Consider a smooth positive function $\chi:\R^d\to\R$	such that
	$\chi(x)\to+\infty$ as $|x|\to\infty$ and $\nabla\chi,D^2\chi\in L^\infty(\R^d)$.
	We define $\psi_n:=\t\psi+\frac1n\chi$ and call 
	$$k_n:=\max_{\ol\O}\frac \phi{\psi_n}.$$
	Observe that such maximum exists, it is positive, and the sequence $\seq{k}$
	is increasing and bounded from above by $\sup \phi/\inf\t\psi$, thus it is convergent.
	Let $x_n\in\ol\O$ be a point where the maximum $k_n$ is attained.
	The function $k_n\psi_n$ touches $\phi$ from above at $x_n$.
	In order to estimate the perturbation term $\frac1n\chi(x_n)$, we observe that
	$$\frac1{k_{2n}}\leq\frac{\psi_{2n}(x_n)}{\phi(x_n)}=\frac{\psi_{n}(x_n)}{\phi(x_n)}
	-\frac{1}{2n}\frac{\chi(x_n)}{\phi(x_n)}\leq \frac1{k_n}
	-\frac{1}{2n}\frac{\chi(x_n)}{\sup \phi}.$$
	Then the convergence of $\seq{k}$ implies that $\frac1n\chi(x_n)\to0$
	as $n\to\infty$. By uniform continuity, $\frac1n\chi(x)\to0$ as $n\to\infty$
	uniformly in $x\in B_\rho(x_n)$, for any $\rho>0$.
	
	Next, we see that
	$$(\L+\lambda)\psi_n\leq(\lambda-\l')\t\psi+\frac Cn(\chi+1)\inn\O,$$
	where $C$ is a constant only depending on $d$, $\lambda$ and the $L^\infty$ norms of 
	$A,b,c$,$\nabla\chi,D^2\chi$.
	It~follows that $(\L+\l)\psi_n<0$ in a neighbourhood of $x_n$ 
	for $n$ sufficiently large. Then, $x_n$ cannot lie inside $\O$ for such values of $n$,
	because otherwise we would have that the strict supersolution
	$k_n\psi_n$ touches the subsolution $\phi$ from above at $x_n$, contradicting
	the \SMP. Hence $x_n\in\partial\O$ for~$n$ large enough.
	We compute 
	$$\B\psi_n(x_n)\geq\B\t\psi -\frac Cn\big(\chi(x_n)+1\big),$$ 
	with $C$ now depending on the $L^\infty$ norms of 
	$\beta,\gamma$,$\nabla\chi$.
	For $n$ large, the above term is 
	strictly positive due to~\eqref{Bphi>0} and therefore 
	$$\beta\.\nabla(k_n\psi_n-\phi)(x_n)=\B(k_n\psi_n-\phi)(x_n)>0.$$
	This implies that $k_n\psi_n<\phi$ somewhere, 
	contradicting the definition of~$k_n$. 
\end{proof}

\begin{remark}\label{rem:relax}
	One can check looking at the proof of \thm{relations}
	that the boundedness of the coefficients can be relaxed: no assumption is required
	(besides the standing ones of Section~\ref{sec:main}) for $(i)$ as well as for the first 
	inequality
	in $(ii)$; the second inequality in $(ii)$ only requires
	$\inf_\O c>-\infty$, $\sup_{\partial\O}\gamma<+\infty$, while $(iii)$ requires these 
	same conditions plus the boundedness of the largest
	ellipticity constant of~$A$; $(iv)$ requires  
	$\sup_\O c<+\infty$, $A,b$ bounded, $\inf_{\partial\O}\gamma>-\infty$ (to apply 
	Lemma~\ref{lem:w}), $\beta$ bounded, and
	the local $C^d$ regularity of $\partial\O$ can be relaxed to $C^1$.
\end{remark}


\appendix

\setcounter{theorem}{0}
\section{The Rayleigh-Ritz formula}

\begin{proof}[Proof of Proposition~\ref{pro:RR}]
	We assume without loss of generality  that $y=0$.
	\thm{lrinfty} provides us with a set of radii $\mc{R}$ with zero measure complement in 
	$\R_+$
	on which $r\mapsto\lambda(0,r)$ is well defined and monotone. 
	The semicontinuous extension of this function 
	is continuous on some set $\mc{R}'$, with $\R_+\setminus\mc{R}'$ at most countable.
	
	Take~$r\in\mc{R}\cap\mc{R}'$.
	Consider the energy functional 
	$$\mc{E}[v]:=\int_{\O_r}(\nabla v\.A\nabla v-c v^2)+
	\int_{B_r\cap \partial \O_r}\gamma v^2$$
	(the second integral is a surface integral).
	We want to minimise $\mc{E}$ over the set
	$$\mc{H}:=\{v\in H^1(\O_r)\ :\ \tr v=0\text{ on }\partial\O_r\cap
	\partial B_r,\ \|v\|_{L^2(\O_r)}=1\}.$$
	Observe that $\mc{E}$ is well defined on $\mc{H}$ 
	because the trace operator is continuous from
	$H^1(\O_r)$ to $L^2(\partial\O_r)$
	(recall that $\partial\O_r$ is Lipschitz whenever \thm{lrinfty} applies). 
	Actually, owing to~\cite[Theorem 3.37]{McL}, it is continuous from  
	$H^{1/2+s}(\O_r)$ to $H^{s}(\partial\O_r)\subset L^2(\partial\O_r)$ for $0<s<1/2$, 
	whence by 
	the interpolation inequality for $H^{1/2+s}(\O_r)$ in terms of $H^1(\O_r)$ and $L^2(\O_r)$,
	for any $\e>0$ we find a constant $C_\e>0$ such that
	$$
	\mc{E}[v]\geq\inf_{\O_r}\ul A\|\nabla v\|_{L^2(\O_r)}^2-\|c\|_\infty\|  v\|_{L^2(\O_r)}^2
	-\|\gamma\|_\infty(\e\|\nabla v\|_{L^2(\O_r)}^2+C_\e\| v\|_{L^2(\O_r)}^2),$$ 
	where $\ul A$ is the smallest ellipticity constant of $A$. Choosing $\e$ small enough yields
	\Fi{E>}
	\forall v\in H^1\!(\O_r),\quad
	\mc{E}[v]\geq \frac12\inf_{\O_r}\ul A
	\|\nabla v\|_{L^2(\O_r)}^2-\big(\|c\|_\infty+C_\e\|\gamma\|_\infty\big)
	\|  v\|_{L^2(\O_r)}^2.
	\Ff
	This means that $\mc{E}$ is bounded from below in $\mc{H}$. We set
	$$k:=\inf_{\mc{H}}\mc{E},$$
	which coincides with the right-hand side of~\eqref{RR}.
	We need to show that $k$ is attained and that
	$k=\l(0,r)$.
	
	Let $\seq{v}$ be a minimising sequence for $\mc{E}$ on $\mc{H}$. We know from~\eqref{E>} that
	$\seq{v}$ is bounded in $H^1(\O_r)$
	and therefore it converges (up to subsequences)
	in $L^2(\O_r)$ to some function~$\psi$ satisfying $\|\psi\|_{L^2(\O_r)}=1$.
	Furthermore, 	
	being $\mc{E}$ quadratic, the parallelogram law yields, for $j,l\in\N$,
	$$\mc{E}\bigg[\frac{v_j-v_l}2\bigg]= \frac12\mc{E}[v_j]+
	\frac12\mc{E}[v_l]-\mc{E}\bigg[\frac{v_j+v_l}2\bigg]\leq
	\frac12\mc{E}[v_j]+\frac12\mc{E}[v_l]-\frac k4\|v_j+v_l\|_{L^2(\O_r)}^2,$$
	which tends to $0$ as $j,l\to\infty$. It then follows from~\eqref{E>}
	that $\seq{v}$ is a Cauchy sequence in $H^1(\O_r)$, whence
	$\psi\in \mc{H}$ and satisfies $\mc{E}[\psi]=k=\min_{\mc{H}}\mc{E}$. 
	
	Let us show that $k\leq\l(0,r)$.
	Consider a \pf\ $\vp$ associated with $\l(0,r)$, given by \thm{lrinfty}.
	Since we do not know if $\vp$ belongs to $H^1(\O_r)$, we perform a truncation.
	For $\e>0$, define $v_\e:=\max\{\vp-\e,0\}$, that is, 
	the positive part of $\vp-\e$. 
	Because $\vp$ is regular in 
	$\ol\O_{r}\setminus\partial B_{r}$, continuous in $\ol\O_r$ and
	vanishes on~$\partial B_{r}$, the function
	$v_\e$ is equal to $0$ in a neighbourhood of $\partial B_r$, hence it 
	is in $H^1(\O_r)$ with null trace on $\partial\O_r\cap\partial B_r$.
	In order to compute
	$$\mc{E}[v_\e]=\int_{\O_r}(\nabla v_\e\.A\nabla v_\e-c v_\e^2)+
	\int_{B_r\cap \partial \O_r}\gamma v_\e^2,$$
	we observe that
	$\nabla v_\e$ is (a.e.) equal to $\nabla\vp$ where $\vp>0$ and $0$
	otherwise,~whence
	$$\nabla v_\e\.A\nabla v_\e=\nabla v_\e\.A\nabla \vp=
	\nabla\.(v_\e A\nabla\vp)-v_\e\nabla\. A\nabla\vp.$$
	We can apply the divergence theorem in the Lipschitz domain $\O_r$
	to the function $v_\e A\nabla\vp$, which belongs to $H^1(\O_r)$ and has
	null trace on $\partial\O_r\cap\partial B_r$. Hence, 
	using the equation and the boundary condition satisfied by $\vp$,
	we find that
	$$\mc{E}[v_\e]=\int_{\O_r}\Big(v_\e(c+\lambda(0,r))\vp-c v_\e^2\Big)+
	\int_{B_r\cap \partial \O_r}\gamma v_\e (v_\e-\vp).$$
	By the dominated convergence theorem, 
	we eventually infer that 
	$$\lim_{\e\to0}\mc{E}\bigg[\frac{v_\e}{\|v_\e\|_{L^2(\O_r)}}\bigg]
	=\frac1{\|\vp\|_{L^2(\O_r)}^2}\lim_{\e\to0}\mc{E}[v_\e]=\lambda(0,r),$$
	whence $k\leq\l(0,r)$.

	To prove the reverse inequality, take $\t r\in\mc{R}$ 
	such that $\t r>r$ and let $\t\vp$ be an associated \pf.
	Multiplying the eigenvalue equation for $\t\vp$ by the function
	$\psi^2/\t\vp$, 
	which belongs to $H^1(\O_{r})$ and has
	null trace on $\partial\O_r\cap\partial B_r$,
	and integrating over $\O_r$, we get 	
	$$\lambda(0,\t r)\int_{\O_r}\psi^2=\int_{\O_r}\bigg(-
	\frac{\psi^2}{\t\vp}
	\nabla\.(A\nabla\t\vp)-c\psi^2\bigg).$$
	We integrate by parts the first term of the right-hand side 
	and obtain
	\[\begin{split}
	\lambda(0,\t r) &=\int_{\O_r}\bigg(
	2\frac{\psi}{\t\vp}\nabla\psi\.A\nabla\t\vp
	-\frac{\psi^2}{\t\vp^2}\nabla\t\vp\.A\nabla\t\vp-c\psi^2\bigg)
	+\int_{B_r\cap \partial \O_r}\gamma \psi^2\\
	&\leq \int_{\O_r}\big(\nabla\psi\.A\nabla\psi-c\psi^2\big)+
	\int_{B_r\cap \partial \O_r}\gamma \psi^2.
	\end{split}\]
	The right-hand side is precisely $\mc{E}[\psi]=k$.
	We have thereby shown that $\lambda(0,\t r)\leq k$
	for any $\mc{R}\ni \t r>r$,
	whence $\lambda(0,r)\leq k$
	because $r$ is in the continuity set $\mc{R'}$ of $\lambda(0,\.)$.
	
	Finally, the Rayleigh-Ritz formula for $\lB$ easily follows from the one for $\lambda(0,r)$
	owing to \thm{lrinfty}.	
\end{proof}


\section*{Acknowledgements}
This work has been supported by the ERC Advanced Grant
2013 n.~321186 ``ReaDi"
held by H.~Berestycki and by the French National Research Agency (ANR), within project NONLOCAL ANR-14-CE25-0013.
The author is grateful to N.~Nadirashvili for the suggestion of some of the questions 
investigated in this manuscript.


\tolerance = 1500 
\hoffset = .3cm
\voffset = -.6cm
 
\textwidth = 15.6cm
\textheight = 24cm
\topmargin = 0pt
\headheight = 20pt
\oddsidemargin = 0pt
\evensidemargin = 0pt
\marginparwidth = 10pt
\marginparsep = 10pt


\begin{thebibliography}{10}
	
	\bibitem{A1}
	S.~Agmon.
	\newblock On positivity and decay of solutions of second order elliptic
	equations on {R}iemannian manifolds.
	\newblock In {\em Methods of functional analysis and theory of elliptic
		equations (Naples, 1982)}, pages 19--52. Liguori, Naples, 1983.
	
	\bibitem{C11}
	H.~Aikawa, T.~Kilpel\"{a}inen, N.~Shanmugalingam, and X.~Zhong.
	\newblock Boundary {H}arnack principle for {$p$}-harmonic functions in smooth
	{E}uclidean domains.
	\newblock {\em Potential Anal.}, 26(3):281--301, 2007.
	
	\bibitem{AW}
	D.~G. Aronson and H.~F. Weinberger.
	\newblock Multidimensional nonlinear diffusion arising in population genetics.
	\newblock {\em Adv. in Math.}, 30(1):33--76, 1978.
	
	\bibitem{BHNperiodic}
	H.~Berestycki, F.~Hamel, and N.~Nadirashvili.
	\newblock The speed of propagation for {KPP} type problems. {I}. {P}eriodic
	framework.
	\newblock {\em J. Eur. Math. Soc. (JEMS)}, 7(2):173--213, 2005.
	
	\bibitem{BHNgeneral}
	H.~Berestycki, F.~Hamel, and N.~Nadirashvili.
	\newblock The speed of propagation for {KPP} type problems. {II}. {G}eneral
	domains.
	\newblock {\em J. Amer. Math. Soc.}, 23(1):1--34, 2010.
	
	\bibitem{BHR1}
	H.~Berestycki, F.~Hamel, and L.~Roques.
	\newblock Analysis of the periodically fragmented environment model. {I}.
	{S}pecies persistence.
	\newblock {\em J. Math. Biol.}, 51(1):75--113, 2005.
	
	\bibitem{BHRossi}
	H.~Berestycki, F.~Hamel, and L.~Rossi.
	\newblock Liouville-type results for semilinear elliptic equations in unbounded
	domains.
	\newblock {\em Ann. Mat. Pura Appl. (4)}, 186(3):469--507, 2007.
	
	\bibitem{BNV}
	H.~Berestycki, L.~Nirenberg, and S.~R.~S. Varadhan.
	\newblock The principal eigenvalue and maximum principle for second-order
	elliptic operators in general domains.
	\newblock {\em Comm. Pure Appl. Math.}, 47(1):47--92, 1994.
	
	\bibitem{BR2}
	H.~Berestycki and L.~Rossi.
	\newblock Reaction-diffusion equations for population dynamics with forced
	speed. {I}. {T}he case of the whole space.
	\newblock {\em Discrete Contin. Dyn. Syst.}, 21(1):41--67, 2008.
	
	\bibitem{BR4}
	H.~Berestycki and L.~Rossi.
	\newblock Generalizations and properties of the principal eigenvalue of
	elliptic operators in unbounded domains.
	\newblock {\em Comm. Pure Appl. Math.}, 68(6):1014--1065, 2015.
	
	\bibitem{Bir95}
	I.~Birindelli.
	\newblock Hopf's lemma and anti-maximum principle in general domains.
	\newblock {\em J. Differential Equations}, 119(2):450--472, 1995.
	
	\bibitem{BD06}
	I.~Birindelli and F.~Demengel.
	\newblock First eigenvalue and maximum principle for fully nonlinear singular
	operators.
	\newblock {\em Adv. Differential Equations}, 11(1):91--119, 2006.
	
	\bibitem{Holes}
	R.~Ducasse and L.~Rossi.
	\newblock Blocking and invasion for reaction-diffusion equations in periodic
	media.
	\newblock {\em Calc. Var. Partial Differential Equations}, 57(5):Art. 142, 39,
	2018.
	
	\bibitem{Fisher}
	R.~A. Fisher.
	\newblock The wave of advantage of advantageous genes.
	\newblock {\em Ann. Eugenics}, 7:355--369, 1937.
	
	\bibitem{FG}
	J.~G\"artner and M.~I. Fre{\u\i}dlin.
	\newblock The propagation of concentration waves in periodic and random media.
	\newblock {\em Dokl. Akad. Nauk SSSR}, 249(3):521--525, 1979.
	
	\bibitem{GT}
	D.~Gilbarg and N.~S. Trudinger.
	\newblock {\em Elliptic partial differential equations of second order}, volume
	224 of {\em Grundlehren der Mathematischen Wissenschaften [Fundamental
		Principles of Mathematical Sciences]}.
	\newblock Springer-Verlag, Berlin, second edition, 1983.
	
	\bibitem{KPP}
	A.~N. Kolmogorov, I.~G. Petrovski\u{\i}, and N.~S. Piskunov.
	\newblock {\'E}tude de l'{\'e}quation de la diffusion avec croissance de la
	quantit{\'e} de mati{\`e}re et son application {\`a} un probl{\`e}me
	biologique.
	\newblock {\em Bull. Univ. Etat. Moscow Ser. Internat. Math. Mec. Sect. A},
	1:1--26, 1937.
	
	\bibitem{LiNi}
	Y.~Li and L.~Nirenberg.
	\newblock The distance function to the boundary, {F}insler geometry, and the
	singular set of viscosity solutions of some {H}amilton-{J}acobi equations.
	\newblock {\em Comm. Pure Appl. Math.}, 58(1):85--146, 2005.
	
	\bibitem{Lie86}
	G.~M. Lieberman.
	\newblock Mixed boundary value problems for elliptic and parabolic differential
	equations of second order.
	\newblock {\em J. Math. Anal. Appl.}, 113(2):422--440, 1986.
	
	\bibitem{Lie87}
	G.~M. Lieberman.
	\newblock Local estimates for subsolutions and supersolutions of oblique
	derivative problems for general second order elliptic equations.
	\newblock {\em Trans. Amer. Math. Soc.}, 304(1):343--353, 1987.
	
	\bibitem{Lie89}
	G.~M. Lieberman.
	\newblock Optimal {H}\"{o}lder regularity for mixed boundary value problems.
	\newblock {\em J. Math. Anal. Appl.}, 143(2):572--586, 1989.
	
	\bibitem{Lie}
	G.~M. Lieberman.
	\newblock {\em Second order parabolic differential equations}.
	\newblock World Scientific Publishing Co., Inc., River Edge, NJ, 1996.
	
	\bibitem{Lie01}
	G.~M. Lieberman.
	\newblock Pointwise estimates for oblique derivative problems in nonsmooth
	domains.
	\newblock {\em J. Differential Equations}, 173(1):178--211, 2001.
	
	\bibitem{McL}
	W.~McLean.
	\newblock {\em Strongly elliptic systems and boundary integral equations}.
	\newblock Cambridge University Press, Cambridge, 2000.
	
	\bibitem{Miller}
	K.~Miller.
	\newblock Barriers on cones for uniformly elliptic operators.
	\newblock {\em Ann. Mat. Pura Appl. (4)}, 76:93--105, 1967.
	
	\bibitem{Morse-Sard}
	A.~P. Morse.
	\newblock The behavior of a function on its critical set.
	\newblock {\em Ann. of Math. (2)}, 40(1):62--70, 1939.
	
	\bibitem{NR1}
	G.~Nadin and L.~Rossi.
	\newblock Propagation phenomena for time heterogeneous {KPP} reaction-diffusion
	equations.
	\newblock {\em J. Math. Pures Appl. (9)}, 98(6):633--653, 2012.
	
	\bibitem{NX09}
	J.~Nolen and J.~Xin.
	\newblock Asymptotic spreading of {KPP} reactive fronts in incompressible
	space-time random flows.
	\newblock {\em Ann. Inst. H. Poincar\'e Anal. Non Lin\'eaire}, 26(3):815--839,
	2009.
	
	\bibitem{Nus81}
	R.~D. Nussbaum.
	\newblock Eigenvectors of nonlinear positive operators and the linear
	{K}re\u{\i}n-{R}utman theorem.
	\newblock In {\em Fixed point theory ({S}herbrooke, {Q}ue., 1980)}, volume 886
	of {\em Lecture Notes in Math.}, pages 309--330. Springer, Berlin-New York,
	1981.
	
	\bibitem{NP}
	R.~D. Nussbaum and Y.~Pinchover.
	\newblock On variational principles for the generalized principal eigenvalue of
	second order elliptic operators and some applications.
	\newblock {\em J. Anal. Math.}, 59:161--177, 1992.
	\newblock Festschrift on the occasion of the 70th birthday of Shmuel Agmon.
	
	\bibitem{Patri08}
	S.~Patrizi.
	\newblock The {N}eumann problem for singular fully nonlinear operators.
	\newblock {\em J. Math. Pures Appl. (9)}, 90(3):286--311, 2008.
	
	\bibitem{P-S02}
	Y.~Pinchover and T.~Saadon.
	\newblock On positivity of solutions of degenerate boundary value problems for
	second-order elliptic equations.
	\newblock {\em Israel J. Math.}, 132:125--168, 2002.
	
	\bibitem{Max2}
	M.~H. Protter and H.~F. Weinberger.
	\newblock On the spectrum of general second order operators.
	\newblock {\em Bull. Amer. Math. Soc.}, 72:251--255, 1966.
	
	\bibitem{Landis}
	L.~Rossi.
	\newblock The {L}andis conjecture with sharp rate of decay.
	\newblock {\em To appear on Indiana Univ. Math. J.}, 2019.
	
	\bibitem{Stamp}
	G.~Stampacchia.
	\newblock Problemi al contorno ellittici, con dati discontinui, dotati di
	soluzioni h\"olderiane.
	\newblock {\em Ann. Mat. Pura Appl. (4)}, 51:1--37, 1960.
	
	\bibitem{W02}
	H.~F. Weinberger.
	\newblock On spreading speeds and traveling waves for growth and migration
	models in a periodic habitat.
	\newblock {\em J. Math. Biol.}, 45(6):511--548, 2002.
	
\end{thebibliography}

\def\cprime{$'$} \def\polhk#1{\setbox0=\hbox{#1}{\ooalign{\hidewidth
			\lower1.5ex\hbox{`}\hidewidth\crcr\unhbox0}}}
\def\cfac#1{\ifmmode\setbox7\hbox{$\accent"5E#1$}\else
	\setbox7\hbox{\accent"5E#1}\penalty 10000\relax\fi\raise 1\ht7
	\hbox{\lower1.15ex\hbox to 1\wd7{\hss\accent"13\hss}}\penalty 10000
	\hskip-1\wd7\penalty 10000\box7}



\end{document}